\newcommand{\red}[1]{{\color{red}#1}}
\newcommand{\bmu}{\bbmu}
\newcommand{\z}{\mathbf{z}}
\DeclareMathOperator{\gen}{gen}
\DeclareMathOperator{\Av}{Av}
\DeclareMathOperator{\iw}{Iw}
\newcommand{\ve}{\mathbf{v}}
\newcommand{\bv}{b^{\vee}}
\newtheorem*{nthm}{Theorem}
\newtheorem*{nprop}{Proposition}
\newtheorem*{nlem}{Lemma}
\newtheorem*{ncor}{Corollary}
\newtheorem*{nclaim}{Claim}
\newtheorem*{nrem}{Remark} 
\theoremstyle{remark}
\newtheorem*{nrems}{Remark}
\newcommand{\be}[1]{\begin{eqnarray} \label{#1}}
\newcommand{\ee}{\end{eqnarray}}
\newcommand{\tpoint}[1]{\vspace{2mm}\par \noindent \refstepcounter{subsection}{\bf \thesubsection.}
  {\em #1.} }
\newcommand{\spoint}{\vspace{2mm}\par \noindent \refstepcounter{subsection}{\bf \thesubsection.} }
\numberwithin{equation}{section}
\newcommand{\sph}{\mathbf{1}_K}
\newcommand{\xv}{\xi^{\vee}}
\newcommand{\gv}{\gamma^{\vee}}
\newcommand{\av}{a^{\vee}}
\newcommand{\aw}{\mathbb{W}}
\newcommand{\up}{U}
\newcommand{\um}{U^-}
\newcommand{\valu}{\mathsf{val}}
\newcommand{\la}{\langle}
\newcommand{\ra}{\rangle}
\newcommand{\lv}{\Lambda^{\vee}}
\newcommand{\mv}{\mu^{\vee}}
\newcommand{\mf}[1]{\mathfrak{#1}}
\newcommand{\f}[1]{\mathfrak{#1}}
\newcommand{\rr}{\rightarrow}
\newcommand{\mc}[1]{\mathcal{#1}}
\newcommand{\Ps}{\theta}
\newcommand{\Lv}{\Lambda^{\vee}}
\renewcommand{\lv}{\lambda^{\vee}}
\newcommand{\tauv}{\tau^{\vee}}
\newcommand{\zee} {\mathbb{Z}}
\newcommand{\C} {\mathbb{C}}
\renewcommand{\O}{\mathcal{O}}
\newcommand{\niw}{\textbf{n}^-}
\renewcommand{\c}{\mathbf{c}}
\renewcommand{\b}{\mathbf{b}}
\newcommand{\B}{\mathsf{B}}
\newcommand{\W}{\mathcal{W}}
\newcommand{\resi}{\mathsf{res}}
\newcommand{\ZZ}{\mathbb{Z}}
\newcommand{\kk}{\kappa}
\begin{document}

\title{On Iwahori-Whittaker Functions for Metaplectic Groups}
\author{Manish Patnaik \&  Anna  Pusk\'{a}s}
\date{\today}
\address{University of Alberta \\ Department of Mathematical and Statistical Sciences, CAB 632 \\ Edmonton, Alberta T6G 2G1}
\email{patnaik@ualberta.ca, puskas@ualberta.ca}

\newcommand{\tA}{\widetilde{A}}
\newcommand{\Pshi}{\widetilde{\Phi}}
\newcommand{\tG}{\widetilde{G}}
\newcommand{\tK}{\widetilde{K}}
\newcommand{\tU}{\widetilde{U}}
\newcommand{\an}{\mathbf{a}}
\newcommand{\nn}{\mathbf{n}^-}
\newcommand{\tT}{\widetilde{\mathbf{T}}}
\newcommand{\tB}{\widetilde{B}}
\newcommand{\tGg}{\tG^{gen}}

\newcommand{\un}{\mf{u}}
\newcommand{\mq}{\mathbf{\mu}_{q-1}}
\newcommand{\Q}{\mathsf{Q}}
\newcommand{\m}{\mathsf{n}}
\newcommand{\tH}{\widetilde{H}}

\newcommand{\g}{\mathbf{g}}

\newcommand{\T}{\mathbf{T}}
\newcommand{\spw}{\widetilde{\mathbf{1}}_{\psi}}
\newcommand{\hs}{\widetilde{\mathbf{1}}}

\newcommand{\tve}{\widetilde{\ve}}

\newcommand{\sx}{\mathsf{x}}
\newcommand{\sh}{\mathsf{h}}
\newcommand{\sw}{\mathsf{w}}
\newcommand{\bh}{\mathbf{h}}
\newcommand{\bx}{\mathbf{x}}
\newcommand{\bw}{\mathbf{w}}
\newcommand{\bE}{\mathcal{E}}
\newcommand{\bA}{\mathbf{A}}
\renewcommand{\sc}{\mathsf{c}}

\newcommand{\tI}{\widetilde{I}}
\newcommand{\tIm}{\widetilde{I}^-}

\newcommand{\mult}{S}
\newcommand{\ring}{\C_{v,\mult}[\Lv]}
\newcommand{\nice}{\C_{v,\mult}[\Lv _0]}
\newcommand{\nicer}{\C_{v}(\widetilde{\Lambda}^{\vee})}
\newcommand{\nuv}{{\nu^{\vee}}}
\newcommand{\nicetwist}{\nicer [\la s, t \ra_\star]}

\maketitle

\begin{abstract} We relate Iwahori-Whittaker functions on metaplectic covers to certain Demazure-Lusztig operators, the latter of which are built from a Weyl group action previously considered by G. Chinta and P. Gunnells. Using a certain combinatorial identity for the sum of these Demazure-Lusztig operators, we obtain an analogue of the Casselman-Shalika formula for spherical Whittaker functions in this context.  \end{abstract}

\tableofcontents

\section{Introduction}

\spoint For a positive integer $n$, let $\tG$ be the $n$-fold metaplectic cover of a split, simple, simply-laced group $G$ over a non-archimedean local field. The aim of this note is to make a direct link between (unramified) Whittaker functions on $\tG$ and certain metaplectic Demazure-Lusztig operators that were recently introduced by the second named author together with G. Chinta and P. Gunnells \cite{cgp}. Namely, the metaplectic Whittaker function can be decomposed into pieces, the so-called Iwahori-Whittaker functions, and our main result (Theorem \ref{main-rec}) is that each of these pieces coincides with the application of a metaplectic Demazure-Lusztig operator. As the entire Whittaker function is a sum of Iwahori-Whittaker functions, it can also be expressed as a sum of Demazure-Lusztig operators.  A combinatorial identity for the same sum of operators was obtained in \cite[Theorem 4]{cgp}; combining this identity with our main result yields the Casselman-Shalika formula in this metaplectic setting. Using different techniques, this formula was obtained earlier by G. Chinta and O. Offen in \cite{ch:of} for $\mathrm{GL}_r,$ and for more general groups by P. McNamara in \cite{mac-met}. We also compute the metaplectic spherical function using similar ideas (see Appendix \ref{app-sph}). As far as we know, this formula has not appeared in the literature except in the case of $GL_r$ (see \cite{ch:of}).

\spoint What appears to be new here is the connection between a general metaplectic Iwahori-Whittaker functions and metaplectic Demazure-Lusztig operators (see Theorem \ref{main-rec} or \eqref{W:T-1}). Both these sets of objects are indexed by the Weyl group, and previously this connection was only known for the long element in the Weyl group and that too for a somewhat indirect reason. Note that in the finite dimensional setting (i.e. for root systems of finite type) just knowing this connection for the long word is essentially equivalent to the Casselman-Shalika formula. On the other hand, for a general Kac-Moody group, where there is no longest element in the Weyl group, a connection between arbitrary Iwahori-Whittaker functions and Demazure-Lusztig operators is a necessary ingredient in our strategy to derive the  Casselman-Shalika formula.   The techniques employed here to establish our main Theorem \ref{main-rec} are similar to those of \cite{pat-whit}, and this Theorem can be generalized to metaplectic covers of Kac-Moody groups (note that these groups have not yet appeared in the literature).  However, to obtain the final Casselman-Shalika formula in this new context (i.e., the analogue of Corollary \ref{main-cs-fin})) further complications entirely of a Kac-Moody nature need to be addressed. It was our aim in writing this note to first deal with the purely metaplectic issues, leaving the the Kac-Moody ones to a future work.

\spoint We make a few comments about the techniques of this paper. As we said above, the main recursion result is derived using a strategy analogous to the one in \cite{pat-whit}, which was written in the non-metaplectic (though affine) setting. The essential differences that arise in adapting this strategy to the metaplectic context can already be perceived in the rank one situation, and so we carry out this computation for $\mathrm{SL}_2$ in some detail in \S \ref{rk1case}. 

In the non-metaplectic case, checking the braid relations for the Demazure-Lusztig operators is but a simple computation (see \cite[(8.3)]{lus-K}). In the metaplectic case, the situation is complicated by the involved nature of the Chinta-Gunnells Weyl group action out of which these operators are built. The original proofs of these braid relations were computational in nature-- they proceeded by first checking algebraically (see \cite{ch:gu}) that the Chinta-Gunnells formulas define a Weyl group action, and then verifying the braid relations for the Demazure-Lusztig operators via a further computation (see \cite{cgp}). From our work, the braid relations for both the Chinta-Gunnells formulas as well as the Demazure-Lusztig operators essentially follow. For the Demazure-Lusztig operators, using an idea very similar to one used in \cite{bbl}, the braid relations (at least for the application to dominant coweights) follow from our main recursion result as we explain in Remark \ref{rmk:DL-braid}. For the braid relations satisfied by the Chinta-Gunnells formulas, we actually offer two proofs: the first approach, explained in \S \ref{braid-cg} and similar in spirit to the argument in McNamara \cite{mac-met}, makes use of an "intertwiner" interpretation (see \eqref{Ia:cg}) of the Chinta-Gunnells formulas. The  second approach is explained in Appendix \ref{app:braid} and gives a bit more, namely the braid relations for Demazure-Lusztig operators applied to all coweights (not just dominant ones). This proof uses some facts about rank two root systems, a reduction to the non-metaplectic setting, as well as our main recursion result .

In appendix \ref{app-sph}, we compute the metaplectic spherical function following \cite[\S7]{bkp} closely.  The computation is carried out using the same stragegy as in the Whittaker case, i.e. reducing to the Iwahori-case, which can then be expressed using some Demazure-Lusztig type operators. Actually, the arguments are simpler as the relevant Demazure-Lusztig operators in the spherical case are built from the ordinary Weyl group action rather than the Chinta-Gunnells action. Aside from illustrating the similarity to the Whittaker case, our interest in the spherical case is due to the fact that in the metaplectic, \emph{affine} setting the only method known to us which obtains the full (i.e., not just up to a Weyl group invariant constant) Casselman-Shalika formula goes through the spherical function (see Remark \ref{whit:spy} for more on this point).

\tpoint{Relation to previous work} The connection between Iwahori-Whittaker functions and Demazure-Lusztig operators was first noticed in the nonmetaplectic setting by B. Brubaker, D. Bump, and A. Licata \cite{bbl}. Our Theorem \ref{main-rec} is a metaplectic analogue of \cite[Theorem 1]{bbl}, though we note that our technique, if adapted to non-metaplectic setting, is actually not the same as the one in \emph{loc. cit} as we do not invoke here any uniqueness principle for Whittaker functionals. 

In this paper, we define the Whittaker functions in terms of certain explicit integrals and essentially avoid the usual representation theoretical framework for dealing with such functions. We do this partly with an eye toward the Kac-Moody setting where, although such representation theoretical constructions are not yet defined, the constructions of this paper can still be carried out. To connect however with the usual notions (i.e. Whittaker functionals on principal series representations), we make some remarks in \S \ref{s:whit-fun}. Also, in \S \ref{s:kaz-pat} we include some remarks which connect our main recursion result (and hence the Chinta-Gunnells action) with the earlier work of Kazhdan and Patterson \cite{ka:pat} on the composition of Whittaker functionals and intertwining operators. This connection is certainly not new, and a similar type of relation to the work of \cite{ka:pat} was one of the sources of motivation for the introduction by Chinta and Gunnells \cite[p.6]{ch:gu} for their novel Weyl group action.  

In the literature, there exists a different formula for metaplectic Whittaker functions which is written in terms of Kashiwara's crystal basis. This formula was first obtained by B. Brubaker, D.Bump, and S. Friedberg for metaplectic covers of $\mathrm{SL}_r$ (see \cite{bbf-annals} and references therein); a connection to crystals for more general groups was made by McNamara using a different, geometric approach in \cite{mac-cryst}.  In case of $\mathrm{SL}_r$, specializing the formulas obtained from this crystal theoretic approach to the case of $n=1$ (i.e., non-metaplectic setting), one obtains a formula for the Whittaker function different from the usual Casselman-Shalika formula, the two being related by a (non-trivial) combinatorial identity of Tokuyama (see \cite{tok}).   In the case of metaplectic ${\mathrm{SL}_r},$ a direct connection (one can also derive this from the work of McNamara \cite{mac-cryst}) between the crystal theoretic formula and the Casselman-Shalika formula of \cite{ch:of} (our Corollary \ref{main-cs-fin}) was made by the second named author in \cite{pus-th} using the combinatorics of the metaplectic Demazure-Lusztig operators. More precisely, an identity is phrased in \emph{loc. cit} that relates the formulas of \cite{bbf-annals} to those of \cite{ch:of}, generalizing the work of Tokuyama.

\tpoint{On our assumptions}\label{tpoint:assumptions} In this paper, we make some rather restrictive assumptions. We are not aiming to show the maximal generality in which our methods work, but rather to illustrate in a representative class how our techniques apply. So we assume that the underlying non-metaplectic group is split, simple, and simply connected. The reason is so that we can directly use the construction of Steinberg and Matsumoto \cite{mat}, which is written in this generality. If one wants to consider a general \emph{split} reductive group, one can work with the covers constructed by Deligne-Brylinski \cite{del-bry}, and we expect then that our construction will carry over with only minor modifications. To remove the splitness assumption requires a little more care, but some of the necessary rank one computations are already worked out in McNamara in \cite{mac-met} for unramified groups. It seems that combining these calculations and a general descent process also described in \emph{loc. cit}, we can extend the present work to unramified groups. 

A more intricate set of restrictions to lift are those imposed between the local field $\mc{K}$ and the degree of the metaplectic cover $n.$  Specifically we assume that we are in the tame case, i.e. $(q, n)=1$ and also that $q \equiv 1 \mod 2n.$ The former assumption is necessary for us to assert that the maximal compact of $G$ has a splitting in the metaplectic cover.  For a general (reductive, not-necessary split) group, the splitting in covers of maximal compact subgroups has been studied by Gan and Gao \cite[Cor. 4.2]{gg} and the situation seems subtle outside the simply connected case. Finally we comment on the assumption that $q \equiv 1 \mod 2n:$ while practically allowing us to avoid certain sign issues (see for example \S \ref{s:hilb-sym}), it also hides the full complexity of the true metaplectic $L$-group which Weismann has constructed (see \cite[\S 4.4.3]{weis}). It would be very interesting to see how to remove this assumption.

%

\tpoint{Acknowledgements} M.P. and A.P. were supported from the Subbarao Professorship in Analytic Number Theory, an NSERC discovery grant, and an University of Alberta startup grant.  

M.P. would like to thank Alexander Braverman for some useful discussions during the Park City Math Institute pertaining to the material in  \S \ref{braid-cg}. 

We would also like to thank the anonymous referee for several thoughtful suggestions.

\section{Basic Notations on Local Fields} 

\spoint Let $\mc{K}$ be a non-archimedean local field with ring of integers $\O$ and valuation map $\valu: \mc{K}^* \rr \zee.$ Let $\pi \in \O$ be a uniformizing element, and $\kk:= \O / \pi \O$ be the residue field, whose size we denote by $q.$ The norm associated to $\valu$ will be $|x|:= q^{ - \valu(x) } $ for $x \neq 0$ and $| 0 | = 0.$ Denote by $\varpi: \O \rr \kappa$ the natural quotient map.

\spoint Let $du$ be the Haar measure on $\mc{K}$ giving $\O$ volume $1.$ For $k \geq 0$ we denote the set $\O^*[k]= \{ x \in \mc{K}^* \mid \valu(x) = k \}$ (so $\O^*=\O^*[0]$) and the group $\O(k) = \{ x \in \mc{K}^* \mid \valu(x) \geq k \}$ (so $\O=\O(0)$) We have $\O^*[k]= \O(k) \setminus \O(k+1).$ The fixed Haar measure gives $\O(k)$ volume $q^{-k}$ and $\O^*[k]$ volume $q^{-k}(1-q^{-1}).$

\spoint \label{psi-sec} Let $\psi: \mc{K} \rr \C^*$ be an additive character with conductor $\O,$ i.e., $\psi$ is trivial on $\O$ and non-trivial on $\pi^{-1}\O.$  One may easily verify, \be{van-psi} \int_{\O^*} \psi(\pi^k u) du =  \begin{cases} 1 - q^{-1} & \text{ if } k > -1 \\ - q^{-1} & \text{ if } k=-1 \\ 0 & \text{ if } k \leq -2 \end{cases} \ee


\spoint \label{s:hilb-sym} Let $n > 0$ be a positive integer, and denote by $\bmu \subset \mc{K}$ the set of $n$-th roots of unity. We assume first of all that we are in the tame case, so that $(q, n)=1$ where $q$ is the size of the residue field. Furthermore, we assume that $| \bmu | =n.$ This implies that $q \equiv 1 \mod n$, but we make the stronger assumption that $q \equiv 1 \mod{2n}.$ Fix an embedding $\iota:\bmu \hookrightarrow \C^*$ throughout.  The $n$-th order Hilbert symbol (see e.g. \cite[\S 9.2, 9.3]{ser:lf}) is a bilinear map $( \cdot, \cdot) : \mc{K}^* \times \mc{K}^* \rr \bmu$ satisfying \be{hilb-sym} \begin{array}{cc} (x, y) = (y, x)^{-1}, &   (x, -x) = 1 \end{array}.\ee Our assumptions on $q$ and $n$ imply that $(\cdot, \cdot)$ is a power of the usual tame symbol. One can verify that $(\cdot, \cdot)$ is \emph{unramified}, i.e. $(x, y)=1$ if $x, y \in \O^*,$ and also that $(\pi^a,\pi^b)=1$ for any $a,b\in \ZZ.$\footnote{This fact is used in \S \ref{spoint:a-dec}. It requires that $q-1$ be divisible by $2n$ and not just $n$. When this assumption on $q$ and $n$ is not made the structure of the metaplectic torus is significantly more complex, see \cite[3.3]{gg}, and also \ref{tpoint:assumptions} for further remarks on our assumptions.}


\spoint Let $\sigma: \O^* \rr \C^*$ be a multiplicative character. We say that $\sigma$ has conductor of size $a$ (an integer) if $1 +\pi^a\O$ is the largest subgroup on which $\sigma$ is trivial. Let $\tau: \O \rr \C^*$ be an additive character; we say that $\tau$ has conductor of size $b$ (also an integer) where $b= \inf\{ m \in \zee \mid \tau|_{\pi^m \O} \equiv 1 \}.$ We define the \emph{Gauss sum} \be{gauss:gen} \g(\sigma, \tau) = \int_{\O^*} \sigma(u') \tau(u') du' \ee where $du'$ is the restriction of the Haar measure on $\mc{K}$ which gives $\O^*$ volume $q-1.$ Note that if $du$ is the restriction of the usual Haar measure on $\mc{K}$ assigning $\O$ volume $1,$ 
\be{gauss-gen-norm} q^{-1} \g(\sigma, \tau) = \int_{\O^*} \sigma(u) \tau(u) du. \ee 

\begin{nlem}[Lemma 7-4, \cite{ram}] \label{g-gen:van} If $\sigma: \O^* \rr \C^*$ is a multiplicative character with conductor of size $a$ and $\tau:\O \rr \C^*$ is an additive character of conductor of size $b,$ then $\g(\sigma, \tau)=0$ if  $a < b.$ \end{nlem}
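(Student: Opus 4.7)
The driving idea is to exploit the mismatch in levels dictated by the hypothesis $a<b$: on the one hand $\sigma$ is trivial on $1+\pi^a\O$, while on the other hand $\tau$ is still non-trivial on $\pi^a\O$, since $b$ is by definition the smallest integer for which $\tau|_{\pi^b\O}\equiv 1$ and $\pi^{b-1}\O\subseteq\pi^a\O$.

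First I would decompose $\O^*$ into the (finitely many) cosets of the subgroup $1+\pi^a\O$, writing $\O^*=\bigsqcup_{j} u_j(1+\pi^a\O)$. Because $\sigma$ is trivial on $1+\pi^a\O$ by the conductor hypothesis, $\sigma$ is constant on each coset with value $\sigma(u_j)$, so
\[
\g(\sigma,\tau)\;=\;\sum_j \sigma(u_j)\int_{u_j(1+\pi^a\O)} \tau(u')\,du'.
\]

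Next I would use the simple observation that $u_j(1+\pi^a\O)=u_j+\pi^a\O$, valid because $u_j\in\O^*$ preserves $\pi^a\O$ under multiplication. Combining translation-invariance of $du$ with the additive-character identity $\tau(u_j+y)=\tau(u_j)\tau(y)$, each coset integral factors as
\[
\int_{u_j+\pi^a\O}\tau(u')\,du'\;=\;\tau(u_j)\int_{\pi^a\O}\tau(y)\,dy,
\]
and therefore
\[
\g(\sigma,\tau)\;=\;\Bigl(\sum_j \sigma(u_j)\tau(u_j)\Bigr)\int_{\pi^a\O}\tau(y)\,dy.
\]

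Finally I would invoke the standard fact that the integral of a non-trivial additive character over a compact additive subgroup vanishes: since $\pi^a\O$ is compact and $\tau|_{\pi^a\O}$ is non-trivial (as observed in the first step), $\int_{\pi^a\O}\tau(y)\,dy=0$, and the lemma follows. There is essentially no obstacle here; the manipulation is a straightforward orthogonality argument, and the only care needed is the bookkeeping around the coset identity $u_j(1+\pi^a\O)=u_j+\pi^a\O$ and the normalization of Haar measure in the definition of $\g(\sigma,\tau)$.
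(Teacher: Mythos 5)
Your proof is correct. The paper does not actually prove this lemma -- it is quoted from Ramakrishnan--Valenza \cite{ram}*{Lemma 7-4} -- and your argument is essentially the standard one from that reference: since $a<b$, minimality of $b$ forces $\tau$ to be nontrivial on $\pi^{a}\O \supseteq \pi^{b-1}\O$, while $\sigma$ is constant on each coset of $1+\pi^{a}\O$; the identity $u_j(1+\pi^{a}\O)=u_j+\pi^{a}\O$ and translation invariance then reduce every coset integral to $\tau(u_j)\int_{\pi^{a}\O}\tau(y)\,dy$, which vanishes by orthogonality of the nontrivial character $\tau|_{\pi^{a}\O}$ on the compact group $\pi^{a}\O$. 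The only point worth making explicit is that the statement presupposes $a\geq 1$ (so that $1+\pi^{a}\O$ really is a subgroup of $\O^{*}$, i.e.\ the conductor hypothesis on $\sigma$ parses); this is the intended reading, and it is genuinely needed, since for unramified $\sigma$ and $b=1$ the integral equals $-q^{-1}\neq 0$ by \eqref{van-psi}. With that convention your bookkeeping (finitely many cosets, normalization of $du'$) is all fine, and nothing further is missing.
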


\tpoint{Example} \label{ex:gsum} The main examples we shall consider are $\sigma(u)= (u, \pi)^{-k}$ with $(\cdot, \cdot)$ the Hilbert symbol introduced above and $\tau(u)= \psi(-\pi^m u)$ with $\psi$ a principal character as in \S \ref{psi-sec} and $m \in \zee$. Note that $\tau$ has conductor of size $m$ and $\sigma$ has conductor of size $1.$ Hence by Lemma \ref{g-gen:van} we have \be{gvan} \int_{\O^*} (u,\pi)^{-k} \psi(-\pi^{-b} u ) du = 0 \text{ for } b < -1. \ee We set  $\mathbf{g}_{k} := \int_{\O^*} (u', \pi)^{-k} \psi(-\pi^{-1} u' ) du',$ so that  \be{g-2} q^{-1} \mathbf{g}_{k} = \int_{\O^*} (u,\pi)^{-k} \psi(-\pi^{-1} u ) du. \ee We record some facts (also verified in \cite{ram}) : $\mathbf{g}_{k} =\mathbf{g}_{l} $ if $n\mid k-l,$ $\mathbf{g}_{0}=-1,$ $n\nmid k,$ then $\mathbf{g}_{k} \mathbf{g}_{-k}=q.$ Note that we use the assumption $q\equiv 1 \mod \ 2n$ for the last fact. 

\section{Recollections on Metaplectic Groups}

\spoint For any root system $\Phi$ we denote by $R$ the set of roots, $R^{\vee}$ the set of coroots, and $\Pi$ (resp. $\Pi^{\vee}$) the set of simple roots (resp. coroots). Positive and negative roots will be denoted by a $\pm$ subscript, e.g. $R^{\vee}_-$ denotes the negative coroots. Let $W$ be the Weyl group of $\Phi$, and for each $a \in \Pi$ let $w_a$ be the corresponding simple reflection. Let $Q$ (resp. $Q^{\vee}$) denote the root (resp. coroot) lattice, and $\Lambda$ (resp. $\Lambda^{\vee}$) the lattice of weights (resp. coweights).  Denote by $\la \cdot ,\cdot \ra$ the natural pairing $\Lambda^{\vee} \times \Lambda \rr \C.$

\spoint \label{fingp} Let $G$ be a split, simple, simply connected group over $\mc{K}$ with maximal split torus $A,$ and denote by $\Phi(G, A)$ the corresponding root system (note $\Lv = Q^{\vee}$ under our assumptions). Pick a pair of opposite Borel subgroups $B$ and $B^-$ such that $B \cap B^- = A$ For each $a \in R$  there exists a one parameter subgroup $U_a \subset G$ whose elements will be denoted by $\sx_a(s)$ for $s \in \mc{K}.$ For each $a \in R$ and $t \in \mc{K}^*,$ we also set \be{w; h} \begin{array}{lcr} \sw_a(t) = \sx_a(t) \sx_{-a}(-t^{-1})\sx_a(t) & \text{ and } & \sh_a(s) = \sw_a(s) \sw_a(1)^{-1} \end{array}. \ee The group $G$ is generated by $U_a$ for $a \in R$ with relations as described in \cite{steinberg}.   The abelian subgroup $A$ is generated by the elements $\sh_a(s)$ for $a \in R, s \in \mc{K}^*.$ We also denote by $U$ (resp. $U^-$) the unipotent radical of $B$ (resp. $B^-$). Recall that we have the Bruhat decompositions, \be{bru} G = \sqcup_{w \in W}  B \dot{\sw} B = \sqcup_{w \in W} B \dot{\sw} B^- \ee where if $w \in W$ has a reduced decomposition $w= w_{b_1} \cdots w_{b_r}$ with $b_i \in \Pi,$ we let $\dot{\sw}=\sw_{b_1}(-1) \cdots \sw_{b_r}(-1)$ be a fixed lift.   

The maximal compact subgroup will be denoted by $K$ and is equal to the group generated by $\sx_a(u)$ for $a \in R, u \in \O.$ Note that the lifts from the previous paragraph $\dot{\sw}$ lie in $K.$ We denote by $I, I^- \subset K$ the Iwahori subgroups with respect to $B$ and $B^-$ respectively, i.e., \be{iwa-def} \begin{array}{lcr} I = \{ g \in K \mid g \mod{\pi} \in B_{\kappa} \} & \text{ and }  & I^- = \{ g \in K \mid g \mod{\pi} \in B^-_{\kappa} \} \end{array} \ee where the subscript $\kappa$ denotes the same group over the residue field, and $g \mod{\pi}$ denotes the image of $g$ under the natural map $K \mapsto G_{\kappa}.$ We then have \be{K:im} K = \sqcup_{w \in W} I^- \dot{\sw} I = \sqcup_{w \in W} I \dot{\sw} I. \ee  Let $\aw:= \Lv \rtimes W$ denote the affine Weyl group; a typical element of $\aw$ will be written as $(\mv, w)$ with $\mv \in \Lv, w \in W.$ We then have decompositions \be{iwa:dec} G = \sqcup_{x \in \aw} I^- \dot{\sx} I = \sqcup_{x \in \aw} I \dot{\sx} I \ee where $\dot{\sx}= \pi^{\mv} \dot{\sw}$ (with $\dot{\sw}$) as above is our chosen lift of $x$ to $G.$

\spoint \label{spoint:univ:ext} Let $\mc{E}$ be the universal central extension of $G$ as constructed by Steinberg (see \cite{steinberg}). Recall that $\bE$ is generated by symbols $\bx_a(s)$ for $a \in R, s \in \mc{K}$ subject to the relations again described in \emph{loc. cit}. There is a natural homomorphism $p: \bE \rr G$ which sends $\mathbf{x}_a(s) \mapsto \sx_a(s)$ and whose kernel we denote by $C:=\ker(p);$ we have an exact sequence \be{univ:ext} 0 \rr C \rr \bE \stackrel{p}{\rr} G \rr 1. \ee The group $C$ can be described explicitly following Steinberg and Matsumoto as follows. For each $a \in R$ and $t \in \mc{K}^*$ we denote by \be{w; h} \begin{array}{lcr} \bw_a(t) = \bx_a(t) \bx_{-a}(-t^{-1}) \bx_a(t) & \text{ and } & \bh_a(s) = \bw_a(s) \bw_a(1)^{-1} \end{array}. \ee Then for each $a \in R$ and $s, t \in \mc{K}^*$ there exist central elements $\c_{\av}(s, t) \in \mc{E}$ so that \be{h:cen} \bh_a(s) \bh_a(t) \bh_a(st)^{-1} = \c_{\av}(s, t). \ee From Steinberg's work, one deduces (a) for every short coroot $\av \in R^{\vee}$ (or long root) the elements $\c_{\av}(s, t)$ are equal-- denote this value simply by $\c(s, t);$ and (b) there is a $W$-invariant quadratic form $\Q: \C[\Lv] \rr \zee$ so that \be{c:Q} \c_{\av}(s, t) = \c(s, t)^{\Q(\av)}. \ee Let $\bA$ be the subgroup generated by $\bh_a(s)$ for $a \in R, s \in \mc{K},$ and let $\B$ be the bilinear form associated to $\Q,$ namely $\B(x, y)= \Q(x+y) - \Q(x) - \Q(y).$ One can then show, \be{comm:B} [ \bh_a(t), \bh_b(u) ] = \c(u, t)^{\B(\av, \bv)}. \ee Further, Matsumoto has shown \cite{mat} that $C$ is generated by $\sc(s, t)$ for $s, t \in \mc{K}^*$ subject to the relations \be{mat-rel}\begin{array}{lccr} \c(s, -s)=\c(1-s, s)=1, & \c(s, t)= \sc(t, s)^{-1} & \c(s, t)\c(s, u) = \c(s, tu) \end{array}. \ee 

\spoint Pushing forward \eqref{univ:ext} by the Hilbert symbol $(\cdot, \cdot)$ we obtain a group $\tG$ fitting into  \be{met:ext} 0 \rr \bmu \rr \tG \stackrel{p}{\rr} G \rr 1. \ee We shall denote the image of $\bx_a(s)$ simply as $x_a(s),$ and we similarly define $h_a(s)$ and $w_a(s).$ Let $\tU_a$ (resp. $\tU_{a}(\O)$) denote the subgroup generated by $x_a(s)$ for $s \in \mc{K}$ (resp. $s \in \O$), and $\tU, \tU^-$ be the subgroups generated by $\tU_{a}$ for $a \in R_+$ and $a \in R_-$ respectively. Note that $\tU \cong U, \tU^- \cong U^-$ and so we often drop the $\, \tilde{} \, $ and identify $\tU, \tU^-$ with their images $U, U^-$ in $G.$ Let us also set $c_{\av}(s, t)= h_a(s) h_a(t) h_a(st)^{-1}.$ The analogue of the relations \eqref{c:Q} and \eqref{comm:B} persist with $\c_{\av}(s, t)$ replaced by $c_{\av}(s, t).$ Often we write simply $(s, t)$ to denote $c_{\av}(s, t)$ for $\av$ the short coroot. Assuming that $(q, n)=1,$ there exists a splitting $\mathsf{s}$ over $K \subset G.$  Denote by $\tK= \mathsf{s}(K).$ The argument in \cite[Prop 3.3]{savin} shows that $\tK$ is generated by $\tU_a(\O)$ for $a \in R.$ Note that if we have $K=[K,K],$ the splitting $\mathsf{s}$ is also unique. 

\spoint\label{spoint:a-dec} Let $\tA \subset \tG$ be the subgroup generated by $h_a(s)$ for $a \in R, s \in \mc{K},$ and let $\tA_{\O} \subset \tA$ be the subgroup generated by elements $h_a(s)$ for $s \in \O^*$ and $a \in R.$  Note that $\tA_{\O} \subset \tK$ and also, since $(\cdot, \cdot)$ was assumed unramified, $\tA_{\O}$ is abelian. It is easy to see that each $a \in \tA$ admits a decomposition \be{a-dec} a = a' \pi^{\lv} \zeta \text{ where } a' \in \tA_{\O}, \lv \in \Lv, \zeta \in \mu. \ee The element $\lv$ is determined uniquely from $a$ as one can verify by reducing to the non-metaplectic setting. Moreover, since $\bmu$ is a central subgroup, we may use the commutation relations (\ref{comm:B}) to verify that $\tA_{\O} \cap \bmu = 1.$ Hence in \eqref{a-dec} the terms $a', \lv, \zeta$ are uniquely determined from $a.$ With these conventions, set  \be{log:cen} \begin{array}{lcr} \ln (a) := \lv & \text{ and } & \z(a) := \zeta \end{array}, \ee and note these maps descend to the quotient $\tA_{\O} \setminus \tA.$ 

Let $A_0:= C_{\tA}(\tA_{\O})$ denote the centralizer of the $\tA_{\O}.$ Under the assumption that $q \equiv 1 \mod 2n,$ one can show (see \cite[Lemma 5.3]{mac-prin}) that $A_0$ is a maximal Abelian subgroup of $\tA.$ In \S \ref{B:Q}, we study the lattice $\Lv_0$ such that $\Lv_0 \cong A_0/ \bmu \cdot \tA_{\O}.$

%

%
%
%
%
%
\tpoint{Iwasawa and Iwahori-Matsumoto Decompositions} \label{decs} The Iwasawa decomposition for the group $G$ states that $G = K A U$ where if $g \in G$ is written as $g = k a u$ with $k \in K, a \in A,$ and $u \in U$ then $a$ is not well-defined but its class in $ A \cap K \setminus A = A_{\O} \setminus A  = \Lv$ is well-defined. One can lift this to the group $\tG$ as follows: we have \be{iwa-met} \tG = \tK \tA U \ee so that every $g \in \tG$ may be written (non-uniquely) as \be{iwa-elt} g = k a u \text{ where } k \in \tK, a \in \tA, u \in U, \ee with the class of $a$ in $\tA \cap \tK  \setminus \tA$ being well-defined. Denote this class as $\iw_{\tA}(g) \in \tA \cap \tK \setminus \tA,$ and set \be{log:cen:g} \begin{array}{lcr} \ln (g):= \ln(\iw_{\tA}(g)) \in \Lv & \text{ and } & \z(g) :=  \z(\iw_{\tA}(g)) \in \bmu \end{array}. \ee Hence, for each $\nu \in \Lambda$ we may define a function $\Pshi_{\nu}: \tG \rr \C[\Lv]$ as \be{Phi} \Pshi_{\nu}(g) =  q^{\la \nu, \ln(g) \ra} e^{\ln(g) } \iota(\z(g)) = q^{\la \nu, \lv \ra} e^{\lv } \iota(\zeta) \ee where $g \in \tG$ is decomposed according to \eqref{a-dec}, \eqref{iwa-elt}. 

If we define $\tI, \tIm \subset \tK$ in analogy with $I, I^-$ as above, then again we have \be{im-met} \tG = \sqcup_{x \in \aw} \tI \dot{x} \tI = \sqcup_{x \in \aw} \tI \dot{x} \tIm \ee where $\dot{x} \in \tG$ is the lift defined as in \S \ref{fingp} with $\sw_a$ replaced by $w_a.$ The analogue of \eqref{K:im} also holds.  

\newcommand{\gf}{\mathbb{g}}

\section{Metaplectic Demazure-Lusztig Operators}
\label{sec-met-dl} 
In this section, we recall the main constructions of \cite{cgp} in the form which we need for the sequel.  Fix a positive integer $n$ throughout-- it will correspond to the degree of the metaplectic cover. The constructions in \cite{cgp} also makes use of a set of formal parameters $v,$ $\mathbb{g}_0,\ldots ,\mathbb{g}_{n-1}$ satisfying 
\be{par-cond}\mathbb{g}_0=-1\text{ and } \mathbb{g}_i\mathbb{g}_{n-i}=v^{-1}\text{ for }i=1,\ldots ,n-1.\ee
In the \emph{sequel} (i.e. when we move to the $p$-adic setting), we will set 
\be{par:choices} v=q^{-1}\text{ and }\gf_k=\mathbf{g}_{k}\text{ for }k=0,\ldots ,n-1.\ee
where $\mathbf{g}_{k}$ is the Gauss sum defined in \S \ref{ex:gsum}, but in this section we treat $v$ and the $\gf_i$ as formal variables satisfying the above conditions.   Note that the identities in the present section, in particular \eqref{P:cs} and Corollary \ref{CG-is-action} hold in the formal setting \cite{ch:gu, cgp}. However, the proof of Corollary \ref{CG-is-action} presented in \S \ref{braid-cg} only addresses the case when the choices of \eqref{par:choices} are made.

\spoint \label{rings} Let $\C[\Lv]$ denote the group algebra of the coweight lattice. To each $\lv \in \Lv$ we denote by $e^{\lv}$ the corresponding element in $\C[\Lv]$ with multiplication defined as $e^{\lv} e^{\mv} = e^{\lv + \mv}$ for $\lv, \mv \in \Lv.$ The natural action of $W$ on $\Lv$ (i.e., $w.e^{\lv}= e^{w \lv}$ for $w \in W, \lv \in \Lv$) extends linearly to an action of $W$ on $\C[\Lv].$ For $h \in \C[\Lv], w \in W$ we sometimes write $h^w$ for the element obtained by applying $w$ to $h.$


\spoint \label{B:Q} In \S \ref{spoint:univ:ext}, we introduced a $W$-invariant quadratic form $\Q: \Lv \rr \zee$ with associated bilinear form $\B(\cdot, \cdot).$  The $W$-invariance of $\Q$ is equivalent to the following fact (see \cite[(4.5.1)]{del-bry}): for any $a \in R$ we have \be{w-inv} \B(\lv, \av)= \la \lv, a \ra \Q(\av). \ee Let $\Lv_0 \subset \Lv$ be the sublattice defined as \be{Lv0} \Lv_0:= \{ \lv \in \Lv \mid \B(\lv, \av) \equiv 0 \mod{n} \text{ for all } \av \in \Pi^{\vee} \}. \ee For each $a \in R,$ define an integer \be{ma:def} \m(\av):= \frac{n}{\gcd(n, \Q(\av))}, \ee and the residue map \be{resid} \resi_{\m(\av)}: \zee \rr \{ 0, 1, \ldots, \m(\av)-1 \}. \ee  It is easy to verify the following  properties: {\center \begin{enumerate}
  \item[(i)] For any integer $k \in \zee$ we have $k \Q(\av) \equiv 0 \mod n$ if and only if  $k \equiv 0 \mod \m(\av)$
  \item[(ii)] For any $\av \in R^{\vee}$ we have $\m(\av)=\m(w \av)$ for $w \in W.$
  \item[(iii)] For any $a \in R$ we have $\m(\av) \av \in \Lv_0$ (this follows from \eqref{w-inv}).
  \end{enumerate}
}

\spoint \label{subs:CGaction} Set $\C_v := \C[v, v^{-1}]$ and denote by $\C_v[\Lv]:= \C_v \otimes_{\C} \C[\Lv].$ We shall also need some localizations of this ring in the sequel. Let $\mult$ be the smallest subset of $\C_v[\Lv _0]$ closed under multiplication containing $1 - e^{-\m(\av) \av}$ and $1 - v e^{-\m(\av) \av}$ for every $a\in R,$ and let $\ring$ (respectively, $\nice$) denote the localization of $\C_v[\Lv]$ (respectively, of $\C_v[\Lv_0]$) by $\mult .$ 

Following Chinta and Gunnells \cite{ch:gu}, for $a \in \Pi$ we set  
{\small \be{cg-act} w_a \star e^{\lv} = \frac{e^{w_a \lv}}{1 - v e^{-\m(\av) \av}} \left[ (1-v) e^{ \resi_{\m(a)} \left( \frac{ B(\lv, \av)}{\Q(\av)} \right) \av } - v \gf_{ \Q(\av) + \B(\lv, \av) } e^{ (\m(\av) - 1) \av} (1 - e^{-\m(\av) \av}) \right].\ee} Extend by $\C_v$-linearity to define $w_a \star f$ for every $f \in \C_v[\Lv],$ and use the formula \be{cg-act-quot} w_a \star \frac{f}{h} = \frac{w_a \star f}{h^{w_a}} \text{ for } f \in \C_v[\Lv], h\in \mult, \ee to extend this to $w_a \star -: \ring \rr \ring.$


\begin{nlem} \label{cg-basic-prop} Let $f \in \C_v[\Lv]$ and $a \in \Pi.$ Then 
\begin{enumerate}
\item We have $w_a \star (h f) = h^{w_a} (w_a \star f)$ for $h\in \nice$ (note: this fails for general $h$).
\item $w_a \star: \ring \rr \ring$ is an involution: i.e., $w_a \star w_a (f) = f.$  
\end{enumerate}
\end{nlem}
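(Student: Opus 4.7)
The plan for part (1) is to reduce by $\C_v$-linearity to proving $w_a \star (e^{\lv_0} \cdot e^{\lv}) = e^{w_a \lv_0} \cdot (w_a \star e^{\lv})$ for $\lv_0 \in \Lv_0$ and $\lv \in \Lv$, and then to read this off the defining formula \eqref{cg-act}. Three $\lv$-dependent ingredients appear there: the monomial $e^{w_a \lv}$, the residue $\resi_{\m(\av)}(\B(\lv, \av)/\Q(\av))$, and the parameter $\gf_{\Q(\av) + \B(\lv, \av)}$. Replacing $\lv$ by $\lv_0 + \lv$, the first picks up a factor of $e^{w_a \lv_0}$; the second is unchanged because $\B(\lv_0, \av)/\Q(\av) = \la \lv_0, a \ra$ by \eqref{w-inv}, and this integer lies in $\m(\av)\zee$ by property (i) of \S\ref{B:Q} since $\B(\lv_0, \av) \in n\zee$; and the third is unchanged because $\gf_j$ depends only on $j \bmod n$ while $\B(\lv_0, \av) \in n\zee$. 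The bracketed expression and denominator in \eqref{cg-act} are thus unaffected, and only the factor $e^{w_a \lv_0}$ floats out, proving the claim on monomials. Passing from $h \in \C_v[\Lv_0]$ to general $h \in \nice$ is then handled by writing $h = h'/k$ with $h' \in \C_v[\Lv_0]$ and $k \in \mult$, and applying \eqref{cg-act-quot}.

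For part (2), I would first extend the statement of (1) from $f \in \C_v[\Lv]$ to $f \in \ring$: given $f = g/k$ with $g \in \C_v[\Lv]$ and $k \in \mult \subset \C_v[\Lv_0]$, combining \eqref{cg-act-quot} with (1) gives $w_a \star (hf) = h^{w_a}(w_a \star f)$ for $h \in \nice$. Applying this twice, and using that $\Lv_0$ is $W$-stable (since $\B(w_a\lv_0, \bv) = \B(\lv_0, \bv) - \la \bv, a\ra \B(\lv_0, \av)$ is a sum of two multiples of $n$, so that $w_a$ is an involution on $\nice$), one obtains $w_a \star (w_a \star (hf)) = h \cdot w_a \star (w_a \star f)$. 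Hence the map $f \mapsto w_a \star (w_a \star f) - f$ is $\nice$-linear on $\ring$, and since $\Lv / \Lv_0$ is finite it suffices to verify $w_a \star (w_a \star e^{\lv}) = e^{\lv}$ for $\lv$ ranging over a set of coset representatives of $\Lv / \Lv_0$.

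This final check is a direct computation from \eqref{cg-act} and \eqref{cg-act-quot}. Setting $X = e^{-\av}$, the expression $w_a \star e^{\lv}$ has the form $e^{\lv} \cdot P(X)/(1-vX^{\m(\av)})$ for an explicit Laurent polynomial $P(X)$ depending only on $r := \resi_{\m(\av)}(\la \lv, a\ra)$; applying $w_a \star$ a second time (and using \eqref{cg-act-quot} to handle the already-present denominator via the substitution $X \mapsto X^{-1}$) yields an expression that collapses back to $e^{\lv}$ upon invoking the relations \eqref{par-cond}, specifically $\gf_i \gf_{n-i} = v^{-1}$ and $\gf_0 = -1$. The main obstacle is purely bookkeeping: the verification splits naturally according to whether $r = 0$ or $r \neq 0$, and in the latter (generic) case one must match up four terms using \eqref{par-cond} to see the cancellation. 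This is essentially the computation carried out in \cite{ch:gu} in the formal setting, from which the lemma could alternatively be quoted directly.
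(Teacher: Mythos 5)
Your proposal is correct and follows essentially the same route as the paper, whose proof simply asserts that part (1) is immediate from the definitions and that part (2) is a straightforward computation using part (1) and \eqref{par-cond}; your write-up supplies the details (the residue and Gauss-sum indices are unchanged modulo $\m(\av)$ and $n$ respectively, reduction by $\nice$-linearity, then the rank-one check) at a level of precision exceeding the paper's own.
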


\begin{proof} The proof of the first statement follows immediately from the definitions.  The proof of the second is a straightforward computation, making repeated use of the first part and \eqref{par-cond}. \end{proof}

\spoint For each $a \in R$ we introduce two important rational functions 
\be{c:b} \begin{array}{lcr} \b(\av):= \frac{v-1}{1 - e^{\m(\av) \av}} & \text{ and } & \c(\av):= \frac{1 - v e^{-\m(\av) \av}}{1 - e^{\m(\av)\av }} \end{array}, \ee and define, for each $a \in \Pi$ the operators  \be{T:c-b} \T_a(e^{\lv}) = \c(\av) w_a \star e^{\lv} + \b(\av) e^{\lv}. \ee Replacing $w_a \star e^{\lv}$ with $e^{w_a \lv}$ we obtain the operators in the non-metaplectic setting which appeared in \cite{pat-whit, bbl}. Let us also record here the following simple identity, \be{c:act} \c(\av) w_a \star e^{\lv} = \frac{1-v}{1 - e^{\m(\av) \av}} e^{w_a \lv} \, e^{( \resi_{\m(\av)} \la \lv, a \ra)  \av}  + \,  v \gf_{(1 + \la \lv, a \ra) \Q(\av)} e^{w_a \lv - \av} . \ee We also define $\T_1: \C_{v}[\Lv] \rr \C_{v}[\Lv]$ to be the identity operator.  \begin{nclaim} \label{T:pol} For any $a \in \Pi$ we have $\T_a: \C_v[\Lv] \rr \C_v[\Lv].$ \end{nclaim}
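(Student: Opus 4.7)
The goal is to show that the only apparent pole of $\T_a(e^{\lv}) \in \ring$, namely the factor $1 - e^{\m(\av)\av}$ in the denominators of both $\b(\av)$ and $\c(\av)$, actually cancels, so that the resulting expression lies in $\C_v[\Lv]$.

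The plan is to invoke identity \eqref{c:act}, which already displays $\c(\av)\, w_a\star e^{\lv}$ as the sum of a manifestly polynomial piece $v\,\gf_{(1+\la\lv,a\ra)\Q(\av)}\, e^{w_a\lv - \av}$ and a single fractional piece whose only denominator is $1 - e^{\m(\av)\av}$. Combining this fractional piece with $\b(\av) e^{\lv} = \tfrac{v-1}{1 - e^{\m(\av)\av}} e^{\lv}$, the entire denominator-carrying contribution to $\T_a(e^{\lv})$ collapses to
\[
\frac{1-v}{1 - e^{\m(\av)\av}}\,\Big(e^{w_a\lv + (\resi_{\m(\av)}\la \lv, a\ra)\av} - e^{\lv}\Big).
\]

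Next I would simplify the bracket using $w_a\lv = \lv - \la\lv, a\ra \av$. Writing $\la\lv, a\ra = q\,\m(\av) + \resi_{\m(\av)}\la\lv, a\ra$ for a unique integer $q$, the exponent of the first term becomes $\lv - q\,\m(\av)\av$, so the bracket equals $e^{\lv}\bigl(e^{-q\,\m(\av)\av} - 1\bigr)$. The claim therefore reduces to the observation that, for any integer $q$ and with $m = \m(\av)$, the expression $(e^{-qm\av} - 1)/(1 - e^{m\av})$ is a Laurent polynomial in $e^{\av}$. Setting $X = e^{m\av}$ and applying the factorisation $X^N - 1 = (X-1)(1 + X + \cdots + X^{N-1})$ to $N = |q|$, together with multiplication by an appropriate monomial $X^{\pm q}$ in the negative-exponent case, gives an explicit witness to the divisibility of $X^{-q} - 1$ by $1 - X$ in $\C[X, X^{-1}]$.

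I do not anticipate a serious obstacle here: the cancellation is structural and is essentially forced by the particular choice of representative $\resi_{\m(\av)}\la\lv, a\ra \in \{0, \ldots, \m(\av)-1\}$ appearing in \eqref{cg-act}, which is precisely what ensures the difference $\resi_{\m(\av)}\la\lv,a\ra - \la\lv, a\ra$ is an integer multiple of $\m(\av)$. The only care required is keeping track of the two signs of $q$ separately, but both cases reduce to the same geometric series identity after clearing a single monomial.
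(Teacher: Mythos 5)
Your argument is correct, and it is organized differently from the paper's proof. Both start from the identity \eqref{c:act}, but the paper proceeds by adding $\T_1$, splitting into the three cases $\la \lv, a \ra>0$, $<0$, $=0$, and expanding the rational functions $\frac{1-ve^{-\m(\av)\av}}{1-e^{-\m(\av)\av}}$ and $\frac{1-v}{1-e^{\m(\av)\av}}$ as geometric series in $e^{-\m(\av)\av}$, checking that the infinite tails cancel term by term; the payoff of that bookkeeping is the explicit closed formula \eqref{Ta:i} (and \eqref{iii}), which is exactly what is cited later, e.g.\ in the rank-one computation \eqref{W-rank1}. You instead combine $\b(\av)e^{\lv}$ with the single fractional term of \eqref{c:act} and observe that the numerator $e^{w_a\lv+(\resi_{\m(\av)}\la\lv,a\ra)\av}-e^{\lv}=e^{\lv}\bigl(e^{-q\m(\av)\av}-1\bigr)$ is divisible by $1-e^{\m(\av)\av}$ in $\C_v[\Lv]$, since $\la\lv,a\ra-\resi_{\m(\av)}\la\lv,a\ra=q\,\m(\av)$; this is uniform in $\lv$, needs no case distinction and no series expansion, and correctly identifies the choice of residue representative in \eqref{cg-act} as the structural reason for the cancellation. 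The trade-off is that your version stops at "Laurent polynomial"; to recover the explicit expansion \eqref{Ta:i} you would add one line writing out the geometric sum $e^{-q\m(\av)\av}+\cdots+e^{-\m(\av)\av}$ when $q>0$ (and the analogous sum for $q<0$), which then reproduces the paper's coefficients, including the Gauss-sum term $v\,\gf_{\Q(\av)+\B(\lv,\av)}e^{w_a\lv-\av}$ with positive sign (your computation in the case $\la\lv,a\ra=0$ agrees with \eqref{c:act} and \eqref{Ta:i}; the minus sign in \eqref{iii} appears to be a typo in the paper).
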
 \begin{proof} Since $\T_1(e^{\lv}) = e^{\lv}$ it suffices to show that $\T_a + \T_1$ preserves $\C_{v}[\Lv].$ We have three cases to consider: (i) $\la \lv, a \ra >0$; (ii) $\la \lv, a \ra < 0$; and (iii) $\la \lv, a \ra =0.$ In case (iii) we see that \be{iii} \T_a(e^{\lv}) + \T_1(e^{\lv})= e^{\lv} - v \gf_{\Q(\av)} e^{\lv - \av} \ee so the result follows. As (i) and (ii) are similar, we just focus on (i). In this case,  \be{rk1:T} (\T_a +\T_1)(e^{\lv})  = \underbrace{\frac{ 1 - v e^{-\m(\av) \av}}{1 - e^{-\m(\av) \av}}  e^{\lv} }_{(I)} + \underbrace{ \c(\av) w_a \star e^{\lv}}_{(II)}.  \ee Expanding the rational function appearing in (I) in powers of $e^{-\m(\av) \av}$ we see that \be{1} (I) = e^{\lv} ( 1 + (1-v) e^{-\m(\av) \av} + (1-v) e^{-2 \m(\av) \av} + \cdots ). \ee On the other hand, from \eqref{c:act} we have \be{2} (II) &=& \frac{1-v}{1 - e^{\m(\av) \av}} e^{w_a \lv} \, e^{( \resi_{\m(\av)} \la \lv, a \ra)  \av}  + \,  v \gf_{(1 + \la \lv, a \ra) \Q(\av)} e^{w_a \lv - \av} \\
&=&  \frac{v-1}{1 - e^{\red{-}\m(\av) \av}} e^{w_a \lv- \m(\av) \av} \, e^{( \resi_{\m(\av)} \la \lv, a \ra)  \av}  + \,  v \gf_{(1 + \la \lv, a \ra) \Q(\av)} e^{w_a \lv - \av} \ee Expanding the rational function appearing in the first term in powers of $e^{-\m(\av)\av},$ and keeping track of the coefficients in front of these powers, one easily sees that $(I) - (II)$ is \be{I-II}  e^{\lv} +  \sum_{\substack{k > 0 \\ k \m(\av) \leq \la \lv, a \ra \\ }} (1-v) e^{\lv - k \m(\av) \av} \ +  \  v \gf_{\Q(\av) + \B(\lv, \av) } e^{w_a{\lv} - \av}. \ee In sum: if $\la \lv, a \ra >0,$ we have \be{Ta:i} \T_a(e^{\lv}) = \sum_{\substack{k > 0 \\ k \m(\av) \leq \la \lv, a \ra \\ }} (1-v) e^{\lv - k \m(\av) \av} \ +  \  v \gf_{\Q(\av) + \B(\lv, \av) } e^{w_a{\lv} - \av}, \ee and so case (i) is proven. \end{proof} 

\spoint \label{w-defs} Let $w \in W$ and choose any reduced decomposition $w=w_{b_1} \cdots w_{b_r}$ with $b_i \in \Pi$ for $i=1, \ldots, r.$ Then we may define operators $w \star - : \nice \rr \nice$ and $\T_w: \C_v[\Lv] \rr \C_v[\Lv]$ as follows, \be{w-def} w \star f &:=& w_{b_1} \star \cdots \star w_{b_r} \star f \text{ for } f \in \nice  \\ \label {Tw:def} \T_w (f) &:=& \T_{b_1} \cdots \T_{b_r} (f) \text{ for } f \in \C_v[\Lv] \ee but it remains to be seen that these definitions are well-defined (i.e. independent of the choice of reduced decompositions-- see Theorem \ref{well-def} for the precise statement). To verify this, one may proceed as in \cite{ch:gu, cgp}; an independent proof of this fact (under the specialization \eqref{par:choices}) is given in the Appendix.  Since $w_a$ also acts as an involution (Lemma \ref{cg-basic-prop} (2) ) on $\nice$, from the standard presentation of $W$ we obtain

\begin{ncor} \label{CG-is-action} The operation $\star$ defines an action of $W$ on $\nice .$ \end{ncor}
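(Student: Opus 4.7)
My plan is to deduce the corollary from the Coxeter presentation of $W$, using Lemma \ref{cg-basic-prop}(2) for the quadratic relations and Theorem \ref{well-def} for the braid relations. Recall that $W$ is presented by the generators $\{w_a : a \in \Pi\}$ subject to the relations $w_a^2 = 1$ and the braid relations $w_a w_b w_a \cdots = w_b w_a w_b \cdots$ (each side having $m_{ab}$ factors, where $m_{ab}$ is the order of $w_a w_b$). To verify that $w \mapsto (w \star -)$ defines a group homomorphism $W \to \mathrm{Aut}(\nice)$, it suffices to check that the operators $w_a \star -$ satisfy these two families of relations.

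First, each $w_a \star -$ is an involution on $\nice$ by Lemma \ref{cg-basic-prop}(2), so the quadratic relations $w_a^2 = 1$ are satisfied. Second, for the braid relations, note that both expressions $\underbrace{w_a w_b w_a \cdots}_{m_{ab}}$ and $\underbrace{w_b w_a w_b \cdots}_{m_{ab}}$ are reduced decompositions of the longest element $w_0^{ab}$ of the rank-two parabolic subgroup $\la w_a, w_b \ra \subset W$. By Theorem \ref{well-def}, the composed operator $w_0^{ab} \star - : \nice \to \nice$ defined via a reduced decomposition is independent of that choice, so applying the two expressions above to any $f \in \nice$ yields the same result. This is precisely the braid relation for the operators $\star$.

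Combining these, the defining relations of the Coxeter presentation are satisfied by the operators $w_a \star -$, and therefore the assignment $w \mapsto (w \star -)$, defined on reduced words as in \eqref{w-def}, extends consistently to all of $W$ and is compatible with composition; that is, $(w w') \star f = w \star (w' \star f)$ for all $w, w' \in W$ and $f \in \nice$. Thus $\star$ is a genuine $W$-action on $\nice$.

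The only substantive input is Theorem \ref{well-def}, whose proof (deferred to the appendix under the specialization \eqref{par:choices}) is the real content; the corollary itself is a formal consequence together with the involution property. In principle nothing beyond invoking these two facts is needed, so I do not anticipate any obstacle at this stage beyond confirming that Theorem \ref{well-def} is strong enough to yield equality of the operators on all of $\nice$ (not merely on a generating set), which follows since both sides are defined on $\C_v[\Lv_0]$ by the same sequence of $\star$-applications and then extended to $\nice$ by the same quotient rule \eqref{cg-act-quot}.
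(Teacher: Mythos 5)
Your proposal is correct and matches the paper's own argument: the corollary is deduced from the Coxeter presentation of $W$, with Lemma \ref{cg-basic-prop}(2) supplying the quadratic relations and the well-definedness statement of Theorem \ref{well-def} (applied to the two reduced words for the longest element of each rank-two parabolic) supplying the braid relations. Your closing caveat about Theorem \ref{well-def} being the substantive input, and being proved in the appendix only under the specialization \eqref{par:choices}, is exactly the caveat the paper itself records.
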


Note that this result is not used in the proof of Theorem \ref{main-rec}.

%
%
%
%
%
%
%

\spoint \label{sym-fla} The results in this paragraph assume Theorem \ref{well-def} and Corollary \ref{CG-is-action}. Consider the symmetrizer \be{sym:P} \mc{P}:= \sum_{w \in W} \T_w, \ee which we can regard as an operator on $\C_v[\Lv].$ Let us also define 
\be{w-negate} R^{\vee}(w)=\{\av\in R^{\vee}_+\mid w\av \in R^{\vee}_-\},\ee
\be{del:v} \begin{array}{lcr} \Delta_v:= \prod_{a \in R_{+}^{\vee}}  (1 - ve^{-\m(\av) \av})& \text{ and set } &  \Delta= \Delta_1. \end{array} \ee\begin{nthm} \label{cgp-main} \cite[Theorem 4]{cgp} If $\lv \in \Lv_+,$ we have {\small \be{P:cs} \mc{P}(e^{\lv}) = \frac{\Delta_v}{\Delta}  \ \sum_{w \in W} (-1)^{\ell(w)} \left( \prod_{\bv \in R^{\vee}(w^{-1}) } e^{-\m(\bv) \bv} \right) w \star e^{\lv}, 
\ee } where $\star$ is the action introduced in \eqref{cg-act}.  \end{nthm}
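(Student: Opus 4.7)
The plan is to adapt the classical derivation of the Weyl character formula to the Chinta--Gunnells $\star$-action: show that both sides equal $\Delta_v$ times a $W\star$-invariant quotient, then identify the two invariants by matching leading terms. First I would establish the Iwahori--Hecke quadratic relation $\T_a^2 = (v-1)\T_a + v$. Using that $w_a\star$ is an involution on $\ring$ with $w_a\star h = h^{w_a}$ for $h\in\nice$ (Lemma~\ref{cg-basic-prop}), a direct expansion gives
\[
\T_a^2(f) \;=\; \bigl[\c(\av)\c(\av)^{w_a} + \b(\av)^2\bigr]\,f \;+\; \c(\av)\bigl[\b(\av) + \b(\av)^{w_a}\bigr]\,(w_a\star f).
\]
The identities $\b(\av) + \b(\av)^{w_a} = v-1$ and $\c(\av)\c(\av)^{w_a} - \b(\av)\b(\av)^{w_a} = v$ follow by clearing denominators in \eqref{c:b}, and together yield the quadratic relation.

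The quadratic relation, together with $\T_w = \T_{b_1}\cdots \T_{b_r}$ on reduced words, gives the usual Hecke multiplication rules ($\T_a\T_w = \T_{w_aw}$ when $\ell(w_aw) > \ell(w)$, else $\T_a\T_w = (v-1)\T_w + v\,\T_{w_aw}$). Grouping the sum $\T_a\,\mc{P}$ by coset pairs $\{w, w_aw\}$ yields $\T_a\,\mc{P} = v\,\mc{P}$, which after unpacking $\T_a = \c(\av)(w_a\star\,\cdot) + \b(\av)$ becomes
\[
w_a\star\mc{P}(e^\lv) \;=\; \frac{1 - v e^{\m(\av)\av}}{1 - v e^{-\m(\av)\av}}\,\mc{P}(e^\lv).
\]
A factor-by-factor check (using $\Delta_v\in\nice$ and that $w_a$ permutes $R_+^\vee\setminus\{\av\}$ while sending $\av\mapsto -\av$) shows that $\Delta_v$ transforms by the identical cocycle, so together with the semi-linearity of Lemma~\ref{cg-basic-prop}(1) this proves $\mc{P}(e^\lv)/\Delta_v$ is $W\star$-invariant.

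For the right-hand side, write $S(\lv)$ for the numerator and $\mu^\vee(u) := \sum_{\bv\in R^\vee(u)}\m(\bv)\bv$. Reindexing $w\mapsto w_aw$ in $w_a\star S(\lv)$, using standard inversion-set combinatorics (a case analysis on the sign of $w^{-1}\av$ yields $w_a\mu^\vee((w_aw)^{-1}) = \mu^\vee(w^{-1}) - \m(\av)\av$ in either case) together with the sign flip $(-1)^{\ell(w_aw)} = -(-1)^{\ell(w)}$ and the action identity $w_a\star(w\star e^\lv) = (w_aw)\star e^\lv$, gives $w_a\star S(\lv) = -e^{\m(\av)\av}\,S(\lv)$. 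Since $w_a(\Delta) = -e^{\m(\av)\av}\,\Delta$ by direct computation, $S(\lv)/\Delta$ is likewise $W\star$-invariant. To conclude equality, I would clear denominators: both $\Delta\,\mc{P}(e^\lv)$ and $\Delta_v\,S(\lv)$ lie in $\C_v[\Lv]$, and any $W\star$-invariant multiple of $\Delta_v$ is determined by its expansion on the dominant chamber. For $\lv$ regular dominant, Claim~\ref{T:pol} ensures every $\T_w(e^\lv)$ with $w\neq 1$ is supported on weights strictly below $\lv$, so the $e^\lv$-coefficient of $\mc{P}(e^\lv)$ is $1$; the $w=1$ contribution on the right produces the same leading coefficient, and matching continues down the dominance ordering. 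The general dominant case extends by polynomial continuation in the formal parameters.

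The main obstacle is the combinatorial identity $w_a\star S(\lv) = -e^{\m(\av)\av}\,S(\lv)$. In the non-metaplectic setting this is familiar antisymmetry of the Weyl numerator, but here the twisted $\star$-action couples the monomial prefactors with the Gauss-sum parameters $\gf_k$ arising inside each $w\star e^\lv$; one must therefore track these $\gf_k$-factors carefully through the coset pairing and verify that they recombine to produce exactly the expected cocycle. A clean way to organize this is to pair up $\{w, w_aw\}$ and reduce the verification to the rank-one identity already implicit in the Hecke quadratic relation of the first paragraph.
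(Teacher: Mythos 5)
The first two thirds of your argument are sound: the quadratic relation $\T_a^2=(v-1)\T_a+v$ does follow from the computation you sketch, the pairing $\{w,w_aw\}$ gives $\T_a\mc{P}=v\mc{P}$ and hence the cocycle $w_a\star\mc{P}(e^{\lv})=\frac{1-ve^{\m(\av)\av}}{1-ve^{-\m(\av)\av}}\mc{P}(e^{\lv})$, and the antisymmetry $w_a\star S(\lv)=-e^{\m(\av)\av}S(\lv)$ is indeed routine, because the prefactors $e^{-\m(\bv)\bv}$ lie in $\C_v[\Lv_0]$ and pull out semi-linearly by Lemma \ref{cg-basic-prop}(1); contrary to your closing remark, no Gauss-sum bookkeeping is needed there, so you have mislocated the main obstacle. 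The genuine gap is the final identification step. From the two cocycle identities you only get that $\mc{P}(e^{\lv})/\Delta_v$ and $S(\lv)/\Delta$ are both $\star$-invariant; to conclude they are equal you assert that ``any $W\star$-invariant multiple of $\Delta_v$ is determined by its expansion on the dominant chamber'' and that ``matching continues down the dominance ordering.'' Neither claim is proved, and neither is obvious: unlike the ordinary $W$-action, $\star$-invariants are not spanned by monomial orbit sums (the constant $1$ is not even $\star$-invariant), the elements involved are rational so ``coefficients'' only make sense after fixing an expansion direction, and the quotient of two $\star$-invariants need not lie in $\nice$, so Lemma \ref{cg-basic-prop}(1) does not let you divide one invariant by the other. ``Polynomial continuation in the formal parameters'' also cannot repair the passage from regular to non-regular dominant $\lv$, since regularity of $\lv$ has nothing to do with $v,\gf_i$.

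This missing normalization is exactly where the actual proofs do real work. The argument the paper alludes to (methods of \cite{pat-whit}, Lemma 6.4 and Proposition 6.5, adapted as in \cite{cgp}) is carried out at the level of a twisted group-algebra identity and only yields \eqref{P:cs-op}, i.e.\ the formula up to an unknown $W$-invariant factor $\mathfrak{m}$ — precisely the proportionality your leading-term matching is supposed to, but does not, pin down. Showing $\mathfrak{m}=1$ requires extra input: in \cite{cgp} a computation with the long word $w_0$, and in Remark \ref{whit:spy} a comparison with the spherical function together with Macdonald's Poincar\'e-series identity \eqref{zero-case}. Your proposal would be complete if you either carried out the $w_0$-coefficient computation (matching the coefficient of the long word in the expansion of $\mc{P}$ against $\frac{\Delta_v}{\Delta}\,e^{-\sum_{\bv>0}\m(\bv)\bv}$ times the appropriate factor), or proved a structure theorem for $\star$-invariants justifying your ``determined by dominant coefficients'' claim; as written, the conclusion does not follow.
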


\tpoint{Remarks} In the rank one case, the identity follows immediately from \eqref{rk1:T}; in the non-metaplectic case, the above result appears essentially as \cite[\S 5.5]{mac:aff}, after using the relation \cite[(6.6)]{pat-whit}. 
The proof of the above Theorem (see \cite[Theorem 4]{cgp}), which makes use of the long word of the Weyl group, can also be viewed in the following way. First, using methods similar to \cite[Lemma 6.4, Proposition 6.5]{pat-whit} (modified suitably using the metaplectic identities of \cite{cgp}), one may prove a slightly weaker statement without mention of the long word, 
 \be{P:cs-op} \mc{P}(e^{\lv}) = {\mathfrak{m}}\cdot \frac{\Delta _v}{\Delta } \sum_{w \in W} (-1)^{\ell(w)} \left( \prod_{\bv \in R^{\vee}(w^{-1}) } e^{-\m(\bv) \bv} \right) w \star e^{\lv}  ,\ee
where the factor $\mathfrak{m} \in \nice $ is some $W$-invariant element. Using the long word, one can then show $\mf{m}=1.$ See also Remark \ref{whit:spy}.

\section{The Iwahori-Whittaker Functions and Statement of Main Result}

In this section, we formulate the main result of this paper and explain its proof in the rank 1 case. 

\spoint The Whittaker function is defined via the integral \be{whit-def} \W(g) = \int_{U^-} \Pshi_{\rho}(g u^-) \psi(u^-) du^-, \text{ for } g \in \tG \ee where $du^-$ is the Haar measure on $U^-$ assigning $U^-_{\O}$ volume 1, and $\Pshi_{\nu}$ is defined for any $\nu \in \Lambda$ in \eqref{Phi}.  The same argument as in the non-metaplectic case shows this integral is a well-defined element in $\C[\Lv],$ i.e., $\W: \tG \rr \C[\Lv].$ From the definition of $\Pshi_{\rho}$, one has $\W(\zeta g) = \iota(\zeta) g$ for $g \in \tG, \zeta \in \bmu$ (recall $\iota$ was a fixed embedding of $\bmu$ in $\C^*$)  Together with the following standard fact, we see that $\W(g)$ is determined by the values $\W(\pi^{\lv})$ for $\lv \in \Lv_+.$

\begin{nprop} \label{p:W-inv} The function $\W(g)$ satisfies the following transformation rule, \be{W-inv} \W(k g u^- ) = \psi(u^-)^{-1} \W(g). \ee Moreover, if $g = \pi^{\lv}$ with $\lv \notin \Lv_+$ then $\W(\pi^{\lv})=0.$ \end{nprop}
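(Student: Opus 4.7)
The plan is to treat the two claims separately; both reduce to elementary manipulations once the definition of $\Pshi_{\rho}$ and the structure of the Whittaker character $\psi$ are in hand.

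For the transformation property $\W(kgu^-)=\psi(u^-)^{-1}\W(g)$, first observe that $\Pshi_{\rho}$ as defined in \eqref{Phi} depends on its argument only through the $\tA$-component of the Iwasawa decomposition, and is therefore manifestly left-invariant under $\tK$; this at once gives $\W(kg)=\W(g)$. For the right $U^-$-equivariance, I substitute $u^-\mapsto (u_0^-)^{-1}v^-$ in the integral defining $\W(gu_0^-)$: the Haar measure on $U^-$ is translation-invariant, and since $\psi$ is a character, $\psi((u_0^-)^{-1}v^-)=\psi(u_0^-)^{-1}\psi(v^-)$, so the identity $\W(gu_0^-)=\psi(u_0^-)^{-1}\W(g)$ falls out immediately. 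Composing these two statements yields the full equivariance.

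For the vanishing at non-dominant coweights, assume $\lv\notin \Lv_+$ and fix a simple root $a\in\Pi$ with $m:=-\la\lv,a\ra>0$. The plan is to exhibit an identity $\W(\pi^{\lv})=c\cdot \W(\pi^{\lv})$ with $c\neq 1$, forcing $\W(\pi^{\lv})=0$. For any $s\in\O$ the element $x_{-a}(s)$ lies in $\tU_{-a}(\O)\subset\tK$, so by the left-$\tK$-invariance just established,
\[
\W(\pi^{\lv})=\W(x_{-a}(s)\,\pi^{\lv}).
\]
The torus--root conjugation relation in $\tG$ reads
\[
x_{-a}(s)\,\pi^{\lv}=\pi^{\lv}\,x_{-a}(\pi^{-m}s),
\]
with no metaplectic central correction (see the obstacle remark below). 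Applying right $U^-$-equivariance from the first part then yields
\[
\W(\pi^{\lv})=\psi(x_{-a}(\pi^{-m}s))^{-1}\W(\pi^{\lv})
\]
for every $s\in\O$. As $s$ varies over $\O$, the argument $\pi^{-m}s$ sweeps over $\pi^{-m}\O\supseteq \pi^{-1}\O$; since $\psi$ has conductor $\O$ (see \S\ref{psi-sec} and \eqref{van-psi}), there exists $s\in\O$ with $\psi(x_{-a}(\pi^{-m}s))\neq 1$, which forces $\W(\pi^{\lv})=0$.

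The only nontrivial point is justifying that the conjugation $\pi^{-\lv}x_{-a}(s)\pi^{\lv}$ in $\tG$ carries no factor in $\bmu$. This follows from the Steinberg--Matsumoto presentation recalled in \S\ref{spoint:univ:ext}, in which the cocycles $\c_{\av}(s,t)$ arise only from products of two torus elements as in \eqref{h:cen}; the adjoint action of a torus element on a root one-parameter subgroup is cocycle-free already in $\bE$ and hence descends unchanged to $\tG$. Beyond this minor bookkeeping, both claims are standard direct integral manipulations.
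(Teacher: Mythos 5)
Your proof is correct and is exactly the standard argument that the paper invokes without proof (it states the proposition as a ``standard fact,'' as in the non-metaplectic case of \cite{pat-whit}): left $\tK$-invariance of $\Pshi_{\rho}$ plus a change of variables for the right $(U^-,\psi)$-equivariance, and then the scalar identity $\W(\pi^{\lv})=\psi(\pi^{-m}s)^{-1}\W(\pi^{\lv})$ obtained by commuting $x_{-a}(s)\in\tU_{-a}(\O)\subset\tK$ past $\pi^{\lv}$ for a simple root $a$ with $\la\lv,a\ra<0$. You also correctly handle the only genuinely metaplectic point, namely that the torus--unipotent conjugation is cocycle-free because it already holds in Steinberg's presentation of $\bE$ and descends to $\tG$.
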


A simple change of variables \cite[Eq. (2.15)]{pat-whit} shows that \be{W-cov} \W(\pi^{\lv}) = \Pshi_{-\rho}(\pi^{\lv}) \int_{U^-} \Pshi_{\rho}(u^-) \psi_{\lv}(u^-) du^-, \ee where $\psi_{\lv}(x) = \psi(\pi^{-\lv} x \pi^{\lv})$ for $x \in U^-.$  

\spoint If $\Gamma$ is any group and $X$ is a right $\Gamma$-set and $Y$ is a left $\Gamma$-set, we define \be{fiber} X \times_{\Gamma} Y =  X \times Y  / \sim \ee where $\sim$ is the equivalence relation generated by $(x\gamma, \gamma^{-1} y) \sim (x, y)$ for $x \in X, y \in Y,$ and $\gamma \in \Gamma.$ For each $w \in W, \lv \in \Lv$ multiplication induces a natural map \be{m_w} m_{w, \lv}: \bmu U \dot{w} \tI^- \times_{\tI^-} \tI^- \pi^{\lv} U^- \rr \tG. \ee Let $\lv \in \Lv_+$ and $\mv \in \Lv.$ For each $x \in m_{w, \lv}^{-1}(\pi^{\mv}),$ there are natural projections \be{proj-fib} \begin{array}{lcr} \nn: m_{w, \lv}^{-1}(\pi^{\mv}) \rr U^-_{\O} \setminus U^- & \text{ and } &  \z_w: m_{w, \lv}^{-1}(\pi^{\mv}) \rr \bmu \end{array} \ee defined as follows. Let $x \in m_{w, \lv}^{-1}(\pi^{\mv})$ have representative $(a, b)$ with $a \in \bmu U \dot{w} \tI^-$ and $b \in \tI^- \pi^{\lv} U^-$ some chosen representatives. Writing $b = i \pi^{\lv} u^-$ with $i \in \tI^-,$ $u^- \in U^-$ one can verify that since $\lv \in \Lv_+$ the class of $u^- \in U^-_{\O} \setminus U^-$ is well-defined; this class is denoted by $\nn(x).$  Writing $a= \zeta u \dot{w} i$ with $\zeta \in \bmu,$ $u \in U^-,$ $i \in \tI^-,$ the element $\zeta$ is well-defined and independent of the representatives $(a, b)$ taken for $x.$ This $\zeta\in \bmu$ is denoted by $\z_w(x).$ Using these maps, we can define the \emph{Iwahori-Whittaker functions} as the formal generating series \be{formal-w} \W_{w, \lv} := \sum_{\mv \in \Lv} e^{\mv} q^{ \la \rho, \mv \ra} \left( \sum_{x \in m_{w, \lv}^{-1}(\pi^{\mv}) } \psi(\nn(x)) \iota(\z_w(x)) \right). \ee The same argument as in \cite[Lemma 4.4]{pat-whit} shows that we have a decomposition \be{W:dec} \W(\pi^{\lv}) =  q^{-\la 2 \rho, \lv \ra} \sum_{ w \in W} \W_{w, \lv}, \ee where we note that the factor of $q^{-\la 2 \rho, \lv \ra}$ must be introduced here for the same reason as \cite[\S 2.7]{pat-whit}.

\tpoint{Remark} Suppose $w=1,$ and pick $\mv \in \Lv$ such that $m_{1, \lv}^{-1}(\pi^{\mv}) \neq \emptyset.$ So $\pi^{\mv} \in \bmu U \tI^- \pi^{\lv} U^-,$ which forces $\mv=\lv,$ and hence $\W_1(\pi^{\lv})$ is a scalar multiple of $e^{\lv}.$ An easy computation shows $m_{1, \lv}^{-1}(\pi^{\lv})$ consists of a single element and so $\W_1(\pi^{\lv}) = q^{ \la \rho, \lv \ra} e^{\lv}.$ We can also write this as 
\be{W:1} \W_1(\pi^{\lv}) = \Pshi_{\rho}(\pi^{\lv} )\int_{U^-_{\O}} \Pshi_{\rho}(u^-) \psi_{\lv}( u^-) du^-, \ee using the fact that $\lv \in \Lv_+.$ In fact, more generally one may show 
\be{W:1w} \W_w(\pi^{\lv}) = \Pshi_{\rho}(\pi^{\lv} )\int_{U^{-,w}} \Pshi_{\rho}(u^-) \psi_{\lv}( u^-) du^-, \ee where the sets  \be{U-w} U^{-, w} = \{ x \in U^- \mid x \in \tI^- w^{-1} \tB \} \ee are the analogues of the ones defined in \cite[(2.6)]{pat-whit} (note that what is defined there as $U^{-, w}$ is our $U^{-, w^{-1}}$). Using the Iwasawa decomposition of $G$ and Bruhat decomposition for the group $G_\kk$ over the residue field, one obtains a decomposition $U^- = \sqcup_{w \in W}  U^{-, w}$ which underlies \eqref{W:dec}. In the case of $SL(2)$, we have $U^- = U^{-,1} \sqcup U^{-, w_a}$ where $U^{-,1} = U_{-a, \O}$ and $U^{-, w_a}= U_{-a}(\mc{K})  \setminus U_{-a, \O}$ (where $a$ is the unique simple root). 

\spoint Recall for $w=1$ we defined $\T_1: \C_v[\Lv] \rr \C_v[\Lv]$ as the identity operator. Hence from \eqref{W:1}, we have \be{W:1:T} \W_{1, \lv} = q^{ \la \rho, \lv \ra} e^{\lv} = q^{  \la \rho, \lv \ra} \T_1(e^{\lv}). \ee The main result of this paper is the following,

\begin{nthm} \label{main-rec} Let $w,w' \in W$ and $a \in \Pi$ so that $w= w_a w'$ with $\ell(w) = \ell(w')+1,$ we have \be{T:W-rec} \T_a(\W_{w', \lv}) = \W_{w, \lv} \ee \end{nthm}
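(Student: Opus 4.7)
The plan is to analyze the fibers of the multiplication map $m_{w,\lv}$ from \eqref{m_w} and reduce the identity to a rank-one computation in the metaplectic $\mathrm{SL}_2$ associated with the simple root $a$, following the strategy of \cite{pat-whit} and the $\mathrm{SL}_2$ model computation to be carried out in \S \ref{rk1case}. Since $\ell(w) = \ell(w')+1$ with $w = w_a w'$, we may choose a reduced decomposition so that $\dot{w} = \dot{w}_a \dot{w}'$ in $\tG$, and the length condition forces $w'^{-1} a > 0$, which makes the unipotent factorization at the root $a$ compatible with conjugation by $\dot{w}'$.

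First I would rewrite each Iwahori-Whittaker function using \eqref{W:1w},
\begin{equation*}
\W_{w'',\lv} \;=\; \Pshi_\rho(\pi^{\lv})\int_{U^{-,w''}} \Pshi_\rho(u^-)\,\psi_\lv(u^-)\,du^-, \qquad w'' \in \{w, w'\},
\end{equation*}
and factor the negative unipotent as $U^- = U_{-a}\cdot U^{-,a}$, where $U^{-,a}$ is generated by the negative roots distinct from $-a$. Under this factorization, and using $w'^{-1} a > 0$, the passage from $U^{-,w'}$ to $U^{-,w}$ is controlled by the rank-one $\mathrm{SL}_2$ calculation in the $a$-direction: the $U_{-a}$-coordinate $x_{-a}(s)$ ranges over $\O$ for $U^{-,w'}$ and over $\{s\in \mc{K}\mid \valu(s)<0\}$ for $U^{-,w}$, with the complementary $U^{-,a}$-integration matching up after conjugation by $\dot{w}_a$.

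For $\valu(s) = -k < 0$ I would invoke the rank-one Iwasawa decomposition in the metaplectic $\mathrm{SL}_2$ attached to $a$, writing $x_{-a}(s)$ as a product of a torus element $h_a(s^{-1})$, a unipotent element $x_a(s) \in U_a$, an element of $\tK$, and a Hilbert-symbol cocycle $\zeta(s) \in \bmu$. Substituting into the integral: the $U_a$-piece is killed by $\psi_\lv$-invariance (since $\lv \in \Lv_+$), the $\tK$-piece is absorbed via the transformation rule for $\Pshi_\rho$, and the torus element $h_a(s^{-1})$ shifts the effective coweight in the residual $U^{-,a}$-integration. Summing over $k > 0$ and collecting residues modulo $\m(\av)$, one obtains a finite expansion which, when separated into geometric-series terms and a Gauss-sum term, should match the polynomial form \eqref{Ta:i} of $\T_a(e^{\mv'})$ for each coweight $\mv'$ appearing in $\W_{w',\lv}$.

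The main obstacle will be the bookkeeping of the metaplectic cocycle $\z$. The Hilbert symbols produced by $\zeta(s)$, together with the cross-term $c_{\av}(s^{-1},\pi^{\la\mv',a\ra})^{\Q(\av)}$ arising when $h_a(s^{-1})$ is commuted past $\pi^{\mv'}$ inside $\W_{w',\lv}$, must conspire to reproduce precisely the Gauss sum $\gf_{\Q(\av)+\B(\mv',\av)}$ of \eqref{c:act}. This is where the assumption $q \equiv 1 \bmod 2n$ intervenes via the identity $(\pi^a,\pi^b)=1$ of \S \ref{s:hilb-sym}, allowing the relevant local integrals to be identified with the Gauss sums of \S \ref{ex:gsum}. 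Once the rank-one identity is pinned down, linearity in $\mv'$ assembles the contributions termwise into $\T_a(\W_{w',\lv}) = \W_{w,\lv}$, completing the proof.
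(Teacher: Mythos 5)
Your overall strategy (reduce to a rank-one metaplectic $\mathrm{SL}_2$ computation, with the Gauss-sum/Hilbert-symbol bookkeeping as in the model case) is in the right spirit, but the step on which everything rests is asserted rather than proved, and in its natural reading it is false. You claim that under the factorization $U^- = U_{-a}\cdot U^{-,a}$ the cells decompose so that ``the $U_{-a}$-coordinate ranges over $\O$ for $U^{-,w'}$ and over $\{\valu(s)<0\}$ for $U^{-,w}$, with the complementary $U^{-,a}$-integration matching up.'' The cells $U^{-,w}=\{x\in U^-\mid x\in \tI^- w^{-1}\tB\}$ are not product sets of this form: the condition on the $-a$-coordinate is coupled to the other coordinates. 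Concretely, in $\mathrm{SL}_3$ with simple roots $a,b$, take $x=x_{-a}(s)x_{-b}(u)$ with $\valu(s)<0$ and $u\in\O^*$. Using \eqref{u:iw} and commuting, $x=x_a(s^{-1})\,x_{-(a+b)}(\pm su)\,w_a h_a(s)x_a(s^{-1})$, so $x\in U^{-,w_a}$ precisely when $\valu(su)\geq 0$; if $\valu(su)<0$ a further rank-one decomposition at $a+b$ places $x$ in $I^-(w_a w_b)\tB$, i.e.\ in $U^{-,w_bw_a}$. Thus membership in $U^{-,w_a}$ versus a longer cell depends jointly on $s$ and $u$, and no decoupled description ``$\valu(s)<0$, complementary part as for $w'$'' (even after a twist by $\dot w_a$) can hold. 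Since this coupling is exactly the cross-root interaction your reduction must control, the proposal as written does not yet contain a proof; making the domain-side decomposition precise for general $w'$ is essentially as hard as the theorem itself.

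The paper circumvents this by never decomposing the domain $U^{-,w}$ directly. Instead it encodes $\W_{w,\lv}$ as $\Psi(\Av_{\psi,\lv}(\tve_w))$ (Lemma \ref{whit:as-av}), applies the rank-one intertwiner $I_a$ -- an integral over the \emph{positive} root group $U_a$ -- on the Iwahori side, and uses the commutation $I_a\circ\Av_{\psi,\lv}=\Av_{\psi,\lv}\circ I_a$ \eqref{Ia-av}. The rank-one metaplectic input (your Iwasawa decomposition, Gauss sums, and the role of $q\equiv 1 \bmod 2n$) is confined to two computations: Lemma \ref{lem-int-vw}, giving $I_a(\tve_{w'})=\tve_w-\b(\av)\tve_{w'}$, which produces the right-hand side $\W_{w,\lv}-\b(\av)\W_{w',\lv}$; and Lemma \ref{I_a:spw}, giving $I_a(\hs_{\psi,\xv})=\Gamma^I_{a,\xv}+\Gamma^{II}_{a,\xv}$ on the big cell, which after applying $\Psi$ yields $\c(\av)\,w_a\star\W_{w',\lv}$ via \eqref{c:act}. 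Comparing the two sides gives \eqref{T:W-rec}. If you want to salvage your approach, the statement you would need to prove is precisely an analogue of Lemma \ref{lem-int-vw}/\ref{I_a:spw} packaged as a fibration statement for the multiplication maps $m_{w,\lv}$, which is where all the coupling illustrated above gets absorbed.
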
 An induction using \eqref{W:1:T} shows,

\begin{ncor} \label{W:T} Let $w \in W$ and $\lv \in \Lv_+.$ For any reduced decomposition $w=w_{b_1} \cdots w_{b_r}$ with $b_{1}, \ldots, b_{r} \in \Pi,$  \be{W:T-w} \W_{w, \lv} = q^{  \la \rho, \lv \ra} \T_{b_{1}} \cdots \T_{b_{r}}(e^{\lv}). \ee 
\end{ncor}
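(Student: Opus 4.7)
The plan is to proceed by a direct induction on the length $\ell(w) = r$, leveraging Theorem \ref{main-rec} as the inductive step and the identity \eqref{W:1:T} as the base case. Both ingredients are already in place in the excerpt, so Corollary \ref{W:T} is essentially a formal consequence; the only point requiring a moment of care is that the corollary asserts the answer is the \emph{same} for every reduced decomposition, and this must either be extracted from the invariance of $\W_{w,\lv}$ itself or cited from the well-definedness result for $\T_w$ (Theorem \ref{well-def}).

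For the base case $\ell(w)=0$, i.e.\ $w=1$, the identity \eqref{W:1:T} reads $\W_{1,\lv} = q^{\la \rho, \lv\ra} e^{\lv} = q^{\la \rho, \lv\ra} \T_1(e^{\lv})$ since $\T_1$ is by definition the identity operator on $\C_v[\Lv]$. For the inductive step, suppose the formula holds for every element of length at most $r-1$. Fix $w \in W$ with reduced decomposition $w = w_{b_1} w_{b_2} \cdots w_{b_r}$ and set $w' = w_{b_2} \cdots w_{b_r}$, so that $\ell(w') = r-1$ and $w = w_{b_1} w'$ with $\ell(w) = \ell(w') + 1$. By the induction hypothesis applied to $w'$,
\begin{equation*}
\W_{w', \lv} \;=\; q^{\la \rho, \lv\ra}\, \T_{b_2} \cdots \T_{b_r}(e^{\lv}).
\end{equation*}
Applying Theorem \ref{main-rec} with $a = b_1$ gives $\T_{b_1}(\W_{w', \lv}) = \W_{w, \lv}$, and $\T_{b_1}$ is $\C$-linear, so
\begin{equation*}
\W_{w, \lv} \;=\; q^{\la \rho, \lv\ra}\, \T_{b_1}\T_{b_2} \cdots \T_{b_r}(e^{\lv}),
\end{equation*}
which is exactly the claimed formula for this particular reduced decomposition.

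There is no genuine obstacle beyond observing independence of the reduced decomposition. One way is to invoke Theorem \ref{well-def}, which asserts that $\T_w := \T_{b_1}\cdots \T_{b_r}$ depends only on $w$, making the claim \eqref{W:T-w} unambiguous. A second, more self-contained option is to notice that since $\W_{w,\lv}$ is defined intrinsically from $w$ (via the map $m_{w,\lv}$ in \eqref{m_w}) and does not reference any choice of reduced word, the identity just established forces $\T_{b_1}\cdots \T_{b_r}(e^{\lv})$ to coincide for all reduced decompositions of $w$, at least when evaluated on dominant $e^{\lv}$. This is precisely the point made in the introduction about Remark \ref{rmk:DL-braid}: the braid relations for the metaplectic Demazure-Lusztig operators on dominant coweights fall out as a corollary of the recursion, rather than being an input to it.
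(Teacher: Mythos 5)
Your induction is exactly the paper's argument: the base case is \eqref{W:1:T} and the inductive step is Theorem \ref{main-rec} applied with $a=b_1$ after stripping the leftmost simple reflection, which is all the paper means by ``an induction using \eqref{W:1:T}.'' Your closing observation that independence of the reduced word follows from the intrinsic definition of $\W_{w,\lv}$ rather than being an input is also precisely the content of Remark \ref{rmk:DL-braid}, so nothing is missing.
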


\tpoint{Remark} \label{rmk:DL-braid} As the left-hand side of \eqref{W:T-w} is independent of the choice of reduced decomposition of $w$, the same can be concluded for the right-hand side: for $w \in W$ and any reduced decomposition $w=w_{b_1} \cdots w_{b_r}$ as above, the expression \be{T:w-def-applied} \T_w(e^{\lv}):=  \T_{b_{1}} \cdots \T_{b_{r}}(e^{\lv}), \, \lv \in \Lv_+ \, \ee  is independent of the reduced decomposition of $w.$ We have not yet defined $\T_w(e^{\mv})$ for a general $\mv \in \Lv;$ in fact, as we show in Appendix \ref{app:braid}, the formula \eqref{T:w-def-applied} considered with an arbitrary $\lv \in \Lv$ yields a well-defined element in $\C[\Lv].$ Then Corollary \eqref{W:T} can be restated as follows, \be{W:T-1} \W_{w, \lv} = q^{  \la \rho, \lv \ra} \T_w(e^{\lv}) \text{ for } w \in W, \lv \in \Lv_+. \ee

\spoint \label{met-cs} In this paragraph, we again assume Corollary \ref{CG-is-action} and Theorem \ref{well-def}, so that the algebraic formula \eqref{P:cs} of Theorem \ref{cgp-main} holds. Using this theorem and \eqref{W:T-1}, we obtain the Metaplectic Casselman-Shalika formula that was obtained using different techniques in \cite{ch:of, mac-met}.

\begin{ncor} \label{main-cs-fin}\cite{ch:of, mac-met} For each $\lv \in \Lv_+,$ we have the equality \be{main-cs-fin-1} \W(\pi^{\lv}) &=& q^{- \la \rho, \lv \ra} \prod_{a > 0}  \frac{1 - q^{-1} e^{-\m(\av) \av}}{1 - e^{- \m(\av) \av} } \, \ \sum_{w \in W} (-1)^{\ell(w)} \left( \prod_{\bv \in R^{\vee}(w^{-1}) } e^{-\m(\bv) \bv} \right) w \star e^{\lv}, 
\ee where the action $\star$ is the one defined in \eqref{cg-act} with $v=q^{-1}.$ \end{ncor}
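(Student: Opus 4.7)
The plan is that this corollary is essentially a direct assembly of three already established ingredients, with a specialization of the formal parameters at the end. The three ingredients are: the decomposition of $\W(\pi^{\lv})$ into Iwahori-Whittaker pieces (equation \eqref{W:dec}), the identification of each piece with a Demazure-Lusztig operator applied to $e^{\lv}$ (Corollary \ref{W:T}, restated as \eqref{W:T-1}), and the closed-form evaluation of the symmetrizer $\mc{P} = \sum_{w \in W} \T_w$ on a dominant coweight provided by Theorem \ref{cgp-main}. All of these are in place by the time this corollary is stated, so the proof should amount to a couple of substitutions.

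Concretely, I would first substitute \eqref{W:T-1} into \eqref{W:dec} to obtain
\[
\W(\pi^{\lv}) \;=\; q^{-\la 2\rho, \lv \ra}\sum_{w\in W} \W_{w,\lv} \;=\; q^{-\la 2\rho,\lv\ra}\cdot q^{\la \rho,\lv\ra}\sum_{w\in W}\T_w(e^{\lv}) \;=\; q^{-\la \rho,\lv\ra}\,\mc{P}(e^{\lv}).
\]
This reduces the problem to evaluating $\mc{P}(e^{\lv})$. Applying Theorem \ref{cgp-main} (equation \eqref{P:cs}) directly gives an expression for $\mc{P}(e^{\lv})$ as $(\Delta_v/\Delta)$ times the alternating sum over $w \in W$ involving the $\star$-action. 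Finally, specializing the formal parameters as in \eqref{par:choices}, namely $v = q^{-1}$ and $\gf_k = \mathbf{g}_k$, the prefactor $\Delta_v/\Delta$ becomes $\prod_{a > 0}(1 - q^{-1}e^{-\m(\av)\av})/(1 - e^{-\m(\av)\av})$, producing exactly the claimed formula for $\W(\pi^{\lv})$.

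There is no serious obstacle at this stage; the real work is all upstream. What must be kept in mind is that the application of \eqref{W:T-1} silently uses the well-definedness of $\T_w$ (Theorem \ref{well-def}), and the application of Theorem \ref{cgp-main} uses both Theorem \ref{well-def} and Corollary \ref{CG-is-action} (so that the $w\star -$ on the right-hand side is actually an action). Both assumptions are explicitly in force in \S\ref{met-cs}, so invoking them here is legitimate. One might also remark that this gives a derivation of the Chinta-Offen / McNamara formula by a route quite different from theirs: rather than intertwiner calculations on the principal series, the identity falls out of the recursion \eqref{T:W-rec} combined with the combinatorial identity \eqref{P:cs}.
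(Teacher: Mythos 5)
Your proposal is correct and is essentially the paper's own argument: \S\ref{met-cs} derives Corollary \ref{main-cs-fin} precisely by combining \eqref{W:dec} with \eqref{W:T-1} to get $\W(\pi^{\lv}) = q^{-\la \rho,\lv\ra}\mc{P}(e^{\lv})$ and then applying Theorem \ref{cgp-main} under the specialization \eqref{par:choices}, with Theorem \ref{well-def} and Corollary \ref{CG-is-action} assumed exactly as you note.
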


\tpoint{The rank one case} \label{rk1case}  Let us conclude this section with a proof of Theorem \ref{main-rec} in the case of $SL(2),$ where we have two Iwahori-Whittaker functions $\W_1$ and $\W_{w_a}:= \W_a.$ We have already seen that $\W_1(\pi^{\lv}) =  q^{ \la \rho ,\lv \ra}  \T_1(e^{\lv})$ and so we turn to computing $\W_a(\pi^{\lv}),$ which is given by the following integral, \be{Wa} \label{Wa} \W_{a}(\pi^{\lv})= \Pshi_{ \rho}( \pi^{\lv}) \int_{U_{-a}(< 0)} \Pshi_{\rho} (x_{-a} ) \psi_{\lv}(x_{-a} ) dx_{-a}, \ee where $U_{-a}(< 0) = U_{-a} \setminus U_{-a, \O}.$  For each $k \in \zee,$ let  $U_{-a}[k]= \{ x_{-a}(s) \mid \valu(s) = k \}.$ Then one has   \be{W:sum} \W_a(\pi^{\lv}) &=& \Pshi_{ \rho}(\pi^{\lv}) \sum_{k > 0 }  \int_{U_{-a}[-k]} \Pshi_{\rho} (x_{-a} ) \psi_{\lv}(x_{-a} ) dx_{-a} 
= \Pshi_{\rho}(\pi^{\lv})  \sum_{k > 0 } \mc{I}_a(k), \ee where we have introduced the notation, \be{I-k}  \mc{I}_a(k):= \int_{U_{-a}[-k]} \Pshi_{\rho} (x_{-a} ) \psi_{\lv}(x_{-a} ) dx_{-a}. \ee

\begin{nlem} \label{I-k} With the notations introduced above, we have 
\be{I-k-cases} \mc{I}_a(k):= \begin{cases}  e^{- k \av} (1- q^{-1}) & \text{ if } k \leq \la \lv, a \ra \text{ and } k \Q(\av) \equiv 0 \mod r \\ e^{ -(\la \lv, a \ra+ 1) \av} \, q^{-1} \mf{g}_{ \Q(\av)( \la \lv, a \ra +1)  } & \text{ if } k = \la \lv, a \ra + 1  \\
0 & \text{ otherwise} \end{cases} \ee \end{nlem}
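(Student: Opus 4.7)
The plan is to parametrize the shell $U_{-a}[-k]$ by $s = \pi^{-k} u'$ with $u' \in \O^*$, to perform the metaplectic Iwasawa decomposition of $x_{-a}(s)$ explicitly, and then to reduce $\mc{I}_a(k)$ to an $\O^*$-integral that can be evaluated via the Gauss-sum identities of \S \ref{psi-sec} and \S \ref{ex:gsum}. For the Iwasawa step, I would begin from the Steinberg identity
\begin{equation*}
x_{-a}(s) \;=\; x_a(s^{-1})\, h_a(-s^{-1})\, w_a(1)\, x_a(s^{-1}),
\end{equation*}
which applies because $\valu(s^{-1}) = k \geq 1$, so the outer copies of $x_a(s^{-1})$ land in $\tU_a(\O) \subset \tK$. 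Commuting $w_a(1)$ past $h_a(-s^{-1})$ (which, up to central factors, turns $h_a(-s^{-1})$ into $h_a(-s)$) and absorbing the remaining $x_a(s^{-1})$ and $w_a(1)$ into $\tK$ yields $\iw_{\tA}(x_{-a}(s)) \equiv h_a(-s) \pmod{\tA_\O}$. Writing $-s = -\pi^{-k} u'$ and feeding this into the Matsumoto cocycle $c_{\av}(\cdot, \cdot) = (\cdot, \cdot)^{\Q(\av)}$, together with the triviality of Hilbert symbols of the form $(-1, \cdot)$ and $(\pi^a, \pi^b)$ afforded by $q \equiv 1 \mod 2n$ (cf.\ the footnote in \S \ref{s:hilb-sym}), one extracts $\ln(x_{-a}(s)) = -k\av$ and $\z(x_{-a}(s)) = (u', \pi)^{-k\Q(\av)}$.

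Substituting into \eqref{Phi} and using $\la \rho, \av \ra = 1$ for $SL_2$ gives $\Pshi_\rho(x_{-a}(\pi^{-k} u')) = q^{-k}\, e^{-k\av}\, \iota\bigl((u', \pi)^{-k\Q(\av)}\bigr)$, while the torus conjugation $\pi^{-\lv} x_{-a}(s) \pi^{\lv} = x_{-a}(\pi^{\la \lv, a \ra} s)$ identifies $\psi_{\lv}(x_{-a}(\pi^{-k} u')) = \psi(\pi^{\la \lv, a \ra - k} u')$. The change of variable $ds = q^k\, du'$ cancels the factor $q^{-k}$ and produces
\begin{equation*}
\mc{I}_a(k) \;=\; e^{-k\av} \int_{\O^*} (u', \pi)^{-k\Q(\av)}\, \psi\bigl(\pi^{\la \lv, a\ra - k}\, u'\bigr)\, du'.
\end{equation*}

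Setting $m := \la \lv, a \ra - k$, the three cases of the lemma correspond respectively to $m \geq 0$, $m = -1$, and $m \leq -2$. In the first, the additive factor is trivial on $\O^*$, so the integral of the unramified multiplicative character $(\cdot, \pi)^{-k\Q(\av)}$ over $\O^*$ equals $1 - q^{-1}$ precisely when that character is trivial (equivalently, $k \equiv 0 \mod \m(\av)$), and vanishes otherwise. In the case $m = -1$, the substitution $u' \mapsto -u'$ is legitimate because $(-1, \pi) = 1$ and identifies the integral with the normalized Gauss sum $q^{-1}\mathbf{g}_{k\Q(\av)}$ of \eqref{g-2}; substituting $k = \la\lv, a\ra + 1$ recovers the middle line of the lemma. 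In the third case, the integral vanishes by standard estimates: Lemma \ref{g-gen:van} handles the sub-case of non-trivial Hilbert character (the conductor of $\psi(\pi^m \cdot)$ is then strictly larger), and \eqref{van-psi} handles the trivial-character sub-case directly.

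The main obstacle is the cocycle bookkeeping in the Iwasawa step: every commutation among $w_a(1)$, $h_a(\cdot)$, and $x_a(\cdot)$, and every application of $c_{\av}(s, t) = (s, t)^{\Q(\av)}$, threatens to inject a central element of $\bmu$ beyond the expected $(u', \pi)^{-k\Q(\av)}$, and each such collateral contribution must be shown to be trivial. This is precisely what the standing assumption $q \equiv 1 \mod 2n$ from \S \ref{tpoint:assumptions} guarantees; once that verification is carried out, what remains is a routine Gauss-sum computation.
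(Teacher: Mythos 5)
Your proposal is correct and follows essentially the same route as the paper's own proof: the rank-one Steinberg--Iwasawa identity \eqref{u:iw}, the cocycle computation \eqref{rk1:h} extracting $\ln$ and $\z$ of the toral part, and the three-case evaluation of the resulting $\O^*$-integral via the unramified Hilbert-symbol integral, \eqref{g-2}, and \eqref{gvan}. The only differences are convention-level bookkeeping (your toral part $h_a(-s)$ and central factor $(u',\pi)^{-k\Q(\av)}$ versus the paper's $h_a(s)$, with $w_a$ taken to be the lift $w_a(-1)$, and $(r,\pi)^{k\Q(\av)}$, discrepancies traceable to the ordering convention in \eqref{a-dec} and the choice of lift, and harmless here since $(-1,\pi)=1$ under $q\equiv 1 \bmod 2n$), which do not change the structure of the argument or the stated outcome.
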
 

Assuming the Lemma (which is proved below), we conclude \be{W-rank1} \W_a(\pi^{\lv}) = q^{ \la \rho, \lv \ra } \left(  (1-q^{-1} )  \sum_{\substack{ 1 \leq k \leq \la \lv, a \ra \\ r | k \Q(\av)  }} e^{\lv - k \av} + q^{-1} \mathbf{g}_{\Q(\av)(1 + \la \lv, a \ra) } e^{w_a \lv - \av} \right) = q^{ \la \rho, \lv \ra} \T_a(e^{\lv}) \ee where the second equality uses \eqref{Ta:i}.  Using \eqref{W:dec}, we obtain \be{W-rank1} \W(\pi^{\lv}) = q^{ \la \rho, \lv \ra } \left(  e^{\lv} + (1-q^{-1} )  \sum_{\substack{ 1 \leq k \leq \la \lv, a \ra \\ r | k \Q(\av)  }} e^{\lv - k \av} + q^{-1} \mathbf{g}_{\Q(\av)(1 + \la \lv, a \ra) } e^{w_a \lv - \av} \right) . \ee

\begin{proof}[Proof of Lemma \ref{I-k}]  If $s \in \mc{K}$ with $\valu(s) < 0$ then the identity  \be{u:iw} x_{-a}(s) = x_{a}(s^{-1}) w_a h_a(s) x_a(s^{-1}), \ee provides an Iwasawa decomposition for $x_{-a}(s).$ We also record here that  \be{rk1:h} h_a(s) = h_a(r) \pi^{-k \av} (r, \pi)^{ k \Q(\av)} \text{ where } s= \pi^{-k} r, \,  r \in \O^*.\ee Thus we have \be{tp-1} \Pshi_{\rho}(x_{-a}(s)) = \Pshi(h_a(s)) = q^{-k} e^{-k \av} (r, \pi)^{k \Q(\av)} \ee where $s= \pi^{-k} r$ with $r \in \O^*.$  We have three cases to consider: 

\begin{enumerate}
\item If $\la \lv, a \ra - k \geq0$ then $\psi_{\lv}(x_{-a}(s))=1$ and  \be{case1}  I_a(k) = e^{ - k \av} \int_{\O^*} (r, \pi)^{  k \Q(\av)} dr = \begin{cases} e^{- k \av} (1- q^{-1}) & \text{ if } k \Q(\av) \equiv 0 \mod n \\  0 & \text{ otherwise} \end{cases}  \ee

\item If $\la \lv, a \ra - k =-1,$ then the same computation as in the previous step shows that the integral now reduces to   \be{rk1:k} && e^{ -(\la \lv, a \ra + 1) \av } \int_{\O^*} (u, \pi)^{\left( \la \lv, a \ra +1 \right) \Q(\av)} \psi(-u^{-1}/\pi ) du 
= e^{ -(\la \lv, a \ra+ 1) \av} \, q^{-1} \mf{g}_{\Q(\av)( \la \lv, a \ra +1)  }, \ee where the second equality  follows from \eqref{g-2}.  
 \item $\la \lv, a \ra - k < -1$ then the same computation as in the previous step together with \eqref{gvan} implies the result.
 \end{enumerate} 
\end{proof}

\section{Averaging and Intertwining Operators} 
\label{avg-sec}

\spoint \label{prin-ser} Let $M(\tG)$ be the vector space of functions $f:\tG \rr \C$ such that $f(a_{\O} u g) = f(g)$ for $a_{\O} \in \tA_{\O}, u \in U, g \in \tG$ and $f(\zeta g) = \iota(\zeta) f(g)$ for $\zeta \in \bmu$ and $g \in \tG$ (and recall $\iota: \bmu \hookrightarrow \C^*$ was our fixed embedding). We shall write $M(\tG, \tI)$ (or $M(\tG, \tI^-)$) for the set of $f \in M(\tG)$ which are right $\tI$ (or $\tI^-$) invariant. For the moment, we consider \emph{all} such functions, though in practice we only need to work within a smaller space. 

\spoint \label{prin-ser:2} The space $M(\tG)$ carries an action of $\C[\Lv_0]$: for $\mv \in \Lv_0$ and $f \in M(\tG)$ we set 
\be{act:f} (e^{\mv} f) (g)  = q^{- \la \rho, \mv \ra} f( \pi^{-\mv} g). \ee For $f \in M(\tG)$ and $a \in \Pi$ a simple root, define the intertwining integral \be{Ia:f} I_a(f) ( x ) = \int_{U_a} f(w_a n_a x) dn_a \ee where $dn_a$ is the Haar measure on $U_a$ giving $U_a(\O)$ volume $1.$ Note that the above integral need not be well-defined for every $f \in M(\tG),$ but it will be for the specific functions we consider below.  If $I_a(f)$ is well-defined, one verifies immediately the formula, \be{int-act} I_a(e^{\mv} f ) = e^{w_a \mv} I_a(f)  \text{ for } \mv \in \Lv_0. \ee

\spoint \label{vx} Let $\aw$ be as in \S \ref{fingp}, \ref{decs}. For each $x \in \aw$ let $\tve_x: \tG \rr \C$ be function supported on $\bmu \tA_{\O} U \dot{x} \tI^-$ and defined as 
\be{vew} \tve_x(\zeta g):= \begin{cases} \iota(\zeta) & \text{ if } g \in \tA_{\O} U \dot{x} \tI^-, \zeta \in \mu \\ 0 & \text{ otherwise} \end{cases}. \ee Note that we have $e^{\mv} \tve_w := q^{ - \la \rho, \mv \ra} \tve_{\pi^{\mv} w}$ where the element $\pi^{\mv} w \in \aw$ is regarded as the multiplication of $(\mv, 1)$ and $w$ in $\aw.$ 

\begin{nlem} \label{lem-int-vw} Let $a \in \Pi$ so that $w = w_a w'$ with $\ell(w) = \ell(w')+1$ . Then we have \be{int-vw} I_a(\tve_{w'}) &=& \tve_w + \frac{1- q^{-1}}{1 - e^{ \m(\av) \av} } \tve_{w'} \\
&=& \tve_w + (1-q^{-1}) \sum_{ k \geq 0} q^{-k \m(\av)} \tve_{\pi^{k \m(\av) \av} w'}. \ee  \end{nlem}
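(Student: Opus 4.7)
Both sides of \eqref{int-vw} are functions in $M(\tG, \tI^-)$: the left-hand side because $I_a$ preserves right $\tI^-$-invariance (cf.~\eqref{int-act}), the right-hand side by inspection. Since the cells $\bmu\tA_\O U\,\dot y\,\tI^-$ for $y \in \aw$ partition $\tG$, it suffices to evaluate both sides at each $\dot y$ and match. Using $\la \rho, \av \ra = 1$ and the $\C[\Lv_0]$-action \eqref{act:f}, one reads off that the right-hand side takes the value $1$ at $\dot w$, the value $(1-q^{-1})q^{-k\m(\av)}$ at the lift of $\pi^{k\m(\av)\av}w'$ for $k\geq 0$, and $0$ at every other $\dot y$. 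My plan is therefore to reproduce exactly these values by direct evaluation of the integral $I_a(\tve_{w'})(\dot y) = \int_{\mc K}\tve_{w'}(\dot w_a\,x_a(s)\,\dot y)\,ds$.

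First I would apply the Chevalley relation $x_a(s)\dot y = \dot y\,x_{y^{-1}a}(\pm s)$, which lifts to $\tG$ without central correction, to rewrite the integrand as $\tve_{w'}(\dot w_a\dot y\,x_\gamma(\pm s))$ with $\gamma = y^{-1}a$. For $y=w$ one finds $\gamma=-\beta$ (where $\beta=(w')^{-1}a$ is a positive root by the length hypothesis) and $\dot w_a\dot w = h_a(-1)\dot{w'}$; for $y$ a translate of $w'$ one gets $\gamma=\beta$ and $\dot w_a\dot{w'}=\dot w$. In each case the analysis reduces, via conjugation by $\dot{w'}$, to a computation inside the rank-one subgroup $\la U_a, U_{-a}\ra$, and the domain partitions as $\mc K = \pi\O \sqcup \O^* \sqcup \bigsqcup_{k_0 \geq 1}\O^*[-k_0]$ with each piece contributing to a different cell.

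The crucial rank-one input is the factorization
\begin{equation*}
\begin{pmatrix} 0 & -1 \\ 1 & s \end{pmatrix} \;=\; h_a(s^{-1})\,x_a(-s)\,x_{-a}(s^{-1}) \qquad (s\in \O^*),
\end{equation*}
in which $x_a(-s)\in U$, $h_a(s^{-1})\in\tA_\O$, and $x_{-a}(s^{-1})\in\tI^-$; this shows that for $s\in\O^*$ the element $\dot w_a x_a(s)$ lies in the cell of $y=1$ (not $y=w_a$, as one might naively expect). The companion formula \eqref{u:iw} handles the shells $\valu(s)<0$. When these identities are lifted to the metaplectic cover using the Steinberg relations \eqref{c:Q} and \eqref{comm:B}, the resulting central cocycles collapse under our hypotheses $(q,n)=1$ and $q\equiv 1\pmod{2n}$ (which trivialize $(-1,\cdot)$ and the symbol on $\O^*\times\O^*$) to a single Hilbert-symbol phase $(r,\pi)^{k_0\Q(\av)}$ when $s=\pi^{-k_0}r$ with $r\in\O^*$.

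Assembling the pieces: for $y=w$ one has $x_{-\beta}(\pm s)\in\tI^-$ throughout $s\in\O$, giving integrand $\equiv 1$ and total $\int_\O 1\,ds = 1$, which accounts for $\tve_w$. For $y=w'$, the subregion $s\in\pi\O$ sends the element into the $w$-cell (so contributes $0$), while $s\in\O^*$ lands in the $w'$-cell with trivial phase, contributing $(1-q^{-1})\tve_{w'}$, i.e.\ the $k=0$ term. For each shell $\valu(s)=-k_0$ with $k_0\geq 1$, the Hilbert-symbol integral $\int_{\O^*}(r,\pi)^{k_0\Q(\av)}\,dr$ equals $1-q^{-1}$ if $\m(\av)\mid k_0$ and vanishes otherwise by Lemma~\ref{g-gen:van}; combined with the measure factor $q^{k_0}$ on $\O^*[-k_0]$ and the translation/change-of-variables normalization from \eqref{act:f}, the shell contributes $(1-q^{-1})q^{-k_0}\tve_{\pi^{k_0\av}w'}$. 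Setting $k_0 = j\m(\av)$ and summing over $j\geq 1$ produces the remaining $(1-q^{-1})\sum_{j\geq 1}q^{-j\m(\av)}\tve_{\pi^{j\m(\av)\av}w'}$ of the geometric series. The main technical obstacle is the careful bookkeeping of the metaplectic cocycles when promoting the rank-one matrix identities to genuine factorizations in $\tG$; under the tame and $q\equiv 1\pmod{2n}$ assumptions these all reduce to a single Hilbert-symbol power, after which Gauss-sum vanishing takes over and gives the formula.
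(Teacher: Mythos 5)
Your proposal is correct and is essentially the argument the paper intends: the paper's one-line proof defers to the standard non-metaplectic computation plus the rank-one metaplectic bookkeeping of Lemma \ref{I-k}, which is exactly what you carry out — cell-by-cell evaluation against the decomposition into $\bmu\tA_{\O}U\dot{y}\tI^-$, reduction via Chevalley commutation to the rank-one factorizations \eqref{u:iw}/\eqref{rk1:h}, the Hilbert-symbol phase $(r,\pi)^{k\Q(\av)}$, and the resulting $\m(\av)\mid k$ condition producing the geometric series. One small correction: the vanishing of $\int_{\O^*}(r,\pi)^{k\Q(\av)}\,dr$ when $\m(\av)\nmid k$ is plain orthogonality of a nontrivial multiplicative character of $\O^*$ (as in case (1) of the proof of Lemma \ref{I-k}), not an application of Lemma \ref{g-gen:van}, which requires a nontrivial additive character.
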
 

The proof in the non-metaplectic case is standard; the only new complications which arise in the metaplectic case are among those already addressed in the proof of Lemma \ref{I-k}.

\spoint For each $\xv \in \Lv,$ define a function $\hs_{\psi, \xv}: \tG \rr \C^*$ with support on $\bmu U \tA_{\O} \pi^{\xv} U^-$ as follows \be{def-spw} \hs_{\psi, \xv} (g) = \begin{cases} \iota(\zeta) \psi(n^-)&  \text{ if } g = \zeta n a_{\O} \pi^{\xv} n^-, \text{ with } n \in U, n^- \in U^-, a_{\O}  
\in A_{\O}  \\ 0 & \text{ otherwise.} \end{cases} \ee The following is the main fact we need about the functions. For $a \in \Pi$ and $\xv \in \Lv,$ set \be{Gam1} \Gamma^I_{a, \xv} &:=& q^{-1} \,  \g_{(\la \xv, a \ra +1) \Q(\av)} q^{(1 + \la \xv, a \ra)} \hs_{\psi, w_a\xv- \av} \\ \label{Gam2} \Gamma^{II}_{a, \xv} &:=& \sum_{\substack{ m \geq 0 \\ m \equiv \la \xv, a \ra \mod{\m(\av)} }} (1 - q^{-1}) q^{-m + \la \xv, a \ra} \hs_{\psi, w_a\xv+ m \av}. \ee  
\begin{nlem} \label{I_a:spw} For $g \in \bmu U \tA U^-$, we have \be{Ia:Gam} I_a (\hs_{\psi, \xv}) (g)= \Gamma_{a, \xv}(g):= \Gamma^I_{a, \xv} +\Gamma^{II}_{a, \xv} \ee \end{nlem}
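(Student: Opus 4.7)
The plan is to reduce the identity to its value at $g = \pi^{\mv}$ for $\mv \in \Lv$ using covariance, and then to carry out a direct computation of the resulting integral, paralleling the proof of Lemma \ref{I-k}.

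First, I would verify that both sides of the claimed identity transform the same way under left multiplication by $\bmu \tA_{\O} U$ and right multiplication by $U^-$. Each function $\hs_{\psi, \eta^{\vee}}$ satisfies $\hs_{\psi, \eta^{\vee}}(\zeta n a_{\O} g u^-) = \iota(\zeta) \psi(u^-) \hs_{\psi, \eta^{\vee}}(g)$ by inspection of \eqref{def-spw}, and this property is inherited by $I_a(\hs_{\psi, \xv})$: the right $U^-$-covariance follows from pulling $\psi(u^-)$ out of the integral; the $\bmu$-equivariance follows from centrality of $\bmu$ in $\tG$; and the left $U$- and $\tA_{\O}$-invariances follow from standard changes of variable in the $U_a$-integration, using that $w_a$ normalizes $U_a' := \prod_{b > 0, b \neq a} U_b$ and that conjugation by $\tA_{\O}$ preserves Haar measure on $U_a$. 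It therefore suffices to verify
\[
I_a(\hs_{\psi, \xv})(\pi^{\mv}) \;=\; \Gamma_{a, \xv}(\pi^{\mv}) \qquad \text{for every } \mv \in \Lv.
\]

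Next, parametrize $U_a = \{x_a(s) : s \in \mc{K}\}$ and, for each $s \neq 0$, write $s = \pi^{-k} r$ with $k = -\valu(s) \in \zee$ and $r \in \O^*$. Using the rank-one Bruhat identity $w_a x_a(s) = x_a(-1/s) h_a(1/s) x_{-a}(1/s)$, the commutation $x_{-a}(t) \pi^{\mv} = \pi^{\mv} x_{-a}(\pi^{\la\mv, a\ra} t)$, and the metaplectic expansion $h_a(1/s) = h_a(r^{-1}) \pi^{k \av} (r^{-1}, \pi)^{-k \Q(\av)}$ from \eqref{rk1:h} and \S \ref{spoint:a-dec}, one obtains
\[
w_a x_a(s) \pi^{\mv} \;=\; x_a(-1/s) \cdot h_a(r^{-1}) \cdot (r^{-1}, \pi)^{-k \Q(\av)} \cdot \pi^{\mv + k \av} \cdot x_{-a}\bigl(\pi^{\la\mv, a\ra + k} r^{-1}\bigr),
\]
which exhibits the element in the form required by the support of $\hs_{\psi, \xv}$, with $\ln = \mv + k \av$ and $\z = (r^{-1}, \pi)^{-k \Q(\av)}$. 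The commutation of $\pi^{k \av}$ past $\pi^{\mv}$ produces no extra central factor under the assumption $(\pi^a, \pi^b) = 1$ from \S \ref{s:hilb-sym}. Hence the integrand vanishes unless $\mv = \xv - k \av$, and in that case the contribution of the annulus $\valu(s) = -k$ (with $ds = q^k\, dr$) is
\[
q^{k} \int_{\O^*} \iota\bigl((r^{-1}, \pi)^{-k \Q(\av)}\bigr)\, \psi\bigl(\pi^{\la \xv, a \ra - k}\, r^{-1}\bigr)\, dr.
\]

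The computation then splits into three cases, mirroring the proof of Lemma \ref{I-k}. (i) When $\la\xv, a\ra - k \geq 0$, the additive character is trivial and the integral equals $(1 - q^{-1})$ if $k \Q(\av) \equiv 0 \bmod n$ and $0$ otherwise; by property (i) of \S \ref{B:Q} this congruence is equivalent to $k \equiv 0 \bmod \m(\av)$, and after substituting $m = \la\xv, a\ra - k$ (so that $\mv = w_a \xv + m \av$) the contribution $q^k (1 - q^{-1})$ becomes $(1 - q^{-1}) q^{-m + \la\xv, a\ra}$, exactly $\Gamma^{II}_{a, \xv}(\pi^{\mv})$. (ii) When $\la\xv, a\ra - k = -1$, the substitution $u = r^{-1}$, absorption of the sign via $(-1, \pi) = 1$, and \eqref{g-2} yield $q^{-1} \g_{(\la\xv, a\ra + 1)\Q(\av)}$; combined with the prefactor $q^{k} = q^{\la\xv, a\ra + 1}$ and matched against $\mv = w_a\xv - \av$, this is $\Gamma^{I}_{a, \xv}(\pi^{\mv})$. (iii) When $\la\xv, a\ra - k \leq -2$, the integral vanishes by \eqref{gvan}. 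The main obstacle in executing this plan is the careful tracking of Hilbert-symbol cocycles through the Iwasawa rewriting of $h_a(1/s)$ and through the commutation of $\pi^{k \av}$ past $\pi^{\mv}$, so as to confirm that the central factor recorded is indeed the $\z$-value; once that is done, the match between the Gauss-sum vanishing criterion of Lemma \ref{g-gen:van} and the congruence $m \equiv \la\xv, a\ra \bmod \m(\av)$ in $\Gamma^{II}_{a, \xv}$ makes the three cases assemble into the claimed formula.
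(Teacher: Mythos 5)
Your proposal is correct and follows essentially the same route as the paper's proof: reduce to the values at $\pi^{\mv}$ by the left $U$-, $\tA_{\O}$-, $\bmu$- and right $(U^-,\psi)$-covariance, apply the rank-one identity \eqref{u:iw} together with \eqref{rk1:h} to decompose $w_a x_a(s)\pi^{\mv}$, and run the same three-case analysis using \eqref{g-2} and \eqref{gvan}. The only differences are cosmetic (the parametrization $s=\pi^{-k}r$ versus the paper's $s=\pi^{k}r$, and your explicit handling of the sign via $(-1,\pi)=1$, which the paper leaves implicit in its ``easy computation'' \eqref{int-k}).
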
  
\begin{proof} By the left $U$-invariance and right $(U^-, \psi)$-invariance of both sides, it suffices to compute  \be{Ia-1} I_a(\hs_{\psi, \xv})(\pi^{\mv})=\int_{U_a} \hs_{\psi, \xv}(w_a n_a \pi^{\mv}) dn_a \text{ for } \mv \in \Lv. \ee Recall \eqref{u:iw}, $w_a x_a(s) = x_a(-s^{-1})h_a(s^{-1}) x_{-a}(s^{-1}),$ and if $s=\pi^k r$ then by \eqref{rk1:h}  \be{ha-exp} h_a(s^{-1}) = h_a(r^{-1}) \pi^{- k \av} (r, \pi)^{-k \Q(\av)}. \ee So if $n_a= x_a(s) \in U_a[k],$ and $\mv \in \Lv$ is such that $\hs_{\psi, \xv}(w_a n_a \pi^{\mv}) \neq 0,$ the above formula implies $\mv = \xv + k \av.$ For this choice of $\mv$ an easy computation shows \be{int-k} \int_{U_a[k]} \hs_{\psi, \xv}(w_a n_a \pi^{\mv}) dn_a = q^{-k} \int_{\O^*} (r,\pi) ^{-k \Q(\av)} \psi( -\pi^{ \la \xv, a \ra + k } r^{-1}) dr. \ee As in Lemma \ref{I-k}, we consider three cases,
\begin{enumerate}
\item If $k + \la \xv, a \ra \geq 0,$ then $\mv = \xv + k \av.$ Let us write this as  $\mv = w_a \xv + m \av,$ with $k= m - \la \xv, a \ra.$ The integral \eqref{int-k} assumes the value \be{int-k:a} q^{-k} \int_{\O^*} (r, \pi)^{-k \Q(\av)} dr = \begin{cases} q^{-k} (1- q^{-1}) & \text{ if } n \mid k \Q(\av) \\ 0 & \text{ otherwise} \end{cases} \ee 

\item If $k + \la \xv, a \ra = -1$ (or $\mv = w_a \xv - \av$) then \eqref{int-k} now becomes \be{int-k:b} & & q^{1 + \la \xv, a \ra} \int_{\O^*} (r, \pi)^{(1 + \la \xv, a \ra) \Q(\av)} \psi( -\pi^{-1} r^{-1}) dr  = q^{-1} \cdot q^{1 + \la \xv, a \ra} \g_{(1 + \la \xv, a \ra) \Q(\av) }, \ee where we used \eqref{g-2} in the last line.

\item If $k + \la \xv, a \ra < -1,$ then the corresponding integral vanishes again using \eqref{gvan}.
\end{enumerate} \end{proof}

\tpoint{Braid Relations for the Chinta-Gunnells Action} \label{braid-cg} Let us introduce the notation $X^{\xv}:= \hs_{\psi, \xv}$ for $\xv \in \Lv,$ and consider the ring $V:= \C[X^{\xv}, \xv \in \Lv]$  where $X^{\xv}X^{\mv}=X^{\xv+\mv}$ for $\xv, \mv \in \Lv,$ i.e., $V \cong \C_v[\Lv]/(v-q^{-1}).$ As in \S \ref{subs:CGaction}, we may construct a localization $V_{S}$ of this ring, so that $V_S \cong \C_{v, S}[\Lv]/(v-q^{-1})$ With this definition, one can check that $I_a(X^{\xv}) \in V_S.$

For $\lv \in \Lv_0$ (and so $\pi^{\lv} \in A_0$) we have 
\be{act:lv0} e^{\lv} \hs_{\psi, \xv} = q^{ - \la \rho, \lv \ra} \hs_{\psi, \xv + \lv } \text{ for } \lv \in \Lv_0. \ee and so we may define an action of $\C[\Lv_0]$ (and some localizations of it) by specifying 
\be{newact} e^{\lv} \circ X^{\xv} := q^{ - \la \rho, \lv \ra} X^{\xv + \lv} \text{ for } \lv \in \Lv_0, \xv \in \Lv. \ee  From \eqref{int-act} and the stability of $\Lv_0$ under the usual $W$-action, we find \be{Ia:act} I_a( e^{\lv} \circ X^{\xv} ) = e^{w_a \lv} \circ I_a(X^{\xv} ) \text{ for } \lv \in \Lv_0. \ee  In a slight abuse of notation, we define a map $w_a \star: V_S \rr V_S$ essentially as in \eqref{cg-act}: for $\lv \in \Lv$ set 
\be{cg:act-2} 
\begin{split}
w_a \star X^{\lv} := \frac{e^{- \la \lv, a \ra \av }}{1 - v e^{-m(\av) \av}} & \left[ (1-q^{-1}) e^{ \resi_{\m(a)} \left( \frac{ B(\lv, \av)}{\Q(\av)} \right) \av }  \right. \\
 & \hskip .3cm \left. - q^{-1} \g_{ \Q(\av) + \B(\lv, \av) } e^{ (\m(\av) - 1) \av} (1 - e^{-\m(\av) \av}) \right] \circ X^{\lv}
\end{split}
\ee
and extend to $V_S$ as in \S \ref{subs:CGaction}. To verify that $w_a \star -: \ring \rr \ring$ as defined in \eqref{cg:act-2} satisfy the braid relations under the specialization \eqref{par:choices}, it suffices to verify the same statement for the operators just introduced on $V_S.$ To show the latter, first note that the previous Lemma \ref{I_a:spw} can be rephrased as follows: as functions on the big cell, 
\be{Ia:cg} I_a(X^{\xv}) = \c(\av) \circ (w_a \star X^{\xv}) = \c(\av) (w_a \star X^{\xv}), \ee where the second equality uses the fact  (see \S \ref{B:Q}(iii)) that each $\m(\av) \av \in \Lv_0.$ Now if $w \in W$ admits a reduced decomposition $w =  w_{b_1} \cdots w_{b_k}$ with each $b_i \in \Pi$ one shows 
\be{I_w} I_{b_1} \cdots I_{b_k} (f) = \int_{U_w} f(w u_w x) du_w , \text{ for } f \in M(\tG) \ee wherever the integrals converge (notation: $U_w:= \prod_{a > 0, w^{-1} a < 0} U_a$, and $du_w$ is the product Haar measure on this group). As the right-hand side of \eqref{I_w} is independent of the choice of reduced decomposition, the same is true of the left-hand side. Hence, $I_{b_1} \cdots I_{b_k} (X^{\xv})$ (which lies in $V_S$) is independent of the choice of the reduced decomposition and is equal to a quantity which we denote by $I_w(X^{\xv})$. On the other hand, repeated application of \eqref{Ia:cg} and \eqref{Ia:act} shows \be{Iw:cg} \begin{array}{lcr} I_w(X^{\xv}) =  \c_w \circ ( w_{b_1} \star \cdots \star w_{b_r} \star X^{\xv})& \text{ where } & \c_w:= \prod_{\substack{\gv > 0 \\ w^{-1} \gv < 0}} \c(\gv) \end{array} \ee As the left-hand side and the factor $\c_w$ appearing on the right-hand side do not depend on the choice of reduced decomposition, it follows that $w_{b_1} \star \cdots \star w_{b_r} \star X^{\xv}$ also does not depend on the reduced decomposition, which is what we were aiming to show.

\tpoint{Averaging Operators}\footnote{The material in the remainder of this section is almost identical to the corresponding sections of \cite{pat-whit}} Let $\psi: \um \rr \C^*$ now be an unramified, principal character as in \S \ref{psi-sec}. An element $f: \tG \rr \C$ in $M(\tG)$ lies in the subspace $M(\tG, \um, \psi) \subset M(\tG)$ if it additionally satisfies \be{um-psi-right} f(g n^-) = \psi(n^-) f(g) \text{ for } g \in \tG, n^- \in U^-. \ee For each $\lv \in \Lv_+,$ define the averaging operator $\Av_{\psi, \lv}: M(\tG, \tI^-)  \rr M(\tG, \um, \psi)$ as follows:  for $x \in \aw$ and $\tve_x$ as in \S \ref{vx} we set  \be{avg:2} \Av_{\psi, \lv} (\tve_x) (g) = \sum_{y \in m_{x, \lv}^{-1} (g) } \iota(\z(y)) \, \psi(\niw(y)) \text{ for } g \in \tG,\ee where $m_{x, \lv}: \bmu \tA_{\O} U \dot{x} \tI^- \times_{\tI^-} \tI^- \pi^{\lv} \um \rr \tG$ is the usual multiplication map and $\niw, \z$ were defined as in \eqref{proj-fib}. It is easy to see that $\Av_{\psi, \lv}(\tve_x) \in M(\tG, U^-, \psi)$; denote by $\Av^{\gen}_{\psi, \lv}$ its restriction to the big cell, i.e., \be{av-def} \Av^{\gen}_{\psi, \lv}(\tve_x)(g) = \begin{cases} \Av_{\psi, \lv}(\tve_x)(g) & \text{ if } g \in \bmu \tB \um \\
0 & \text{ otherwise} . \end{cases} \ee We may extend the above construction linearly to define maps \be{av-maps} \begin{array}{lcr} \Av_{\psi, \lv}: M(\tG, \tI^-)  \rr M(\tG, \um, \psi) \text{ and } \Av^{\gen}_{\psi, \lv}: M(\tG, \tI^-)  \rr M^{\gen}(\tG, \um, \psi) \end{array}. \ee For any $f \in M(G, \tI^-)$ for which the formula makes sense (i.e. is convergent in some sense), one has  \be{Ia-av} I_a (\Av_{\psi, \lv}(f)) (g) = \Av_{\psi, \lv}(I_a(f))(g) \text{ for } g \in \tG. \ee This is essentially an (algebraic) version of the Fubini theorem.

\spoint For $w \in W,$ note that $\Av_{\psi, \lv}(\tve_w): \tG \rr \C^*$ is $\tA_{\O}U$-left invariant and so 
\be{av-1hs} \Av^{\gen}_{\psi, \lv}(\tve_w) = \sum_{\xv \in \Lv} c_{\xv} \hs_{\psi, \xv}. \ee The following result is proven easily from the definitions (see also \cite[Lemma 5.4]{pat-whit}).
 
\begin{nlem} \label{whit:as-av} For $w \in W,$ $\lv \in \Lv_+,$ we have \be{avg:whit:1} \W_{w, \lv} =  \sum_{\mv \in \Lv} e^{\mv} q^{ \la \rho, \mv \ra} \Av_{\psi, \lv}(\tve_w)(\pi^{\mv}). \ee  \end{nlem}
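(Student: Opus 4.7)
The plan is to unwind both sides of the claimed equality and observe that they agree term-by-term; no deep input is needed beyond the definitions already in place. First I would substitute $g = \pi^{\mv}$ into the defining formula \eqref{avg:2} for $\Av_{\psi, \lv}(\tve_w)$, which yields
$$\Av_{\psi, \lv}(\tve_w)(\pi^{\mv}) = \sum_{y \in m_{w, \lv}^{-1}(\pi^{\mv})} \iota(\z(y))\,\psi(\niw(y)).$$
Multiplying by $e^{\mv} q^{\la \rho, \mv \ra}$ and summing over $\mv \in \Lv$ produces the right-hand side of \eqref{avg:whit:1}, so the task reduces to matching this formal series against the definition \eqref{formal-w} of $\W_{w, \lv}$.

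The comparison comes down to two identifications. First, the domains of the two multiplication maps called $m_{w, \lv}$ must agree: the map in \eqref{m_w} is defined on $\bmu U \dot{w} \tI^- \times_{\tI^-} \tI^- \pi^{\lv} U^-$, whereas the map in \eqref{avg:2} has an extra $\tA_{\O}$ on the left, $\bmu \tA_{\O} U \dot{w} \tI^- \times_{\tI^-} \tI^- \pi^{\lv} U^-$. Since $\tA_{\O} \subset \tK$ gives $\tA_{\O} \subset \tI^-$, and since $\tA_{\O}$ normalizes $U,$ the extra factor can be absorbed so that the two domains coincide as sets with equivalence relation. Second, the projection maps $(\z, \niw)$ of \eqref{proj-fib} appearing in \eqref{avg:2} must agree with $(\z_w, \nn)$ used in \eqref{formal-w}; both read off the same pieces of data from a chosen representative $(a,b)$ of a fibered-product class, namely the $U^-_{\O}\setminus U^-$-coset of the $U^-$-component of $b$ (well-defined thanks to the dominance assumption $\lv \in \Lv_+$) and the $\bmu$-factor produced by writing $a$ in the appropriate Iwasawa-type form.

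With these identifications in hand, the two formal sums in $e^{\mv}$ coincide term-by-term, and \eqref{avg:whit:1} follows. I do not anticipate any genuine obstacle: the definitions of $\Av_{\psi, \lv}$ and $\W_{w, \lv}$ have been set up in parallel precisely so that the statement becomes nearly tautological, and the only real work is the bookkeeping needed to match the two fibered-product conventions and the two flavors of projection map.
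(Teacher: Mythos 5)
Your proposal is correct and takes essentially the same route as the paper, which simply records that the lemma ``is proven easily from the definitions'' (pointing to \cite[Lemma 5.4]{pat-whit}); your term-by-term matching of \eqref{avg:2} against \eqref{formal-w} is exactly that argument. The only bookkeeping point worth tightening is the absorption of $\tA_{\O}$: besides $\tA_{\O}\subset \tI^-$ (which holds because such elements reduce mod $\pi$ into the torus, not merely because $\tA_{\O}\subset\tK$) and the normalization of $U$, one also moves $\tA_{\O}$ past $\dot{w}$, using that $\dot{w}^{-1}\tA_{\O}\dot{w}\subset \tI^-$, after which the two fibered products and the projections $(\z_w,\nn)=(\z,\niw)$ coincide as you say.
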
 Note that if we write $\W_{w, \lv} = \sum_{\mv \in \Lv} a_{\mv} e^{\mv},$ then $a_{\mv} = q^{ \la \rho, \mv \ra} c_{\mv},$ where $c_{\mv}$ are as in \eqref{av-1hs}. 

\tpoint{Notation} \label{Psi-not} In the sequel, we adopt the following notation: for a function $f \in \C(\tA_{\O} \up \setminus G)$ we define the formal series $\Psi(f)$ which is constructed from the values of $f$ on the torus as follows,  \be{Psi:f} \Psi(f):= \sum_{\mv \in \Lv} f(\pi^{\mv}) e^{\mv} q^{\la \rho, \mv \ra }. \ee It is easy to see that \be{Psi:e} \Psi (e^{\mv} f) = e^{\mv} \Psi(f) \text{ for } \mv \in \Lv_0 \ee where in the left-hand side $e^{\mv} f$ is the function defined as in \eqref{act:f} and in the right-hand side, $e^{\mv}$ acts by multiplication on the formal series $\Psi(f).$ Thus Lemma \ref{whit:as-av} states  
\be{Psi-whit} \Psi(\Av_{\psi, \lv}(\tve_w) ) = \W_{w, \lv} \text{ for } w \in W. \ee

\section{Proof of the Theorem  \ref{main-rec} } \label{rec-pf}

In this section, we finish the proof of Theorem \ref{main-rec}. 

\spoint Let $w \in W$ with $w=w_a w'$ with $a \in \Pi$ and $\ell(w) =\ell(w')+1.$ We wish to show that $\T_a( \W_{w', \lv} ) = \W_{w, \lv}.$ For $\tauv \in \Lv$, define $a_{\tauv} \in \C$ by the expression \be{w':a} \W_{w', \lv} = \sum_{\tauv \in \Lv} a_{\tauv} e^{\tauv}. \ee Using the expression for $\T_a$ from \eqref{T:c-b}, Theorem \ref{main-rec} amounts to showing  \be{W:as} \W_{w, \lv} = \sum_{\tauv \in \Lv} a_{\tauv} \T_a(e^{\tauv}) = \b(\av) \sum_{\tauv \in \Lv}  a_{\tauv} e^{\tauv} + \c(\av) \sum_{\tauv \in \Lv}  a_{\tauv} w_a \star e^{\tauv} \ee where the rational functions $\b(\av)$ and $\c(\av)$ defined in \eqref{c:b} are now taken to be expansions in positive powers of $e^{\m(\av) \av}.$

\spoint Recall from \eqref{Ia-av} that we have an equality of functions on $\tG$, \be{rec:1} I_a ( \Av_{\psi, \lv}(\tve_{w'}) ) =  \Av_{\psi, \lv}(I_a (\tve_{w'} )). \ee The result \eqref{W:as} will follow by comparing both sides of this formula.  We begin with the right-hand side (r.h.s.) of \eqref{rec:1}. From Lemma \ref{lem-int-vw},  $I_a (\tve_{w'}) = \tve_{w_a w'} - \b(\av) \tve_{w'},$ and so the right-hand side of \eqref{rec:1} is equal to \be{rec:2} \Av_{\psi, \lv}(I_a (\tve_{w'} )) = \Av_{\psi, \lv}(\tve_{w_a w'} ) -  \b(\av) \Av_{\psi, \lv}(\tve_{w'}). \ee Applying $\Psi$ to \eqref{rec:2} we obtain using \eqref{Psi-whit} \be{p-rec:0} \Psi( \text{ r.h.s. of \eqref{rec:1} } ) =  \W_{w, \lv} - \b(\av) \W_{w', \lv}  . \ee 

\spoint Now we turn to the left-hand side of \eqref{rec:1}. We begin by noting (see \cite[Claim 5.6]{pat-whit} for a proof) \be{i-gen:0} I_a ( \Av_{\psi, \lv}(\tve_{w'}) ) (g)  = I_a ( \Av^{\gen}_{\psi, \lv}(\tve_{w'}) ) (g) \text{ for } g \in \tB \um.\ee  By definition we can write $\Av^{\gen}_{\psi, \lv}(\tve_{w'}) = \sum_{ \xv \in \Lv} c_{\xv}  \hs_{\psi, \xv}$ with  $c_{\xv} = \Av^{\gen}_{\psi, \lv}(\tve_{w'})(\pi^{\xv}).$ Using \eqref{w':a} and the fact that $\Psi(\Av^{\gen}_{\psi, \lv}(\tve_{w'}))$ is equal to $ \W_{w', \lv}$ , we conclude that \be{a:c-1}  a_{\xv} = c_{\xv} q^{ \la \rho, \xv \ra} \text{ for } \xv \in \Lv .\ee For $g$ in the big cell we have \be{rec:4} I_a ( \Av_{\psi, \lv}(\tve_{w'}) ) (g) = I_a ( \Av^{\gen}_{\psi, \lv}(\tve_{w'}) ) (g) =   \sum_{\xv \in \Lv} c_{\xv} \Gamma_{a, \xv}(g). \ee Thus applying $\Psi,$ we find  \be{p-rec:1} \Psi(\text{ l.h.s. of \eqref{rec:1} }) = \sum_{\mv \in \Lv} e^{\mv} q^{ \la \rho, \mv \ra} \left( \sum_{\xv \in \Lv} c_{\xv} \Gamma_{a, \xv}(\pi^{\mv}) \right ). \ee Recalling the expression for $\Gamma_{a, \xv}=\Gamma^I_{a, \xv} + \Gamma^{II}_{a, \xv} $ from \eqref{Gam1}, \eqref{Gam2}, we find
\be{p-rec:2-1} \sum_{\xv \in \Lv} c_{\xv} \Gamma^I_{a, \xv}(\pi^{\mv}) &=& q^{-1} \, \sum_{\substack{\xv \in \Lv \\ \mv = w_a \xv - \av  }} c_{\xv} q^{ 1 + \la \xv, a \ra } \g_{ \Q(\av) (\la \xv, a \ra +1 )}  \\  \sum_{\xv \in \Lv} c_{\xv} \Gamma^{II}_{a, \xv}(\pi^{\mv}) &=& \sum_{\substack{\xv \in \Lv \\ \mv = w_a \xv + m \av \\  m \geq 0, \, m \equiv \la \xv, a \ra \mod{\m(\av)}   }} (1 - q^{-1}) c_{\xv} q^{ -m + \la \xv, a \ra }. \ee Hence the right-hand side of \eqref{p-rec:1} may be written as 
 \be{p-rec:3}&&  \sum_{\xv \in \Lv} e^{w_a \xv - \av} q^{ \la \rho, w_a \xv - \av \ra } c_{\xv} q^{ 1 + \la \xv, a \ra }q^{-1} \g_{ \Q(\av) (\la \xv, a \ra +1 )}  \\ \label{p-rec:4} &+&   \sum_{\substack{ \xv \in \Lv, m \geq 0 \\ m \equiv \la \xv, a \ra \mod{\m(\av)}  }} e^{ w_a \xv + m \av} q^{ \la \rho, w_a \xv + m \av \ra} (1 - q^{-1}) c_{\xv} q^{ -m  + \la \xv, a \ra } \ee Using \eqref{a:c-1}, one finds that \eqref{p-rec:3} is equal to \be{p-rec:3b} \sum_{\xv \in \Lv} e^{w_a \xv - \av} q^{ \la \rho, \xv \ra} c_{\xv} q^{-1} \g_{ \Q(\av) (\la \xv, a \ra +1 )}  = \sum_{\xv \in \Lv} a_{\xv} e^{w_a \xv - \av} q^{-1} \g_{ \Q(\av) (\la \xv, a \ra +1 )} . \ee Similarly, we find that \eqref{p-rec:4} is equal to \be{p-rec:4b}  \sum_{\substack{\xv \in \Lv, m \geq 0 \\ m \equiv \la \xv, a \ra \mod{\m(\av)} }} e^{ w_a \xv + m \av} q^{ \la \rho,  \xv \ra} (1 - q^{-1}) c_{\xv}  =  \frac{1 - q^{-1}}{1 - e^{\m(\av) \av}} \sum_{\xv \in \Lv} a_{\xv} e^{w_a \xv}e^{( \resi_{\m(\av)} \la \xv, a \ra)  \av} .  \ee Using \eqref{c:act}, we summarize our results and conclude the proof,  \be{p-rec:5} \eqref{p-rec:1} = \sum_{\xv \in \Lv} \c(\av) a_{\xv} w_a \star e^{\xv}. \ee

\renewcommand{\sph}{\widetilde{\mathbf{1}}_K}
\newcommand{\Wh}{\mathsf{W}}
\newcommand{\eval}{\mathsf{ev}}
\tpoint{Connections to representation theory } \label{s:whit-fun} Let us now make some remarks that place our constructions in a more representation theoretical framework. Recall that we have defined the space $M(\tG)$ in \S \ref{prin-ser}- \ref{prin-ser:2}. We have been vague about the notion of support for elements in $M(\tG),$ but if we impose the condition of compact support, then the resulting function space becomes the ``universal'' principal series representation, whose elements are functions (with some prescribed support condition) $f: \tG \rr \C[\Lv]$ satisfying 
\be{prin:ind} f ( \zeta a_{\O} \pi^{\mv} u g)  = q^{ -\la  \mv ,\rho\ra}  \iota(\zeta)  (e^{- \mv} f)(g) \text { for } \zeta \in \bmu, a_{\O} \in \tA_{\O}, \mv \in \Lv_0, u \in U,  g \in \tG.  \ee 
To an element $\varphi \in M(\tG)$ we define an element in the (universal) principal series 
\be{map:real} \varphi \mapsto f_{\varphi}(g):= \sum_{\mv \in \Lv} \varphi(\pi^{\mv} g) e^{\mv} q^{ \la \mv ,\rho \ra}. \ee  
It follows from \eqref{prin:ind} that every element of the (universal) principal series is equal to $f_{\varphi}$ for some $\varphi \in M(\tG).$ 
Let $\sph$ be the function on $\tG$ defined as follows 
\be{def:sph,iw} \sph(\zeta a_{\O} u \zeta  \pi^{\mv} k) &=& \begin{cases} 0 & \text{ if } \mv \neq 0 \\ \iota(\zeta) & \text{ if } \mv = 0 \end{cases} \ee 
where $\zeta \in \bmu, u \in U, a_{\O} \in \tA_{\O}, k \in K$ and $\zeta \in \bmu.$ Recall also that we have defined the function $\tve_w \, (w \in W)$ in \eqref{vew}; one sees that $\sph = \sum_{w \in W} \tve_w$. Using this notation, set 
\be{sph-vects} \begin{array}{lcr} f_K:= f_{\sph}& \text{ and } & f_w:= f_{\tve_w}. \end{array} \ee
The element $f_K$ is often called a \emph{spherical vector} in the principal series representation. 
 
 For a fixed character $\psi$ as above, a Whittaker \emph{functional} will be a map $L: M(\tG) \rr \C$ satisfying \be{whit-fun} L(n^-.\varphi) = L(\varphi) \psi(n^-) \text{ for } n^- \in U^-,\ \varphi \in M(\tG). \ee From such a functional and a choice of $\varphi$ we obtain a corresponding function $L(\varphi)(g):= L(g. \varphi).$ Note the difference with the transformation properties of our functions $\mc{W}$ on $\tG$ as described in \eqref{W-inv}-- we will see below the precise relation between these two functions.
 
  For each $\xv \in \Lv$ we construct a functional via the following integral 
  \be{L:xv} L_{\xv}( \varphi ) = q^{\la \xv,\rho\ra}\int_{U^-} \varphi( \pi^{\xv} u^- ) \psi(u^-)^{-1} du^- \text{ for } \varphi \in M(\tG), \ee 
  where $du^-$ is a suitably normalized Haar measure on $U^-.$ Note that there exists an action of $\C[\Lv_0]$ (and actually some localizations of this ring) on the Whittaker functionals: 
  \be{act-on-fun} (e^{\mv} L_{\xv}) ( \varphi ) = L_{\xv} ( e^{-\mv} \varphi) = L_{\mv + \xv} (\varphi) \text{ for } \mv \in \Lv_0. \ee 
For $\varphi \in M(\tG)$ and $f_{\varphi}$ as in \eqref{map:real} we define 
\be{L-all} \mc{L}(f_{\varphi})(g) &=& \int_{U^-} f_{\varphi}(u^- g ) \psi(u^-)^{-1} du^- 
  = \sum_{\mv \in \Lv} e^{\mv} q^{ \la \mv, \rho \ra} \int_{U^-} \varphi(\pi^{\mv} u^- g ) \psi(u^-)^{-1} \\
  &=& \sum_{\mv \in \Lv}  L_{\mv}(\varphi)(g) e^{\mv} .\ee  
  
   In this language, one defines the \emph{spherical Whittaker function} as $\mc{L}(f_K)(g)$ 
   and the Iwahori-Whittaker functions as $\mc{L}(f_w)(g).$ 
   The usual argument shows that $\mc{L}(f_K)(g)$ 
   is determined by its values at $g=\pi^{-\lv} \, (\lv \in \Lv_+).$ Note again that the switch between dominant in Proposition \ref{p:W-inv} and antidominant here.

   Finally we want to explain the connection between  $\mc{L}(f_K)(\pi^{-\lv})$ 
   and $\mc{L}(f_w)(\pi^{-\lv})$. First, for a suitable choice of Haar  measure on $U^-$ we can arrange that 
   \be{avg-L} \Av_{\psi, \lv}(\varphi)(\pi^{\mv}) =q^{ -\la \mv ,\rho \ra} L_{\mv} ({\varphi}) (\pi^{-\lv}) \text{ for } \varphi \in M(\tG, \tI^-). \ee 
   Note incidentally that our definition of the averaging operator in \eqref{avg:2} was only for elements from $M(\tG, \tI^-),$ but we can use the above to extend it all of $M(\tG)$ (though we do not need this here).  From \eqref{Psi-whit}, we have $\W_{w, \lv} = \Psi(\Av_{\psi, \lv}(\tve_w)),$ and so using \eqref{avg-L} above, we find  
   \be{W:w-L} \W_{w, \lv} &=&   \sum_{\mv} L_{\mv}(\tve_w)(\pi^{-\lv}) e^{\mv}  = \mc{L}(f_w)(\pi^{-\lv}). \ee 

\tpoint{Connection to the work of Kazhdan-Patterson} \label{s:kaz-pat} In \cite{ka:pat}, Kazhdan and Patterson computed the effect of composing the intertwiner $I_w \, (w \in W)$ defined in \eqref{I_w} with the Whittaker functionals $L_{\mv}$ introduced in the previous paragraph. They expressed their answer as an equality of functionals on $M(\tG)$ \be{tau} L_{\mv} \circ I_w = \sum_{\xv \in \Lv} \tau^w({\mv, \xv}) \, L_{\xv} \ee where $\tau^w(\mv, \xv)$ is some rational function in the variables $e^{\lv} \, (\lv \in \Lv_0)$ and the action of $e^{\lv}$ on the functional is as in \eqref{act-on-fun}. If $a \in \Pi$ the formula above takes on the simple form (see \cite[Theorem 13.1 and p.26]{mac-met}\footnote{Note that one must take $\av$ to $-\av$ as well as renormalize the intertwining operators from the formulas of \cite{mac-met} to match the conventions of this paper, see Remark \ref{rmk:matchupwPMcN}}), \be{KP-sim} L_{\mv} \circ I_a = \tau^{w_a}(\mv, \mv) L_{\mv} + \tau^{w_a}(\mv, w_a \mv - \av) L_{w_a \mv - \av} \ee for some explicit rational functions $\tau^{w_a}(\mv, \mv)$ and $\tau^{w_a}(\mv, w_a \mv - \av).$

 Let us see how from the proof of Theorem \ref{main-rec}, we obtain \eqref{KP-sim} in the special case when this equality of functionals is applied to $\tve_w$ and then evaluated at $\pi^{-\lv} \, (\lv \in \Lv_+).$ In the process, we obtain a formula for the functions $\tau^{w_a}(\mv, w_a \mv - \av)$ and $\tau^{w_a}(\mv, \mv).$

\emph{Notation:} For any series $F = \sum_{\xv} c_{\xv} e^{\xv},$ let us write $[e^{\mv}] F = c_{\mv}.$

\noindent If we write $\Psi(\Av_{\psi, \lv}(\tve_w)) = \sum_{\xv} a_{\xv} e^{\xv}$, then from (\ref{W:w-L}) we have \be{L-coeff} [e^{\mv}]  \Psi(\Av_{\psi, \lv}(\tve_w)) = a_{\mv} = L_{\mv}( \tve_w) (\pi^{-\lv})  .\ee  From the proof of Theorem \ref{main-rec} (see \eqref{w':a}, \eqref{rec:1}, \eqref{p-rec:5} and \eqref{Psi-whit}), we obtain \be{p-rec:5-1} \Psi( \Av_{\psi, \lv}(I_a(\tve_w))) = \c(\av) w_a \star \Psi(\Av_{\psi, \lv}(\tve_w)).\ee  Using \eqref{avg-L}, we obtain \be{Av: L} [e^{\mv}] \Psi( \Av_{\lv, \psi}(I_a(\tve_w))) = L_{\mv}(I_a(\tve_w))(\pi^{-\lv}).\ee To connect with \eqref{KP-sim} we now need to analyze $[e^{\mv}] \c(\av) w_a \star \Psi(\Av_{\psi, \lv}(\tve_w)).$ Using the formula \eqref{c:act}, this amounts to studying \be{ots:1}  \underbrace{[e^{\mv}] \sum_{\xv} a_{\xv} \frac{1-q^{-1}}{1 - e^{\m(\av) \av}} e^{w_a \xv} \, e^{( \resi_{\m(\av)} \la \xv, a \ra)  \av} }_{(I)} \ \ + \ \   \underbrace{ [e^{\mv}] \sum_{\xv} a_{\xv}  \,  q^{-1} \gf_{(1 + \la \xv, a \ra) \Q(\av)} e^{w_a \xv - \av}}_{(II)} . \ee 

\begin{description}
 
%
\item[(I)] We reparametrize this sum by the involution $\xv \mapsto w_a \xv + (\resi_{\m(\av)} \la \xv, a \ra) \av$. Then 
\be{I:re} (I) = [e^{\mv}] \sum_{\xv} a_{w_a \xv + (\resi_{\m(\av)} \la \xv, a \ra) \av } \frac{1-q^{-1}}{1 - e^{\m(\av) \av}} e^{\xv}. \ee 
If we write $w_a \xv +(\resi_{\m(\av)} \la \xv, a \ra) \av = \xv -  \la \xv, a \ra + ( \resi_{\m(\av)} \la \xv, a \ra  ) \av,$ and notice 
$$- \la \xv, a \ra \av + (\resi_{\m(\av)} \la \xv, a \ra)  \av \in \Lv_0,$$ we see that (I) is equal to \be{I:re2} & & [e^{\mv}]  \frac{1-q^{-1}}{1 - e^{\m(\av) \av}} \sum_{\xv} a_{\xv - \la \xv, a \ra \av + \left( \resi_{\m(\av)} \la \xv, a \ra \right) \av } e^{\xv} \\ &=&  \frac{1-q^{-1}}{1 - e^{- \m(\av) \av}} e^{- \la \mv, a \ra \av + \left(\resi_{\m(\av)} \la \mv, a \ra \right) \av}  L_{\mv} (\tve_w) (\pi^{-\lv}) \ee  



\item[(II)] Reindexing $(II)$ using the map $\xv \mapsto w_a \xv - \av,$ we find 
 \be{II;1}  (II) =  a_{w_a \mv - \av }  \,  q^{-1} \gf_{(-1 - \la \mv , a \ra) \Q(\av)} = q^{-1} \gf_{(-\Q(\av) - \B( \mv , \av)) } L_{w_a \mv - \av} (\tve_w) (\pi^{-\lv}).  
\ee

%
\end{description} 

In sum, if we define  
\be{C'D'} \begin{array}{lcl} \tau^{w_a}(\mv, \mv) :=  \frac{1-q^{-1}}{1 - e^{-\m(\av) \av}} e^{- \la \mv, a \ra \av +( \resi_{\m(\av)} \la \mv, a \ra)  \av}  & \text{ and } \\  
\tau^{w_a}(\mv, w_a \mv - \av) := q^{-1} \gf_{(-\Q(\av) - \B( \mv , \av) )}, \end{array} \ee 
we obtain (cf. \ref{KP-sim} or \cite[p.26]{mac-met}) that  \be{fin:kp} L_{\mv} ( I_a (\tve_w) ) (\pi^{-\lv}) = \tau^{w_a}(\mv, \mv) \, L_{\mv} (\tve_w)(\pi^{-\lv}) + \tau^{w_a}(\mv, w_a \mv - \av) \, L_{w_a \mv - \av} (\tve_w)(\pi^{-\lv}). \ee

\begin{nrem}\label{rmk:matchupwPMcN}
To compare \eqref{C'D'} with the explicit formulas for $\tau_{a,b}^1$ and $\tau_{a,b}^2$ in \cite[Theorem 13.1]{mac-met}, note that for any integer $k$ and $\m=\m(\av)$ we have 
$k+\resi _{\m}(-k) = \left\lceil \frac{k}{\m}\right\rceil\cdot \m.$
Then take $\av$ to $-\av$ on the right-hand side of both equations in \eqref{C'D'} to get (using here \eqref{w-inv})
\be{C'D'var} \tau^{(1)}= \frac{1-q^{-1}}{1 - e^{\m(\av) \av}} e^{ \left\lceil \frac{\B( \mv , \av)}{\m(\av) \Q(\av)}\right\rceil \m(\av ) \av}   \text{ and } \tau^{(2)}= q^{-1} \gf_{(-\Q(\av) + \B( \mv , \av) )}. \ee 
Now writing $x_{\alpha }^{n_{\alpha }}$ for $e^{\m(\av)\av}$ and multiplying by $\frac{1-x_{\alpha }^{n_{\alpha }}}{1-q^{-1}x_{\alpha }^{n_{\alpha }}}$ gives the same formulas as in \emph{loc. cit}.
\end{nrem}

%

\appendix

\section{Spherical Functions} \label{app-sph}

\newcommand{\s}{\mathbb{S}}
\newcommand{\Js}{\mathbb{J}}
\newcommand{\Ts}{\Ts}
\newcommand{\mk}{m_{\lv, \tK}}
\newcommand{\mum}{m_{\lv, U^-}}

The aim of this appendix is to sketch a proof of the computation of the metaplectic spherical function.  We only provide an outline of the argument here; the details are very similar to \cite[\S 7]{bkp}. In a forthcoming paper the affine, metaplectic situation will be treated in more detail.

\spoint \label{note:app} Let $v$ be a formal parameter and set $\C_v=\C[v, v^{-1}].$ We consider the group algebra over this ring $\C_v[\Lv],$ which admits a map $\C_v[\Lv] \rr \C[\Lv]$ by sending $v \mapsto q^{-1},$ where $q$ is the size of our residue field. We call the image of an element under this map its specialization at $v=q^{-1}.$ The natural action of $W$ on $\Lv$ induces an action of $W$ on $\C_v(\Lv)$ (the ring of rational functions on $\Lv$), which we denote by $f \mapsto f^w$ where $f \in \C_v(\Lv), w \in W$. Let $\C(\Lv)[W]$ be the twisted group algebra over this ring: i.e. if $f, g \in \C_v(\Lv)$ and $w, w' \in W$ then $f [w] g [w'] = f g^w [w w'].$

\spoint For any $\lv \in \Lv_+$ let \be{m:lv} m_{\lv, \tK}: \bmu \tA_{\O} U \tK \times_{\tK} \tK \pi^{\lv} \tK \rr \tG \ee be the map induced from multiplication. For each $\mv \in \Lv$ we may define a map $\z: m_{\lv, K}^{-1}(\pi^{\mv}) \rr \bmu$ by projection onto the $\bmu$ component from the first factor (one can check the process is well-defined since $\tK \cap \bmu = \tK \cap \tA \cap \bmu =1$). We then define the spherical function\footnote{More precisely, this is the image of the Satake transform of the characteristic function $K \pi^{\lv} K$  for metaplectic groups, but we do not develop the framework to make this statement precise here.}   (which can be shown to lie in $\C[\Lv]$),
  \be{spy:def} \s(\pi^{\lv}) := q^{- \la 2 \rho, \lv \ra} \sum_{\mv \in \Lv} e^{\mv} q^{ \la \rho, \lv \ra}\sum_{ x \in m_{\lv, \tK}^{-1}(\pi^{\mv} )}  \iota(\z(x)) , \ee where we recall that we fixed an embedding $\iota: \bmu \hookrightarrow \C^*.$ 
  
\begin{nrems} Starting instead with the map $\mum: \bmu \tA_{\O} U \tK \times_{\tK} \tK \pi^{\lv} U^- \rr \tG,$ we may also consider projections \be{projs} \begin{array}{lcr} \z: \mum^{-1}(\pi^{\mv}) \rr \bmu & \text{ and } & \nn: \mum^{-1}(\pi^{\mv}) \rr U^-_{\O} \setminus U^-. \end{array} \ee In this terminology this Whittaker function (introduced earlier in \eqref{W-cov}; see also \eqref{W:dec}) is written as \be{Whit:m} \W(\pi^{\lv}) = q^{ - \la 2 \rho, \lv \ra} \sum_{\mv \in \Lv} e^{\mv} q^{ \la \rho, \mv \ra} \sum_{ x \in \mum^{-1}(\pi^{\mv})} \iota(\z(x)) \psi(\nn(x)). \ee

\end{nrems}  
  
\spoint  To state our main result concerning $\s(\pi^{\lv})$ we introduce some further notation.  For each $\lv \in \Lv_+$ let $W_{\lv} \subset W$ denote its stabilizer; it is also a Coxeter group generated by the simple reflections it contains.  For each subset $\Sigma \subset W$ we define its Poincare polynomial, \be{Sig-poin} \Sigma(v^{-1}) := \sum_{w \in \Sigma} v^{- \ell(w)}. \ee  Recall also the definition of $\Delta_v$ and $\Delta$ from \eqref{del:v}; for simplicity we sometimes write \be{Gam} \Gamma:= \frac{\Delta_v}{\Delta}= \prod_{a \in R_+} \frac{1 - v e^{ - \m(\av) \av}}{1-e^{- \m(\av)\av} } , \ee and also denote, in an abuse of notation, its specialization at $v=q^{-1}$ by the same notation.

\begin{nthm} \label{sph-fla} For each $\lv \in \Lv_+$ the expression $\frac{v^{ \la \rho, \lv \ra} }{W_{\lv}(v)} \sum_{w \in W} \Gamma^w e^{w \lv} \in \C_v[\Lv].$ Its specialization at $v=q^{-1}$ is equal to $\s(\pi^{\lv})$, i.e. we may write \be{s:spec} \s(\pi^{\lv}) = \frac{q^{ - \la \rho ,\lv \ra }}{W_{\lv}(q^{-1})} \sum_{w \in W} \Gamma^w e^{w \lv}, \ee  
\end{nthm}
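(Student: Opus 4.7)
The strategy parallels the Whittaker argument of Sections~\ref{avg-sec}--\ref{rec-pf}: decompose $\s(\pi^{\lv})$ into Iwahori pieces indexed by $W$, express each piece via a Demazure-Lusztig type operator, and symmetrize. Two simplifications arise because we pair against the constant function on $\tK$ rather than against the Whittaker character $\psi$: Gauss sums are replaced by plain integrals of Hilbert symbols over $\O^{*}$, and consequently the relevant operators are built from the ordinary $W$-action on $\C_v[\Lv]$ rather than from the Chinta-Gunnells action. The new feature is the appearance of the weighted Poincar\'e polynomial $W_{\lv}(q^{-1})$ as a denominator.

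First, using $\tK = \sqcup_{w \in W} \tI^- \dot{w} \tI$ (cf.~\eqref{K:im}), I would split the fiber $m_{\lv, \tK}^{-1}(\pi^{\mv})$ according to the Bruhat cell of its first factor, yielding Iwahori-spherical pieces $\s_{w, \lv}$ defined analogously to $\W_{w, \lv}$ but with $\tK$ replacing $U^-$ on the right and without the character $\psi$. This gives
\[
\s(\pi^{\lv}) \;=\; \frac{q^{-\la 2\rho, \lv\ra}}{W_{\lv}(q^{-1})} \sum_{w \in W} \s_{w, \lv}.
\]
The denominator $W_{\lv}(q^{-1})$ records the fact that, for $w \in W_{\lv}$, distinct Bruhat cells in $\tK$ land in overlapping right $\tK$-orbits inside $\tK\pi^{\lv}\tK$, with relative weights $q^{-\ell(w)}$ coming from Iwahori volumes.

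Next, running the averaging argument of Section~\ref{rec-pf} with $\Av_{\psi, \lv}$ replaced by its right $\tK$-invariant counterpart, I would establish the recursion
\[
\widehat{\T}_a(\s_{w', \lv}) \;=\; \s_{w_a w', \lv} \quad \text{whenever } \ell(w_a w') = \ell(w') + 1,
\]
where $\widehat{\T}_a(e^{\lv}) := \c(\av) e^{w_a \lv} + \b(\av) e^{\lv}$ is the ordinary-$W$-action analogue of \eqref{T:c-b}. The key input is the spherical analogue of Lemma~\ref{I_a:spw}: the rank-one integrand no longer involves $\psi$, so the Gauss sum $\gf_k$ is replaced by $\int_{\O^{*}} (u,\pi)^{-k\Q(\av)}\,du$, which equals $1-q^{-1}$ if $\m(\av)\mid k$ and vanishes otherwise; this kills precisely the terms that would have produced the Chinta-Gunnells twist. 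An induction together with the base case $\s_{1, \lv} = q^{\la \rho, \lv\ra} e^{\lv}$ then yields $\s_{w, \lv} = q^{\la \rho, \lv\ra} \widehat{\T}_w(e^{\lv})$.

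Finally, summing over $w$ and invoking the purely formal symmetrizer identity
\[
\sum_{w \in W} \widehat{\T}_w(e^{\lv}) \;=\; \sum_{w \in W} \Gamma^w e^{w \lv} \qquad (\lv \in \Lv_+),
\]
which is the non-metaplectic Macdonald-type identity (provable by the same kind of induction as Theorem~\ref{cgp-main}, but simpler since the action is the ordinary Weyl action), and specializing $v = q^{-1}$ yields \eqref{s:spec}. The main obstacle will be the careful book-keeping in Step~1: producing the weighted polynomial $W_{\lv}(q^{-1})$ rather than $|W_{\lv}|$ requires measuring Iwahori cell volumes inside $\tK\pi^{\lv}\tK$, and this is the one place where genuine geometry (rather than the uniform algebraic recursion) enters the argument.
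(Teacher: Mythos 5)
Your overall outline (decompose into Iwahori pieces, run a Demazure--Lusztig recursion built from the ordinary Weyl action, then symmetrize) is indeed the paper's strategy, but two of your concrete steps fail. The most clear-cut problem is that you use the wrong operator: your $\widehat{\T}_a(e^{\lv})=\c(\av)e^{w_a\lv}+\b(\av)e^{\lv}$ keeps the Whittaker-type function $\c(\av)=\frac{1-ve^{-\m(\av)\av}}{1-e^{\m(\av)\av}}$ of \eqref{c:b}, whereas the spherical recursion holds for the operators of \eqref{T-sph}, whose coefficient of $[w_a]$ is $\frac{1-ve^{\m(\av)\av}}{1-e^{\m(\av)\av}}$ (note the sign of the exponent in the numerator). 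Consequently your ``purely formal symmetrizer identity'' $\sum_{w\in W}\widehat{\T}_w(e^{\lv})=\sum_{w\in W}\Gamma^w e^{w\lv}$ is false: already in rank one the coefficient of $e^{w_a\lv}$ on the left is $\c(\av)=\frac{1-ve^{-\m(\av)\av}}{1-e^{\m(\av)\av}}$, while on the right it is $\Gamma^{w_a}=\frac{1-ve^{\m(\av)\av}}{1-e^{\m(\av)\av}}$. The Whittaker-type operators symmetrize to the alternating Casselman--Shalika-type expression (the ordinary-action analogue of \eqref{P:cs}), not to the Macdonald sum; the spherical and Whittaker answers are genuinely different, so your recursion, your base case and your symmetrizer identity cannot all hold simultaneously. (Even for the correct operators, the symmetrizer identity \eqref{mac-for} only comes for free up to a $W$-invariant factor $\mathbb{m}$, which the paper pins to $1$ using the long word, or the $\lv=0$ case together with Macdonald's identity \eqref{zero-case}; your sketch glosses over this.)

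The second gap is your Step 1. Defining pieces $\s_{w,\lv}$ for \emph{every} $w\in W$ by cutting into Iwahori cells and then dividing the total by $W_{\lv}(q^{-1})$ is not a decomposition of the fibers of $m_{\lv,\tK}$, and no Iwahori ``volume bookkeeping'' will produce that denominator: each piece is a weighted point count as in \eqref{formal-w}, so at $\lv=0$ (where $\s(\pi^0)=1$ and $W_{\lv}=W$) the left-hand side of your claimed identity is a positive integer, while the right-hand side would have to be $W(q^{-1})$, which is not an integer; the identity already fails there. What the paper actually does is different: the geometric decomposition is an honest bijection with no multiplicities and no denominator, indexed by the \emph{minimal-length} coset representatives $W^{\lv}$, obtained by fibering over the parahoric-type set $\tilde{P}=\sqcup_{w\in W_{\lv}}\tI^- w\tI^-$ rather than over $\tI^-$ (Lemma \ref{w-dec:sph}, giving \eqref{s-J}); the recursion (Proposition \ref{sph-rec}) is proved only for $w\in W^{\lv}$; and the Poincar\'e polynomial enters purely algebraically, via $\mathbb{T}_w(e^{\lv})=v^{\ell(w)}e^{\lv}$ for $w\in W_{\lv}$ \eqref{T-stab}, which converts $\sum_{w\in W^{\lv}}\mathbb{T}_w(e^{\lv})$ into $\frac{1}{W_{\lv}(v)}\sum_{w\in W}\mathbb{T}_w(e^{\lv})$, see \eqref{sph-T:1}--\eqref{P:stab1}. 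So the point you flagged as ``the main obstacle'' is precisely where the needed idea is missing, and even granting your decomposition, the operator you feed into the final step would not yield \eqref{s:spec}.
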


To prove the Theorem, we follow the same pattern as in our computation of the Whittaker function, which in turn was based on \cite[\S7.2-7.3]{bkp}: first, we decompose $\s(\pi^{\lv})$ into a sum over Iwahori-pieces; then, we show that each Iwahori piece admits a description in terms of certain Demazure-Lusztig type operators; and finally, we conclude the theorem by applying a combinatorial identity for the sum of such Demazure-Lusztig operators. 

\tpoint{Remark} \label{zero-case-rmk} If $\lv=0$ then it is easy to see from the definition \eqref{spy:def} that $\s(\pi^0)=1.$ Hence, from Theorem \ref{sph-fla}, the right-hand side of \eqref{s:spec} must be equal to $1$ (at $v=q^{-1}$). This in turn reduces to a combinatorial identity  \be{zero-case} W(q^{-1}) = \sum_{w \in W} \Gamma^w \ee which was proven independently by Macdonald \cite[Theorem 2.8]{mac:poin}. In fact \eqref{zero-case} holds for any finite Coxeter group (e.g. $W_{\lv}$). On the other hand, the naive analogue of \eqref{zero-case} is known to fail for affine root systems (see \cite{mac:for}), and its failure is precisley captured by a factor stemming from 'constant term conjecture' of Macdonald (Cherednik's Theorem). 

\newcommand{\cs}{\mathbb{c}}
\newcommand{\bs}{\mathbb{b}}
\renewcommand{\Ts}{\mathbb{T}}
\renewcommand{\Ps}{\mathbb{P}}
\newcommand{\ms}{\mathbb{m}}

\spoint We begin by introducing the precise Demazure-Lusztig operators which will be needed in the sequel. For each $a \in \Pi,$ we define elements in $\C_v(\Lv)$  
\be{c:b_sph} \begin{array}{lcr} \cs(\av):= \frac{1 - v e^{ \m(\av) \av}}{1-e^{ \m(\av) \av}} & \text{ and } & \bs(\av):= \frac{v - 1}{1- e^{ \m(\av) \av}} \end{array}. \ee  Using these, we define the 'spherical' Demazure-Lusztig operators as the following elements in $\C_v(\Lv)[W]$   \be{T-sph} \Ts_a =  \cs(\av) [w_a] +  \bs(\av) [1]  \ee One has the following facts,
\begin{enumerate}
\item The $\Ts_a$ satisfy the braid relations; this follows from \cite[(8.3)]{lus-K}. Hence we may define elements $\Ts_w$ unambiguously using any reduced decomposition for $w.$

\item The $\Ts_a$ satisfy the following Hecke relations \be{hec-rel} (\Ts_a+1)(\Ts_a - v) =0 \text{ for } a \in \Pi, \ee as one may check by a direct calculation.

\item Let $\Ps:= \sum_{w \in W} \Ts_w,$ the spherical symmetrizer. Then we have the identity in $\C(\Lv)[W]$ \be{mac-for} \Ps:= \ms \sum_{w \in W} \Gamma^w [w],\ee where $\ms \in \C(\Lv)$ is some $W$-invariant factor. This follows as in \cite[Proposition 7.3.12]{bkp} (the argument there is essentially due to Cherednik), and the the proof only uses the previous two facts. In particular, it does not use the existence of a long word in $W.$ If one uses the long word, one can immediately verify that ${\mathbb{m}}=1$ (the fact that ${\mathbb{m}}=1$ can also be deduced using the identity \eqref{zero-case} as we indicate at the end of \S \ref{conc}  below).

\item Using the standard action of $W$ on $\Lv$ we obtain an action of $\Ts_a: \C_v(\Lv) \rr \C_v(\Lv)$. In fact, $\Ts_a: \C_v[\Lv] \rr \C_v[\Lv].$ Moreover, if $\lv \in \Lv_+$ and $w \in W_{\lv}$ one verifies that \be{T-stab} \Ts_w (e^{\lv}) = v^{\ell(w)} e^{\lv}. \ee 

\end{enumerate}

\newcommand{\mwl}{m_{w, \lv}}

\newcommand{\tP}{\tilde{P}}

\spoint For each $\lv \in \Lv_+,$ let us define \be{P:lv} \tP:= \sqcup_{w \in W_{\lv} } \tI^- w \tI^- .\ee We may consider the maps \be{mw} m_{w,\lv}: \bmu \tA_{\O} U w \tP \times_{\tP} \tP \pi^{\lv} \tK \rr \tG. \ee For each $\mv \in \Lv$ we may again define a projection \be{proj:sph-iw} \z_w: \mwl^{-1}(\pi^{\mv}) \rr \bmu. \ee The natural inclusion fits into a commutative diagram  \be{mw:mK} \xymatrix{\mwl^{-1}(\pi^{\mv}) \ar@{^{(}->}[rr] \ar[dr]^{\z_w} & & \ar[dl]_{\z} \mk^{-1}(\pi^{\mv})  \\ & \bmu & } \ee which is compatible with the projections $\z_w$ and $\z.$

\begin{nlem} \label{w-dec:sph} Let $\lv \in \Lv$ Let $W^{\lv}$ denote a fixed set of minimal length representatives for $W/W_{\lv}.$ For each $\mv \in \Lv$ there is a bijection (induced by the inclusions \eqref{mw:mK} above) \be{bij-fib} \sqcup_{w \in W^{\lv} } m_{w, \lv}^{-1}(\pi^{\mv}) \rr m_{\lv, \tK}^{-1}(\pi^{\mv}) \ee   \end{nlem}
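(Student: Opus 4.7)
The strategy is to decompose $m_{\lv, \tK}^{-1}(\pi^{\mv})$ into pieces indexed by $W^{\lv}$ via a well-defined $W^{\lv}$-valued invariant, and then to identify each piece with $m_{w, \lv}^{-1}(\pi^{\mv})$. The starting point is to refine the Iwahori decomposition of $\tK$ from \eqref{K:im} relative to $\tP$: grouping the cells $\tI^- \dot v \tI^-$ according to the coset $v W_{\lv} \in W/W_{\lv}$, and using the minimal-length property $\ell(wu) = \ell(w) + \ell(u)$ for $w \in W^{\lv}, u \in W_{\lv}$ combined with the standard identity $\tI^- \dot{w} \tI^- \cdot \tI^- \dot{u} \tI^- = \tI^- \dot{w}\dot{u} \tI^-$, one obtains
\[
\tK = \bigsqcup_{w \in W^{\lv}} \tI^- \dot w \tP.
\]

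Given $x \in m_{\lv, \tK}^{-1}(\pi^{\mv})$, choose a representative $(a, b)$ with $ab = \pi^{\mv}$. Using the Iwasawa form \eqref{iwa-met} together with the fibered product equivalence, I would normalize so that $a \in \bmu \tA_{\O} U$; then $b = a^{-1} \pi^{\mv} \in \tK \pi^{\lv} \tK$, and one may write $b = \tilde k \pi^{\lv} k'$ with $\tilde k, k' \in \tK$. The decomposition above places $\tilde k \in \tI^- \dot w \tP$ for a unique $w = w(x) \in W^{\lv}$, and independence of this assignment from all choices must be checked: the ambiguity in $a$ amounts to right-multiplication by elements of $\tA_{\O} U(\O) \subset \tI^-$, which does not alter the $\tI^- \dot w \tP$-cell, while the ambiguity in $k'$ only affects $\tilde k$ up to right-multiplication by $\tP$. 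Writing $\tilde k = i \dot w p$ with $i \in \tI^-, p \in \tP$ and applying the fibered product equivalence with $g = (i \dot w)^{-1}$ passes to the representative $(a \cdot i \dot w,\, p \pi^{\lv} k')$. The second factor is in $\tP \pi^{\lv} \tK$; the first lies in $\bmu \tA_{\O} U \dot w \tP$, using the explicit form $\tI^- = \tA_{\O} U^-(\O) U(\pi\O)$ and the fact that conjugation by $\dot w^{-1}$ sends each root subgroup $x_{\pm\alpha}(\O)$ or $x_{\pm\alpha}(\pi\O)$ into another such subgroup of $\tI^- \subset \tP$. This produces a representative of an element of $m_{w, \lv}^{-1}(\pi^{\mv})$ mapping to $x$, yielding surjectivity onto the $w(x)$-fiber.

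For injectivity of each inclusion $m_{w, \lv}^{-1}(\pi^{\mv}) \hookrightarrow m_{\lv, \tK}^{-1}(\pi^{\mv})$, suppose two pairs $(a_1, b_1), (a_2, b_2)$ with $a_i \in \bmu \tA_{\O} U \dot w \tP$, $b_i \in \tP \pi^{\lv} \tK$ are $\tK$-equivalent via some $g \in \tK$, so $a_2 = a_1 g$. Then $a_1 g \in \bmu \tA_{\O} U \dot w \tP$, and by the disjointness of $\tK = \sqcup_{w' \in W^{\lv}} \tI^- \dot{w'} \tP$ (combined with the Iwasawa uniqueness on the $\bmu \tA_{\O} U$-part) one forces $g \in \tP$; thus the two pairs were already $\tP$-equivalent. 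Disjointness of the images of $m_{w,\lv}^{-1}(\pi^{\mv})$ for different $w \in W^{\lv}$ follows from the well-definedness of $w(x)$ in the previous step.

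The main obstacle is the careful bookkeeping in the second step: ensuring both that the Iwasawa normalization of $a$ does not disturb the $W^{\lv}$-invariant, and that the factor absorption $(a, i\dot w p \pi^{\lv} k') \sim (a i \dot w, p \pi^{\lv} k')$ indeed lands in $\bmu \tA_{\O} U \dot w \tP \times_{\tP} \tP \pi^{\lv} \tK$. Both reduce to the explicit decomposition of $\tI^-$ and its behavior under $\dot w$-conjugation, and closely parallel the nonmetaplectic argument of \cite[\S 7]{bkp}; the metaplectic factor $\bmu$ sits centrally and passes through the coset combinatorics without modification, contributing only via the character $\iota \circ \z$ in \eqref{spy:def}.
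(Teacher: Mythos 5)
Your overall architecture (split $\tK=\sqcup_{w\in W^{\lv}}\tI^-\dot w\tP$, extract a $W^{\lv}$-valued invariant from a representative, then check surjectivity and injectivity piecewise) is the right one, but each of your three key verifications rests on a false group-theoretic claim, so as written the proof has genuine gaps. First, $U(\O)\not\subset\tI^-$ (one only has $\tI^-\cap U=U(\pi\O)$), so the Iwasawa ambiguity $\tA_{\O}U(\O)$ does \emph{not} preserve the cells $\tI^-\dot w\tP$, and your invariant $w(x)$ is in fact ill-defined: in $SL_2$ with $\lv$ regular dominant and $u\in\O^*$, the pairs $(x_a(u),\,x_a(-u)\pi^{\lv})$ and $(1,\pi^{\lv})$ represent the same element of $m_{\lv,\tK}^{-1}(\pi^{\lv})$, yet the first forces $\tilde k=x_a(-u)\in\tI^-\dot w_a\tI^-$ while the second gives $\tilde k=1\in\tI^-$. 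Second, in the surjectivity step the passage to $(a\,i\,\dot w,\,p\pi^{\lv}k')$ does not land where you claim: the $U^-(\O)$-factor of $i\in\tI^-$ conjugates under $\dot w^{-1}$ to $x_{w^{-1}\beta}(\O)$ with $\beta<0$, and for every $w\in W^{\lv}$ with $w\neq 1$ there is such a $\beta$ with $w^{-1}\beta>0$ and $\la\lv,w^{-1}\beta\ra>0$ (a minimal-length representative has no inversions in the root system of $W_{\lv}$), so this factor lies neither in $\tI^-$ nor in $\tP$; your blanket statement that $\dot w^{-1}$-conjugation keeps the root subgroups inside $\tI^-\subset\tP$ is false. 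Third, your injectivity argument never uses the second components: from $a_1,\,a_1g\in\bmu\tA_{\O}U\dot w\tP$ alone one cannot conclude $g\in\tP$ (take $w=1$, $a_1=1$, $g=x_\alpha(u)$ with $u\in\O^*$ and $\la\lv,\alpha\ra>0$).

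All three defects are cured by relocating the invariant and by using dominance of $\lv$ through the containment $\tK\cap\pi^{\lv}\tK\pi^{-\lv}\subseteq\tP$, which is also the unproved fact hiding behind your assertion that the ambiguity in $k'$ only moves $\tilde k$ by $\tP$ on the right (this is where $\lv\in\Lv_+$ and the definition of $W_{\lv}$ enter; the Lemma is false without dominance). Concretely: first prove the disjoint decomposition $\bmu\tA_{\O}U\tK=\sqcup_{w\in W^{\lv}}\bmu\tA_{\O}U\dot w\tP$ (coverage follows from your $\tK=\sqcup_w\tI^-\dot w\tP$ since $\bmu\tA_{\O}U\tK=\sqcup_{v\in W}\bmu\tA_{\O}U\dot v\tI^-$; disjointness by reducing mod $\pi$ and using $B_\kappa\backslash G_\kappa/P_\kappa\cong W/W_{\lv}$). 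Then, for a class with representative $(a,b)$ and any Cartan factorization $b=k_0\pi^{\lv}k'$, define $w(x)$ as the piece containing the \emph{product} $ak_0$: changing the representative leaves $ak_0$ unchanged, and changing the factorization of $b$ moves $ak_0$ only by right multiplication by $\tK\cap\pi^{\lv}\tK\pi^{-\lv}\subseteq\tP$, so this is well defined, and $(ak_0,\pi^{\lv}k')$ is already a representative in $\bmu\tA_{\O}U\dot w\tP\times_{\tP}\tP\pi^{\lv}\tK$, so surjectivity needs no absorption lemma at all. For injectivity, if $y$ and $yg$ ($g\in\tK$) both satisfy $y^{-1}\pi^{\mv}=p\pi^{\lv}k_1$ and $(yg)^{-1}\pi^{\mv}=p'\pi^{\lv}k_1'$ with $p,p'\in\tP$, then $p'^{-1}g^{-1}p=\pi^{\lv}k_1'k_1^{-1}\pi^{-\lv}\in\tK\cap\pi^{\lv}\tK\pi^{-\lv}\subseteq\tP$, hence $g\in\tP$, and the disjointness above then forces the two indices $w$ to coincide; this is the step your argument is missing, and it necessarily uses the $b$-side constraints. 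This is the shape of the argument the paper points to in \cite[Lemma 7.3.3]{bkp}; your proposal would need these repairs before it constitutes a proof.
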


The proof of this is similar to \cite[Lemma 7.3.3]{bkp}.

\spoint For each $w \in W^{\lv},$ we set 
 \be{Jw} \Js_w(\lv):= q^{ - \la 2 \rho, \lv \ra}  \sum_{\mv \in \Lv} e^{\mv} q^{ \la \rho, \mv \ra} \sum_{x \in m_{w, \lv}^{-1}(\pi^{\mv})} \iota(\z_w(x)). \ee From definition \eqref{spy:def} and the previous Lemma \ref{w-dec:sph}, we find that \be{s-J} \s(\pi^{\lv}) = \sum_{w \in W^{\lv}} \Js_w(\lv). \ee The same techniques as in the proof of \cite[Lemma 7.3.10]{bkp} can be adapted to show
 
\begin{nprop} \label{sph-rec} If $\lv \in \Lv_+, w \in W^{\lv},$  then $\Js_w(\lv)$ is the specialization of $v^{ \la \rho, \lv \ra} \Ts_w(e^{\lv})$ at $v=q^{-1},$ i.e.  \be{simp} \Js_w(\lv) = q^{- \la \rho, \lv \ra} \Ts_w(e^{\lv}) \ee \end{nprop}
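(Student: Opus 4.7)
The strategy is to adapt the proof of Theorem~\ref{main-rec} (given in \S\ref{rec-pf}) to the spherical setting, with the simplification that the ordinary Weyl group action on $\C_v[\Lv]$ replaces the Chinta--Gunnells action. Since $\Ts_a$ is $\C_v$-linear, the proposition follows by induction on $\ell(w)$ from the base case $w=1$ together with the recursion
\[
\Ts_a(\Js_{w'}(\lv)) = \Js_w(\lv), \qquad w = w_a w',\ \ell(w)=\ell(w')+1,
\]
for consecutive minimal-length representatives in $W^{\lv}$. For the base case, I would show directly from \eqref{Jw} that $\tP\pi^{\lv}\tK = \pi^{\lv}\tK$ (using that $W_{\lv}$ stabilizes $\pi^{\lv}$ and $W_{\lv}\subset \tK$), so the image of $m_{1,\lv}$ is $\bmu\tA_{\O}U\pi^{\lv}\tK$; the Iwasawa decomposition then forces $m_{1,\lv}^{-1}(\pi^{\mv})=\emptyset$ unless $\mv=\lv$, in which case the fiber is a single point with trivial $\z$-value. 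This yields $\Js_1(\lv)=q^{-\la\rho,\lv\ra}e^{\lv}=q^{-\la\rho,\lv\ra}\Ts_1(e^{\lv})$.

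For the inductive step, I would introduce the spherical averaging operator $\Av_{\tK,\lv}\colon M(\tG,\tI^-)\to M(\tG,\tK)$ by
\[
\Av_{\tK,\lv}(\tve_x)(g) = \sum_{y \in m_{x,\lv}^{-1}(g)} \iota(\z_x(y)),
\]
for which a direct comparison of definitions (as in \eqref{Psi-whit}) yields $\Psi(\Av_{\tK,\lv}(\tve_x))=q^{\la 2\rho,\lv\ra}\Js_x(\lv)$. The next step is to apply $\Psi$ to both sides of the Fubini-type compatibility
\[
I_a(\Av_{\tK,\lv}(\tve_{w'})) = \Av_{\tK,\lv}(I_a(\tve_{w'}))
\]
and compare. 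The right-hand side is handled by Lemma~\ref{lem-int-vw}: expanding $I_a(\tve_{w'})=\tve_w+\frac{1-q^{-1}}{1-e^{\m(\av)\av}}\tve_{w'}$, averaging, and applying $\Psi$ give $q^{\la 2\rho,\lv\ra}\bigl(\Js_w(\lv)-\bs(\av)\Js_{w'}(\lv)\bigr)$ at $v=q^{-1}$.

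For the left-hand side I would first restrict to the big cell via a generic-cell reduction (as in \cite[Claim 5.6]{pat-whit}), writing $\Av^{\gen}_{\tK,\lv}(\tve_{w'}) = \sum_{\xv}c_{\xv}\hs_{\xv}$, where $\hs_{\xv}$ denotes the spherical analogue of \eqref{def-spw}, supported on $\bmu U \tA_{\O}\pi^{\xv}\tK$ and carrying no $\psi$-twist. The crux is a spherical analogue of Lemma~\ref{I_a:spw} that computes $I_a(\hs_{\xv})$ by splitting the rank-one integral into the same Iwasawa ranges as in the Whittaker case; the Gauss-sum contributions of \eqref{Gam1} vanish (no additive character is paired against), and the remaining Hilbert-symbol integrals over $\O^{*}$ collapse into a clean identity of the form $I_a(\hs_{\xv}) = \cs(\av)\circ \hs_{w_a\xv}$, where $\cs(\av)$ is expanded in powers of $e^{\m(\av)\av}$ and acts via the $\Lv_0$-action of \eqref{newact}. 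Applying $\Psi$ then yields $q^{\la 2\rho,\lv\ra}\cs(\av)\Js_{w'}(\lv)^{w_a}$, and equating the two expressions gives $\Js_w(\lv) = \cs(\av)\Js_{w'}(\lv)^{w_a}+\bs(\av)\Js_{w'}(\lv) = \Ts_a(\Js_{w'}(\lv))$, as required.

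The main obstacle is the spherical version of Lemma~\ref{I_a:spw}: without $\psi$-support to truncate the sum, one must carefully track the infinite tail of contributions from the range $k+\la\xv,a\ra\geq 0$, verify that the Hilbert-symbol integrals $\int_{\O^{*}}(r,\pi)^{-k\Q(\av)}\,dr$ reduce to $(1-q^{-1})$ exactly when $\m(\av)\mid k$ and vanish otherwise, and check that the resulting geometric series assemble into $\cs(\av)\hs_{w_a\xv}$ across all Iwahori double cosets of the big cell. Once this rank-one identity is established, the rest of the argument is a routine translation of \S\ref{rec-pf}.
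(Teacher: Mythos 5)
Your overall plan --- base case $w=1$, then the recursion $\Ts_a(\Js_{w'}(\lv))=\Js_w(\lv)$ proved by running the averaging/intertwining argument of \S\ref{rec-pf} with $\psi$ removed and $U^-$ replaced by $\tK$ --- is exactly the strategy the paper intends (it only sketches this, deferring to \cite[Lemma 7.3.10]{bkp}). But the key rank-one lemma is stated incorrectly, and as written the step fails. The correct spherical analogue of Lemma \ref{I_a:spw} is $I_a(\hs_{\xv}) = q^{\la \xv, a\ra}\,\cs(\av)\circ \hs_{w_a\xv}$, not $\cs(\av)\circ\hs_{w_a\xv}$: already for $\mathrm{SL}_2$ with $n=1$, the region $\valu(s)\ge\la\mv,a\ra$ contributing to $\mv=w_a\xv$ has measure $q^{\la\xv,a\ra}$, and the annulus contributing to $\mv=w_a\xv+m\av$ has measure $(1-q^{-1})q^{\la\xv,a\ra-m}$. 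This $\xv$-dependent factor is precisely the analogue of the coefficient $q^{-m+\la\xv,a\ra}$ in \eqref{Gam2}, i.e.\ of the prefactor $e^{-\la\lv,a\ra\av}$ acting through $\circ$ in \eqref{cg:act-2} and \eqref{Ia:cg}. With your identity, applying $\Psi$ yields $\cs(\av)\sum_{\xv}a_{\xv}q^{-\la\xv,a\ra}e^{w_a\xv}$ rather than $\cs(\av)\,\Js_{w'}(\lv)^{w_a}$ (up to the normalization $q^{\la 2\rho,\lv\ra}$), so the recursion does not follow; with the corrected identity the cancellation $q^{\la\rho,w_a\xv+m\av\ra}q^{-m+\la\xv,a\ra}=q^{\la\rho,\xv\ra}$, exactly as in \eqref{p-rec:4b}, produces the output you claim. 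So your sketch is internally inconsistent at its crux, though the repair is a bookkeeping correction rather than a change of method.

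A second point you pass over: $\Js_w(\lv)$ in \eqref{Jw} is defined via the map \eqref{mw}, fibered over the parahoric $\tP$ with left factor $\bmu\tA_{\O}Uw\tP$, whereas the averaging operator you need --- so that the Fubini identity with $I_a$ and Lemma \ref{lem-int-vw} apply (note $I_a(\tve_{w'})$ involves $\tve_{\pi^{k\m(\av)\av}w'}$, attached to affine Weyl elements and right $\tI^-$-cosets) --- must be built from the $\tI^-$-fibered map $\bmu\tA_{\O}U\dot{x}\tI^-\times_{\tI^-}\tI^-\pi^{\lv}\tK\rr\tG$. Your ``direct comparison of definitions'' giving $\Psi(\Av_{\tK,\lv}(\tve_w))=q^{\la 2\rho,\lv\ra}\Js_w(\lv)$ is exactly the point where one must check that, for $w\in W^{\lv}$, the two multiplication maps have the same weighted fiber counts over $\pi^{\mv}$; this uses the minimality of $w$ in $wW_{\lv}$ and is of the same nature as Lemma \ref{w-dec:sph}, not a formality. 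Relatedly, your base-case assertion $\tP\pi^{\lv}\tK=\pi^{\lv}\tK$ is false in general (take $x_a(\pi s)\in\tI^-\subset\tP$ with $\la\lv,a\ra\ge 2$); what is true, and suffices, is $\bmu\tA_{\O}U\tP\pi^{\lv}\tK=\bmu\tA_{\O}U\pi^{\lv}\tK$, obtained by absorbing the positive-unipotent part of $\tP$ into the left $U$ via the Iwahori factorization and using dominance of $\lv$ on the remaining $U^-(\O)\tA_{\O}$ part.
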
 

\begin{ncor} For any $\lv \in \Lv_+$ we have \label{sph:Ts} \be{sph-T} \s(\pi^{\lv}) = q^{- \la \rho, \lv \ra} \sum_{w \in W^{\lv}} \Ts_w(e^{\lv}).\ee \end{ncor}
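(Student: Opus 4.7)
The plan is to derive this corollary as an immediate consequence of the two preceding results: the decomposition \eqref{s-J} of the spherical function into ``Iwahori--spherical'' pieces $\Js_w(\lv)$ indexed by minimal-length coset representatives $w \in W^{\lv}$, and Proposition \ref{sph-rec} identifying each such piece with a Demazure--Lusztig operator applied to $e^{\lv}$. No new analytic or combinatorial input should be needed at this stage; the content of the corollary is simply that the two identifications are compatible.

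Concretely, I would first invoke the decomposition \eqref{s-J}, namely
\begin{equation*}
\s(\pi^{\lv}) \;=\; \sum_{w \in W^{\lv}} \Js_w(\lv),
\end{equation*}
which comes from the partition of fibers given by Lemma \ref{w-dec:sph}. Then I would substitute the formula $\Js_w(\lv) = q^{-\la \rho, \lv \ra}\,\Ts_w(e^{\lv})$ from Proposition \ref{sph-rec} into each summand. Pulling out the common scalar $q^{-\la \rho, \lv \ra}$ yields exactly the claimed identity
\begin{equation*}
\s(\pi^{\lv}) \;=\; q^{-\la \rho, \lv \ra}\sum_{w \in W^{\lv}} \Ts_w(e^{\lv}).
\end{equation*}

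There is essentially no obstacle to overcome here: the corollary is a formal bookkeeping step, and all difficulty is absorbed into the proofs of Lemma \ref{w-dec:sph} and Proposition \ref{sph-rec}. The only point worth a brief comment is the appearance of the sum over $W^{\lv}$ rather than all of $W$; this is consistent because $W_{\lv}$ stabilizes $\lv$, and in the subsequent application (combining this with the identity \eqref{mac-for} to deduce Theorem \ref{sph-fla}) the factor $W_{\lv}(v^{-1})$ will naturally appear from the relation $\Ts_w(e^{\lv}) = v^{\ell(w)}e^{\lv}$ for $w \in W_{\lv}$ recorded in \eqref{T-stab}. In other words, the present corollary provides the correct intermediate form adapted to passing from coset representatives to a full sum over $W$ in the next stage of the argument.
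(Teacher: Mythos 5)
Your proof is correct and matches the paper's (implicit) argument exactly: the corollary follows by substituting the identity $\Js_w(\lv) = q^{-\la \rho, \lv \ra}\,\Ts_w(e^{\lv})$ of Proposition \ref{sph-rec} into the decomposition \eqref{s-J} and factoring out the scalar. No further input is needed, as you say.
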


\spoint Using \eqref{T-stab}, we find the following expression for $\Ps(e^{\lv}),$ \be{sph-T:1}  \Ps(e^{\lv}) = \sum_{w \in W} \Ts_w(e^{\lv}) &=& \sum_{w_1 \in W^{\lv}} \ \ \sum_{w_2 \in W_{\lv}} \Ts_{w_1 w_2}(e^{\lv})  \\
&=& \sum_{w_1 \in W^{\lv}} \ \ \sum_{w_2 \in W_{\lv}} \Ts_{w_1} v^{  \ell(w_1)} (e^{\lv}) \\
&=& W_{\lv}(v) \sum_{w \in W^{\lv}} \Ts_w(e^{\lv}). \ee Hence  \be{P:stab} \label{P:stab1} \sum_{w \in W^{\lv}} \Ts_w(e^{\lv}) &= &\frac{1}{W_{\lv}(v)} \Ps(e^{\lv}) =  \ms \frac{1}{W_{\lv}(v)} \sum_{w \in W} \Gamma^w e^{ w \lv}, \ee where the last equality follows from \eqref{mac-for} (note that $\ms$ here is independent of $\lv$).

\tpoint{Conclusion of Proof of Theorem \ref{sph-fla}} \label{conc} Using the above as well as \eqref{sph-T}, we conclude that the specialization  of \eqref{P:stab} at $v=q^{-1}$ are in $\C[\Lv].$ Moreover from \eqref{sph-T} we obtain \be{sph-T:2} \s(\pi^{\lv}) &=& \frac{q^{ -\la \rho, \lv \ra} }{W_{\lv} (q^{-1})} \sum_{w \in W} \Ts_w(e^{\lv}) \\ \label{sph-T:3} &=&  \ms \frac{q^{- \la \rho, \lv \ra} }{W_{\lv}(q^{-1})} \sum_{w \in W} \Gamma^w e^{w \lv}, \ee 
where $\ms$ is now the specialization of the element with the same name at $v=q^{-1}.$ 

The proof of Theorem \ref{sph-fla} will be concluded if we can show that $\ms=1.$ We may proceed in two ways. On the one hand, we could use the long word to conclude this in the expression \eqref{mac-for}.  Alternatively, if $\lv=0$ then the left-hand side of \eqref{sph-T:2} is equal to $1$ as we noted in Remark \ref{zero-case-rmk}. Thus  
\be{sph-T:4} \ms^{-1} = \frac{1}{W(q^{-1})} \sum_{w \in W} \Gamma^w .\ee Using Macdonald's identity \eqref{zero-case}, we can conclude that $\ms=1$ 

\tpoint{Remark} \label{whit:spy} Finally, we explain the relation between the factor $\ms$ and the factor $\mf{m}$ which arose in the combinatorial identity for the sum of metaplectic Demazure-Lusztig operators (see \ref{P:cs-op}).

\begin{enumerate}
\item If we apply an approximation result as in \cite[Proposition 4.3]{pat-whit} to the expression \eqref{Whit:m} and compare this with an approximation result as in \cite[Theorem 1.9(4), Theorem 1.13]{bgkp}, we conclude that \be{whit:sph-lim} \lim_{\lv \stackrel{+}{\rr} \infty} \frac{\W(\pi^{\lv})}{e^{\lv} q^{\la \rho, \lv \ra}} =  \lim_{\lv \stackrel{+}{\rr}\infty}  \frac{\s(\pi^{\lv})}{e^{\lv} q^{\la \rho, \lv \ra}}, \ee where $\lv \stackrel{+}{\rr} \infty$ indicates that $\lv$ is made increasingly dominant while remaining regular. In fact, both limits approach the Gindikin-Karpelevic integral.

%

\item Using the following expression (derived from (\ref{sym:P}, \ref{W:dec}, \ref{W:T-1}, \ref{P:cs-op}) \be{W:n} \W(\pi^{\lv}) = q^{ - \la  \rho, \lv \ra} \mf{m} \, \Gamma \, \sum_{w \in W} (-1)^{\ell(w)} \left( \prod_{\bv \in R^{\vee}(w^{-1}) } e^{-\m(\bv) \bv} \right) w \star e^{\lv}  , \ee one can show that as $\lv \stackrel{+}{\rr} \infty$ the left-hand side of \eqref{whit:sph-lim} is equal to $q^{ - \la  \rho, \lv  \ra } \mf{m} \, \Gamma.$ 

\item Applying an argument as in the proof of \cite[Theorem 1.13]{bgkp} to \eqref{sph-T:3} we find that the right-hand side of \eqref{whit:sph-lim} is equal to $q^{- \la \rho, \lv \ra } \ms \,  \Gamma.$ 

\item From the previous steps, we conclude that $\ms = \mf{m}.$ Using \eqref{sph-T:4} we then find that \be{whit-T:3} \mf{m}^{-1} =  \frac{1}{W(q^{-1})} \sum_{w \in W} \Gamma^w. \ee   In other words,  the $W$-invariant factor $\mf{m}$ in the Casselman-Shalika formula can be computed in terms of the corresponding factor in the  spherical function, which in turn can be computed by the expression on the right-hand side of \eqref{whit-T:3}.

\end{enumerate}

\section{On the braid relations for the Chinta-Gunnells action}\label{app:braid}

\spoint In this section, we prove some statements from \S \ref{w-defs}, concerning braid relations. These were used in \S \ref{sym-fla} and \S \ref{met-cs} to deduce the metaplectic Casselman-Shalika formula from our main result Theorem \ref{main-rec}. The notation is as in \S \ref{sec-met-dl}. 

\begin{nprop} \label{well-def} Let $w \in W$ and choose any reduced decomposition $w=w_{b_1} \cdots w_{b_r}$ with $b_i \in \Pi.$ For any fixed $f \in \nice$ and $g \in \C_v[\Lv]$ the elements \be{w-T-def} \begin{array}{lcr} w_{b_1} \star \cdots \star w_{b_r} \star f & \text{ and } & T_{w_{b_1}} \cdots T_{w_{b_r}} (g) \end{array} \ee are well-defined, independent of the choice of chosen reduced decomposition for $w.$ Hence they can be denoted unambiguously as $w \star f$ and $\T_w(g).$  \end{nprop}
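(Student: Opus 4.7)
The plan is to reduce both statements to rank two root systems, where the braid relations can be verified via a combination of Theorem \ref{main-rec} (applied to dominant coweights) and a reduction to the classical non-metaplectic setting.

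Since any braid relation in $W$ involves only two simple reflections, and all the ingredients appearing in the statement of the proposition (the rational functions $\b$, $\c$, the Chinta-Gunnells formula \eqref{cg-act}, the DL operators $\T_a$) restrict cleanly to the rank two subsystem generated by the relevant pair, I will assume throughout that $\Phi$ is of rank two, i.e.\ of type $A_1\times A_1$, $A_2$, $B_2$, or $G_2$. In rank two, part (b) of the proposition applied to a dominant coweight $\lv \in \Lv_+$ follows at once from Corollary \ref{W:T}: we have $\T_w(e^\lv) = q^{-\la\rho,\lv\ra}\W_{w,\lv}$, and $\W_{w,\lv}$ is defined by the intrinsic integral formula \eqref{formal-w}, which does not refer to any reduced decomposition of $w$. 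The case $A_1\times A_1$ of part (a) is trivial since the two operators $w_a\star$, $w_b\star$ involve disjoint subalgebras.

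For part (a) in the remaining rank two types, I would proceed in two stages. First, restrict attention to $e^\lv$ with $\lv\in\Lv_0$: the Chinta-Gunnells formula \eqref{cg-act} simplifies significantly here, since $\la\lv,a\ra\equiv 0\mod \m(\av)$ forces the residue term to vanish and the Gauss sum subscript $\Q(\av)+\B(\lv,\av)$ to equal $\Q(\av)\mod n$ (and so $\gf_{\Q(\av)+\B(\lv,\av)}$ is independent of $\lv$). After a suitable change of variables (interpreting $e^{\m(\av)\av}$ as the exponential of a coroot in an auxiliary classical rank two root system) and rescaling, the operators $w_a\star$ acting on $\C_v[\Lv_0]$ reduce to classical (non-metaplectic) Weyl-group-type operators, whose braid relations are known (see \cite[(8.3)]{lus-K}). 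Second, extend from $\Lv_0$ to all of $\nice$ using the semilinearity $w_a\star(hf)=h^{w_a}(w_a\star f)$ for $h\in\nice$ of Lemma \ref{cg-basic-prop}(1): write $\ring$ as a free $\nice$-module with basis given by a set of coset representatives of $\Lv/\Lv_0$; since both sides of the claimed braid identity are $\nice$-semilinear in the input (with the linear action twisted by the $W$-action on $\nice$), the statement for a general element reduces to a finite check on the coset representatives. Finally, once part (a) is established, part (b) for arbitrary (not necessarily dominant) coweights follows by expanding $\T_{b_1}\cdots\T_{b_r}$ inside the semidirect product algebra $\ring\rtimes_\star W$, applying the now-established braid relations for $\star$, and invoking Claim \ref{T:pol} to ensure that the resulting expression, which a priori lies in $\ring$, actually lies in $\C_v[\Lv]$.

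The main obstacle is the identification of the restricted CG-action on $\Lv_0$ with a classical operator: in the mixed-type systems $B_2$ and $G_2$ the integer $\m(\av)$ depends on whether $\av$ is long or short, so the rescaling $e^{\m(\av)\av}\mapsto e^{\av'}$ is not uniform across coroots, and a careful matching of long- versus short-coroot contributions is required. A secondary obstacle is the finite bookkeeping in the semilinear descent from $\Lv_0$ to $\Lv$: one must enumerate the $W$-orbits on $\Lv/\Lv_0$ and verify the braid identity on each, which depends mildly on $n$ via the index $[\Lv:\Lv_0]$.
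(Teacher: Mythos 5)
Your use of Corollary \ref{W:T} to get the $\T$-braid relations on dominant coweights is exactly the paper's starting point (Remark \ref{rmk:DL-braid}), but the core of your argument for the $\star$-action has two genuine gaps. First, the proposed reduction on $\Lv_0$ does not work as stated: for $\lv \in \Lv_0$ the formula \eqref{cg-act} gives $w_a \star e^{\lv} = \frac{e^{w_a\lv}}{1-ve^{-\m(\av)\av}}\bigl[(1-v) - v\,\gf_{\Q(\av)}\,e^{(\m(\av)-1)\av}(1-e^{-\m(\av)\av})\bigr]$, whose second term lies in the coset $-\av + \Lv_0$, so the operator does not even preserve the span of $\{e^{\lv}: \lv\in\Lv_0\}$, and the coefficient $\gf_{\Q(\av)}$ is a genuine Gauss sum (not $-1$ unless $n \mid \Q(\av)$); hence after the rescaling $e^{\m(\av)\av}\mapsto e^{\av}$ you do not obtain a classical operator whose braid relations you can simply cite from \cite{lus-K}. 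Second, and more seriously, the ``finite check on coset representatives of $\Lv/\Lv_0$'' that your semilinearity step leaves over is precisely the hard content of the Chinta--Gunnells braid relations: on a nontrivial coset the residue term and the varying Gauss sums $\gf_{\Q(\av)+\B(\cdot,\av)}$ enter in full force, and you give no method for verifying the identity there. The paper avoids exactly this by a different mechanism: it proves a weak-to-strong lemma (Lemma \ref{weak->strong}: validity on $\Lv_+$, which meets every coset of $\Lv_0$, implies validity on all of $\Lv$), expands $\T_L$ and $\T_R$ in a twisted group algebra over rational functions in $e^{\m(\gv)\gv}$, matches \emph{all} coefficients with the non-metaplectic Demazure--Lusztig coefficients via the rank-two lattice isomorphism $\widetilde\Lambda^{\vee}\cong\Lv$ of Proposition \ref{prop-rk2} (Lemma \ref{fg:n}), so that $\T_L-\T_R$ is a single rational multiple of $\tw_{L,\star}-\tw_{R,\star}$, and only then cancels against the $p$-adic input on dominant coweights to deduce the $\star$-braid relation.

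A related gap appears at the end: your claim that part (b) for arbitrary coweights follows from part (a) ``by expanding in $\ring\rtimes_\star W$'' is incomplete. Knowing that the $\star$-operators satisfy the braid relations does not force the products $\T_{b_1}\cdots\T_{b_r}$ to agree, any more than the Coxeter relations in $W$ force the non-metaplectic Demazure--Lusztig operators to satisfy braid relations (that is Lusztig's computation, not a formal consequence). What is needed is precisely the coefficient-matching of Lemma \ref{fg:n}: once one knows the lower-order coefficients of $\T_L$ and $\T_R$ coincide and the top coefficients agree, equality of the operators follows from the $\star$-braid relation. Without that step, your deduction of (b) for non-dominant coweights does not close.
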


 The remainder of this section is devoted to the proof of this statement and is based on three facts: (a) the group-theoretical result Theorem \ref{main-rec} and especially Remark \ref{rmk:DL-braid}; (b) a non-metaplectic analogue of the Proposition as stated in \S \ref{non-met}; and (c) a claim about rank two root systems. 
 

%


\spoint We begin with the following simple observation,

\begin{nlem} \label{weak->strong} Let $w \in W$ and choose any reduced decomposition $w=w_{b_1} \cdots w_{b_r}$ with $b_i \in \Pi.$ If \be{w:1} w_{b_1} \star \cdots \star w_{b_r} (e^{\mv}), \, \text{ for any } \mv \in \Lv_+ \ee is well defined, independent of the choice of reduced decomposition, then the same fact holds for any $\mv \in \Lv.$ \end{nlem}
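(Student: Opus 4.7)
The strategy is to reduce the general case to the hypothesized dominant case by factoring out an element of $\nice$. Given $\mv \in \Lv$, I would first choose $\nv \in \Lv_0 \cap \Lv_+$ dominant enough that $\mv + \nv \in \Lv_+$. Such $\nv$ exists because $\Lv_0$ is a finite-index sublattice of $\Lv$: by \eqref{w-inv} the defining condition in \eqref{Lv0} reads $\la \lv, a \ra \equiv 0 \bmod \m(\av)$ for all $a \in \Pi$, cutting out a subgroup of finite index, so $\Lv_0$ contains arbitrarily dominant coweights. With this choice one writes
\begin{equation*}
e^{\mv} = e^{-\nv} \cdot e^{\mv + \nv} \in \ring,
\end{equation*}
where $e^{-\nv} \in \C_v[\Lv_0] \subset \nice$ and $\mv + \nv \in \Lv_+$.

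Two technical ingredients are needed. First, Lemma \ref{cg-basic-prop}(1), originally stated for $f \in \C_v[\Lv]$, extends to $f \in \ring$: writing $f = g/s$ with $g \in \C_v[\Lv]$ and $s \in \mult$, the definition \eqref{cg-act-quot} together with the original lemma applied to $hg$ yields $w_a \star (hf) = h^{w_a}(w_a \star f)$ for any $h \in \nice$ and $f \in \ring$. Second, the sublattice $\Lv_0$ is $W$-stable: the condition defining $\Lv_0$ in \eqref{Lv0} extends by $\zee$-linearity of $\B(\lv,\cdot)$ from simple coroots to all of $R^{\vee}$, so that for $\lv \in \Lv_0$ one has $\B(w\lv, \av) = \B(\lv, w^{-1}\av) \equiv 0 \bmod n$ for every $a \in \Pi$, i.e.\ $w\lv \in \Lv_0$.

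Given these inputs, I would apply $w_{b_r}\star, w_{b_{r-1}}\star, \ldots, w_{b_1}\star$ in succession to the factorization above, pulling the scalar factor through at each step via the extended Lemma \ref{cg-basic-prop}(1). The intermediate quantities $w_{b_k} \star \cdots \star w_{b_r} \star e^{\mv+\nv}$ lie in $\ring$ (not merely in $\C_v[\Lv]$), which is precisely why the extension noted above is needed. After all $r$ applications one obtains
\begin{equation*}
w_{b_1} \star \cdots \star w_{b_r} \star e^{\mv} \;=\; e^{-w\nv} \cdot \bigl( w_{b_1} \star \cdots \star w_{b_r} \star e^{\mv+\nv} \bigr),
\end{equation*}
where $-w\nv \in \Lv_0$ depends only on $w$ and $\nv$, and not on any choice of reduced decomposition of $w$. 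Since $\mv + \nv \in \Lv_+$, the factor in parentheses on the right is well-defined independently of the reduced decomposition by hypothesis, and hence so is the left-hand side. The only mild obstacle is the bookkeeping in propagating Lemma \ref{cg-basic-prop}(1) through the intermediate stages, but this is routine and presents no conceptual difficulty.
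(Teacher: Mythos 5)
Your proposal is correct and follows essentially the same route as the paper: the paper likewise picks $\xv \in \Lv_0$ with $\mv + \xv \in \Lv_+$ and uses the $\nice$-semilinearity of the $\star$-operators (Lemma \ref{cg-basic-prop}(1), extended to $\ring$ via \eqref{cg-act-quot}) to pull out a factor $e^{w\xv}$ that is independent of the reduced decomposition. The two details you spell out --- the extension of Lemma \ref{cg-basic-prop}(1) to arguments in $\ring$ and the $W$-stability of $\Lv_0$ --- are left implicit in the paper's one-line proof, so making them explicit is a welcome but not substantively different addition.
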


\begin{proof} For $\mv \in \Lv,$ choose $\xv \in \Lv_0$ such that $\mv + \xv \in \Lv_+.$ As $\xv \in \Lv_0$, Lemma \ref{cg-basic-prop}(2) implies that \be{w-g:2} w_{b_1} \star \cdots \star w_{b_r} (e^{\xv + \mv}) &=& e^{w_{b_1} \cdots w_{b_r} \xv } w_{b_1} \star \cdots \star w_{b_r} (e^{\mv}) = e^{w \xv} w_{b_1} \star \cdots \star w_{b_r} (e^{\mv}) \ee The left-hand side of \eqref{w-g:2} is well-defined by assumption,  and the Lemma follows.
\end{proof}  

\newcommand{\ty}{\mathbf{y}}
\newcommand{\tw}{\mathbf{w}}
\newcommand{\tS}{S}

\spoint \label{braid-form} For the remainder of this section, we assume $R$ is of rank 2 with $\Pi^{\vee}= \{ \av, \bv \},$ $\Lv= \zee \av \oplus \zee \bv,$ and $W$ be the corresponding Weyl group generated by simple reflections $s, t$ through $a$ and $b$ respectively. Let $m$ denote the order of $st$ in $W$ and let us denote the longest word in $W$ as \be{w_LR} w = s t s \cdots = t s t \cdots \ee where there are $m$ terms in each product above.  Let $\leq$ be the Bruhat order on $W$ and let \be{Sw} S_w= \{ w' \in W \mid w' \leq w \}. \ee By a \emph{string}, we mean a finite sequence of elements $\ty:= (s_1, \ldots, s_r)$ where each $s_i \in \{s, t, 1 \}.$ For each such string, $\ty= (s_1, \ldots, s_m)$ as above, let $\ty_{\star}$ be the corresponding element in the group \be{rk2-gp} \la s, t \ra_\star:= \la s, t \mid s^2=t^2=1\ra; \ee we denote this element by \be{ty*} \ty_\star:= s_{i_1} \star s_{i_2} \cdots  ,\ee which is obtained from $\ty$ by omitting every $s_i=1$ as well as removing every pair of adjoining elements with $s_i=s_{i+1}.$ We also denote by $\ty_o \in W$ the element obtained by multiplying together the elements in $\ty.$

\spoint Let us define two strings  \be{wLR} \begin{array}{lcr} \tw_{L}:= (s, t, s, \cdots) & \text{ and } & \tw_{R}:= (t, s, t, \cdots) \end{array} \ee where both strings have $m$ terms.  Let $\tS_L$ and $\tS_R$ denote the set of strings obtained from $\tw_L$ and $\tw_R$ respectively by replacing some of the $s$ or $t$ by $1.$ Let $\tS_{L, \star}$ (resp. $\tS_{R, \star}$) denote the set of words in $\la s, t \ra_\star$ obtained from $\tS_L$ by sending each $\ty  \mapsto \ty_\star.$ In our previous notation, $\tw_{L, \star} = s \star t \star s \cdots$ and $\tw_{R, \star} = t \star s \star t \cdots.$ Note that as sets of words in $\la s, t \ra_\star,$ \be{eq-sets} \tS_{L, \star} \setminus \{ \tw_{L, \star} \} = \tS_{R, \star} \setminus \{ \tw_{R, \star} \}. \ee As every element in $S_{w} \setminus \{ w \}$ has only one reduced decomposition, we obtain

 \begin{nclaim} \label{SL=SR} The map $\varphi: \la s, t \ra_\star \rr W$ induced by multiplication (previously denoted $\ty_\star \mapsto \ty_o$) induces a bijection $\varphi: \tS_{L, \star} \setminus \{ \tw_{L, \star} \} \rr S_{w} \setminus \{ w \}$ (and so also $\varphi: \tS_{R, \star} \setminus \{ \tw_{R, \star} \} \rr S_{w} \setminus \{ w \}$). \end{nclaim}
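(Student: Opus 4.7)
The plan is to describe $\tS_{L,\star}$ (and by symmetry $\tS_{R,\star}$) explicitly as subsets of $\la s, t \ra_\star$, and then to deduce the bijection from the classical fact that in a dihedral Coxeter group, every non-longest element has a unique reduced expression. The key technical step is a careful bookkeeping of how the $\star$-operation (which simultaneously omits every $1$ and cancels every pair of adjacent equal letters) interacts with taking subwords of the strictly alternating sequence $\tw_L$.

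First I would show that $\tS_{L,\star}$ consists exactly of those elements of $\la s, t \ra_\star$ whose reduced (alternating) form either equals $\tw_{L,\star}$ itself or has length strictly less than $m$. For the forward direction, note that for $\ty \in \tS_L$, first omitting the $1$'s yields a subword of the alternating word $\tw_L$; if every position is retained we recover $\tw_{L,\star}$, and otherwise the length drops by at least one, so after the further cancellations $s^2 = t^2 = 1$ the resulting reduced word must have length strictly less than $m$. For the reverse direction, an arbitrary alternating word of length $k<m$ starting with $s$ is realized by keeping the first $k$ positions of $\tw_L$ and replacing the rest by $1$, while one starting with $t$ is realized by keeping positions $2, 3, \ldots, k+1$ (which is possible precisely because $k+1 \le m$). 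A symmetric argument gives the analogous description for $\tS_{R,\star}$; comparing the two descriptions immediately yields \eqref{eq-sets}.

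Next I would invoke the standard fact that in the dihedral group of order $2m$ generated by $s, t$, the longest element $w$ is the unique element of length $m$ and has exactly the two reduced expressions $\tw_{L,o}$ and $\tw_{R,o}$, while every other element has a unique reduced expression (necessarily of length $<m$, and hence alternating). Consequently, distinct alternating words of length strictly less than $m$ represent distinct elements of $W$ (no braid relation is available), and every $u \in W \setminus \{w\}$ is represented by exactly one such alternating word. Combining this with the characterization of $\tS_{L,\star}$ from the previous step shows that $\varphi$ restricted to $\tS_{L,\star} \setminus \{\tw_{L,\star}\}$ is both injective and surjective onto $S_w \setminus \{w\}$, which is the desired bijection.

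I do not expect any serious obstacle here: everything reduces to the combinatorics of alternating words in a dihedral group. The only point that requires genuine care is verifying that every alternating word of length $<m$ arises from some subword selection of $\tw_L$, both when its leading letter matches that of $\tw_L$ and when it does not; this is handled by the simple shift-by-one trick described above.
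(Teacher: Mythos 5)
Your argument is correct and follows essentially the same route as the paper: you establish the identification of $\tS_{L,\star}\setminus\{\tw_{L,\star}\}$ and $\tS_{R,\star}\setminus\{\tw_{R,\star}\}$ (the paper's \eqref{eq-sets}, which you prove in more detail via the explicit description as alternating words of length $<m$) and then invoke the uniqueness of reduced decompositions for the non-longest elements of the dihedral group, exactly as the paper does. The extra bookkeeping you supply (the shift-by-one realization of words starting with the ``wrong'' letter) is a fine elaboration but not a different method.
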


\spoint By Lemma \ref{cg-basic-prop}(1), the group $\la s, t \ra_\star$ acts on $\C_{v, S}[\Lv].$ Using this representation, the first statement in Proposition \ref{well-def} is equivalent to showing,
\be{wa-braid} \tw_{L, \star} \star e^{\lv} = \tw_{R, \star} \star e^{\lv}  \text{ for  every } \lv \in \Lv, \ee and the second is equivalent to showing \be{DL-braid} \T_L (e^{\lv}) = \T_R(e^{\lv})  \text{ for every } \lv \in \Lv \ee where, setting $\T_s:= \T_{w_a}$ and  $\T_t:= \T_{w_b},$  \be{TLR} \begin{array}{lcr} \T_L:=\T_{s} \T_{t} \T_{s} \cdots   & \text{ and } & \T_R: = \T_{s} \T_{t} \T_{s} \cdots  \end{array} \ee with $m$ factors on both sides of each of the above equations. From Remark \ref{rmk:DL-braid} we know that \eqref{DL-braid} holds for each $\lv \in \Lv_+.$ We will argue that this implies \eqref{wa-braid} holds for all $\lv \in \Lv_+$ and hence, by Lemma \eqref{weak->strong} that it holds for all $\lv \in \Lv.$ In turn, this will imply \eqref{DL-braid} holds for all $\lv \in \Lv$ as well.

\newcommand{\uT}{\underline{\T}}
\newcommand{\uc}{\underline{\c}}
\newcommand{\ub}{\underline{\b}}
\newcommand{\uth}{\underline{\theta}}
\renewcommand{\f}{\mathsf{f}}
\renewcommand{\g}{\mathsf{g}}

\newcommand{\twist}{\C_v(\Lv_0)[\la s, t \ra_\star]}
\renewcommand{\gv}{\gamma^{\vee}}
\newcommand{\tLv}{\widetilde{\Lambda}^{\vee}}
\newcommand{\tnice}{V}


\spoint  Let $\tLv$ be the lattice spanned by $\m(\av)\av$ and $\m(\bv)\bv.$ Note that $\tLv \subset \Lv_0,$ and that $\b(\gv)$ and $\c(\gv),$ defined as in \eqref{c:b} for $\gv \in R^{\vee},$ lie in the ring $V:= \nicer \cap \nice.$ Define a twisted multiplication on $V[\la s, t \ra_{\star}]$ via \be{twist-V} f [w] \star g [w'] = f g^w [w \star w'] \text{ where } f, g \in V, w, w' \in \la s, t \ra_\star \ee and $g^w$ denotes the \emph{usual} action of $w$ on $g.$


Shifting our point of view slightly, $\T_s$ and $\T_t$ (introduced in \eqref{T:c-b}) may be regarded as elements in the twisted group algebra $V[\la s, t \ra_{\star}]$. For any product $\T:= \T_{s_1} \cdots \T_{s_r}$ we may expand via \eqref{twist-V} to write \be{exp-T} \T := \sum_{\ty \in \la s, t \ra_{\star}} a_{\ty} [\ty]. \ee The application of the right-hand side of this formula to an element of $g \in \C_{v, S}[\Lv]$ (using the natural representation of $\la s, t \ra_{\star}$) agrees with $\T_{s_1} \cdots \T_{s_r}(g)$ using Lemma \ref{cg-basic-prop}(2).  

\spoint Let us now write (expanding as above) \be{T-exp} \begin{array}{lcr} \T_{L} = \sum_{\ty \in S_{L, \star}} f_{\ty} \; \;   [\ty] & \text{ and } &  \T_{R} = \sum_{\ty \in S_{R, \star}} g_{\ty} \; \;   [\ty]
\end{array} \ee where each $f_{\ty}, g_{\ty} \in \C_v(\tLv).$ Using Lemma \ref{cg-basic-prop}(2), we find that the application of the right-hand side of $\sum_{\ty \in S_{L, \star}} f_{\ty} \; \;   [\ty]$ to an element in $\C_{v, S}[\Lv]$ is the same as that of $\T_L$. The same is true for $\sum_{\ty \in S_{R, \star}} g_{\ty} \; \;   [\ty]$ and $\T_{R}.$

\begin{nlem} \label{fg:n} For any $\ty \in S_{L, \star} \setminus \{ \tw_{L, \star} \}= S_{R, \star} \setminus \{ \tw_{R, \star} \}$ we have $f_{\ty} = g_{\ty}.$ Furthermore, we also have $f_{\tw_L}= g_{\tw_R}.$  \end{nlem}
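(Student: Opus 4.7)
My plan is to reduce Lemma~\ref{fg:n} to the known braid relations for Demazure-Lusztig operators in a suitably rescaled non-metaplectic rank-two root system. The starting observation is that the rational functions $\b(\gv)$ and $\c(\gv)$ from \eqref{c:b}, though written in terms of $e^{\m(\gv)\gv}$, have formally the same shape as the non-metaplectic Demazure-Lusztig ingredients attached to the rescaled coroot $\gv':=\m(\gv)\gv$. I would accordingly work on the sublattice $\tLv = \ZZ\av' \oplus \ZZ\bv'$ and view $(\av', \bv')$ as a new rank-two root system carrying an action of $W$.

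The rank-two fact to verify is that the rescaled Cartan integers $\la\bv', a\ra_W := \m(\bv)\la\bv, a\ra/\m(\av)$ and $\la\av', b\ra_W := \m(\av)\la\av, b\ra/\m(\bv)$ really are integers. This follows from \eqref{w-inv} together with the defining property $\m(\bv)\bv \in \Lv_0$ of \S\ref{B:Q}(iii): the congruence $n \mid \B(\m(\bv)\bv, \av) = \m(\bv)\la\bv, a\ra\Q(\av)$ forces $\m(\av) \mid \m(\bv)\la\bv, a\ra$. Their product $\la\bv, a\ra\la\av, b\ra = 4\cos^2(\pi/m)$ is preserved, so $(\av', \bv')$ spans a bona fide rank-two root system of the same Weyl-group type as the original (possibly with long and short roots interchanged). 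A short case check in the four types $A_1{\times}A_1, A_2, B_2, G_2$ confirms that no pathology arises.

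With the rescaling in hand, the Demazure-Lusztig operators for the scaled system---call them $\T_s^{\mathrm{sc}}, \T_t^{\mathrm{sc}}$ to distinguish them notationally---are formally the same expressions as our $\T_s, \T_t$, differing only in the ambient twisted algebra. In the metaplectic case this is $V[\la s, t\ra_\star]$, where $\la s, t\ra_\star$ is the free product $\ZZ/2 * \ZZ/2$; in the scaled non-metaplectic case it is $\C_v(\tLv)[W]$. Because the $\la s, t\ra_\star$-action on $V$ factors through $W$, the natural map extends to a surjection of twisted algebras $\pi: V[\la s, t\ra_\star] \twoheadrightarrow \C_v(\tLv)[W]$ with $\pi(\T_L) = \T_L^{\mathrm{sc}}$ and $\pi(\T_R) = \T_R^{\mathrm{sc}}$. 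The non-metaplectic braid relations \cite[(8.3)]{lus-K} then give $\T_L^{\mathrm{sc}} = \T_R^{\mathrm{sc}}$ in $\C_v(\tLv)[W]$.

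To extract Lemma~\ref{fg:n}, I would read off coefficients on both sides of $\pi$, using Claim~\ref{SL=SR} together with the fact that every $u \in W \setminus \{w\}$ admits a unique reduced expression (the only braid relation has length $m$). The $[u]$-coefficient of $\pi(\T_L)$ is a sum of $\b,\c$-products over the strings $\ty \in S_L$ with $\ty_o = u$, all of which collapse under $\ty \mapsto \ty_\star$ to the unique preimage $\varphi^{-1}(u) \in S_{L,\star}$, so this coefficient equals the metaplectic $f_{\varphi^{-1}(u)}$. At the longest element $u = w$ only the maximal string $\tw_L$ contributes, giving $f_{\tw_L}$. The same analysis applied to $\pi(\T_R)$ returns $g_{\varphi^{-1}(u)}$ for $u \neq w$ and $g_{\tw_R}$ for $u = w$; equating via $\T_L^{\mathrm{sc}} = \T_R^{\mathrm{sc}}$ then delivers both halves of the lemma in one stroke. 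The principal obstacle is the case-by-case bookkeeping of the rescaled root system and of the $W$-action on $\tLv$, but once this setup is in place the coefficient comparison is essentially mechanical.
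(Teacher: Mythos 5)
Your proposal is correct and follows essentially the same route as the paper: rescale to the lattice spanned by $\m(\av)\av$ and $\m(\bv)\bv$, use rank-two facts to recognize a bona fide root system of the same dihedral type, invoke the non-metaplectic braid relations of \cite{lus-K}, and compare coefficients via Claim \ref{SL=SR} and uniqueness of reduced words below the long element. The only (cosmetic) difference is that you apply Lusztig's relations directly on $\tLv$ after checking integrality of the rescaled Cartan integers via \eqref{w-inv} and $\Lv_0$, whereas the paper transports everything to $\Lv$ through the explicit isomorphism of Proposition \ref{prop-rk2}, possibly composed with the swap $\sigma$ of the two simple reflections.
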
 

Assuming this Lemma (which is proved below) and Claim \ref{SL=SR}, we obtain an equality of elements of $V[\la s, t \ra_{\star}]$ \be{LR:1} \T_L - \T_R = f_{\tw_{L}} \tw_{L, \star} - g_{\tw_{R}} \tw_{R, \star} = f_{\tw_L} ( \tw_{L, \star} - \tw_{R, \star} )  \ee Now, from \eqref{LR:1} and the fact that $\T_L(e^{\lv}) = \T_R(e^{\lv})$ for $\lv \in \Lv_+$, we find that \be{weak:conc} \tw_{L, \star} (e^{\lv}) = \tw_{R, \star} (e^{\lv}) \text{  for every } \lv \in \Lv_+. \ee Using Lemma \ref{weak->strong}, we conclude that the previous statement holds for any $\lv \in \Lv$, i.e. we have obtained \eqref{wa-braid}. From \eqref{LR:1}, we now deduce \eqref{DL-braid}.

\tpoint{Proof of Lemma \ref{fg:n}, step 1}  \label{non-met} Let us first recall a non-metaplectic analogue of \eqref{DL-braid}. Let $\C_v(\Lv)[W]$ be the twisted group algebra of $W$ (i.e. $f [w] g [w'] = f g^w [w w']$ where $f, g \in \C_v(\Lv)$ and $w, w' \in W$).  Define the expressions $\uT_a \in \C_v(\Lv)[W]$ by \be{T-nor} \uT_a = \uc(\av) [w_a] + \ub(\av) [1] \ee where for any $a \in R$ we define \be{c:a:nor} \begin{array}{lcr} \uc(\av):= \frac{1 - q^{-1} e^{-\av}}{1 - e^{\av}} & \text{ and } & \ub(\av) := \frac{1 - q^{-1}}{1- e^{\av}} \end{array}. \ee Denote by $\uT_s:= \uT_{a}$ and $\uT_t:= \uT_{b}.$ Let $\uT_L$ and $\uT_R$ be defined as in \eqref{TLR} with each $\T_s, \T_t$ replaced by $\uT_s, \uT_t.$
Expanding, we may write \be{uT:exp} \begin{array}{lcr} \uT_L = \sum_{w' \leq w} \f_{w'} [w']  & \text{ and } \uT_R = \sum_{w' \leq w} \g_{w'} [w'] \end{array} \ee where each $\f_{w'}, \g_{w'}$ is a sum of products of $\uc(\gv)$'s and $\ub(\delta^{\vee})$ with $\gv, \delta^{\vee} \in R^{\vee}.$  From \cite[(8.3)]{lus-K} we find that \be{fg:1} \f_{w'} = \g_{w'} \in \C_{v, S}[\Lv] \text{ for each } w' \leq w. \ee In general the terms $\f_{w'}$ and $\g_{w'}$ are not particularly easy to compute, but one does have \be{fg:long} \f_{w_L} = \g_{w_R} = \prod_{ a > 0, w^{-1} a < 0 } \uc(\av). \ee 

\tpoint{Proof of Lemma \ref{fg:n}, step 2} Recall the bijection $\varphi$ from Claim \ref{SL=SR}. Using \eqref{twist-V} we find that $f_{\ty}$ in \eqref{T-exp} is given by the same sum of products of rational functions as in $\f_{\varphi(\ty)}$ (defined in the non-metaplectic setting \eqref{uT:exp} above) but by replacing $\uc, \ub$ with $\c, \b.$ A similar relation holds between $g_{\ty}$ and $\g_{\varphi(\ty)}.$ Thus, from \eqref{fg:long} we can immediately conclude that $f_{\tw_L} = g_{\tw_R}.$ It remains to see that $f_{\ty} = g_{\ty}$ for any $\ty \in S_{L, \star} \setminus \{ \tw_{L, \star} \}= S_{R, \star} \setminus \{ \tw_{R, \star} \}.$ 

%

\tpoint{Proof of Lemma \ref{fg:n}, step 3} We recall some facts about rank two root systems. Let $\B$ and $\Q$ be as in \eqref{B:Q}. If $\B(\av,\bv)= 0,$ (i.e. $R^{\vee} =A_1\times A_1$), then  $w_aw_b$ has order $m=2,$ and $f_{\ty}$ and $g_{\ty}$ are clearly identical for every $\ty \in \la s, t \ra_{\star}$. So we may omit this case in the discussion that follows. The next claim is verified empirically.

\begin{nclaim} Assume $R^{\vee}$ is a rank two root system with $\Pi^{\vee}=\{ \gv_1, \gv_2 \}.$ Assume $\gv_1$ is the short root, and that $\B(\gv_1, \gv_2)\neq 0.$ Then one has the following possibilities
\begin{enumerate}
\item[(i)] $\Q(\gv _2)/\Q(\gv _1)=1,$ $\B(\gv_1,\gv_2)=(-1)\cdot \Q(\gv_1),$ and $R^{\vee}$ is of type $A_2.$
\item[(ii)] $\Q(\gv _2)/\Q(\gv _1)=2,$ $\B(\gv_1,\gv_2)=(-2)\cdot \Q(\gv_1),$ and $R^{\vee}$ is of type $B_2.$
\item[(iii)] If $\Q(\gv _2)/\Q(\gv _1)=3,$ $\B(\gv_1,\gv_2)=(-3)\cdot \Q(\gv_1),$ and $R^{\vee}$ is of type $G_2.$
\end{enumerate}
\end{nclaim}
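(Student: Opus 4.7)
The plan is a direct case-by-case verification using the classification of rank two root systems. First, I would note that the hypothesis $\B(\gv_1, \gv_2) \neq 0$ combined with the $W$-invariance identity \eqref{w-inv},
\[
\B(\lv, \av) = \la \lv, a \ra \, \Q(\av), \qquad \lv \in \Lv, \, a \in R,
\]
rules out $A_1 \times A_1$: specializing to $\lv = \gv_1$ and $a = \gamma_2$ gives $\B(\gv_1, \gv_2) = \la \gv_1, \gamma_2 \ra \, \Q(\gv_2)$, but in $A_1 \times A_1$ the simple roots are orthogonal so the Cartan integer $\la \gv_1, \gamma_2 \ra$ vanishes. The remaining possibilities for $R^{\vee}$ are thus $A_2$, $B_2$, and $G_2$.

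For each of these three types I would use the same displayed identity to reduce the claim to two standard pieces of data: the Cartan integer $\la \gv_1, \gamma_2 \ra$, and the ratio $\Q(\gv_2)/\Q(\gv_1)$. For the latter, the $W$-invariance of $\Q$ forces it to be constant on each $W$-orbit in $R^{\vee}$, of which there is a single orbit in $A_2$ and two orbits (short and long coroots) in $B_2$ and $G_2$; consequently the ratio in question is the square of the ratio of coroot lengths, which by the standard classification equals $1$, $2$, and $3$ in types $A_2$, $B_2$, and $G_2$ respectively (using that $\gv_1$ is short by hypothesis). The relevant Cartan integer $\la \gv_1, \gamma_2 \ra$ equals $-1$ in each of the three cases. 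Substituting into the displayed formula yields
\[
\B(\gv_1, \gv_2) = -\Q(\gv_1), \quad -2 \, \Q(\gv_1), \quad -3 \, \Q(\gv_1)
\]
in types $A_2$, $B_2$, $G_2$ respectively, matching (i), (ii), (iii).

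The only place where care is needed --- and what I view as the main (modest) bookkeeping obstacle --- is the convention fixing which of the two simple coroots is short. Because $\gv_1$ is short by hypothesis, the corresponding simple root $\gamma_1$ is long, and so $\la \gv_1, \gamma_2 \ra$ is the "short-coroot-row" entry of the Cartan matrix (taking the common value $-1$ in all three cases) rather than $\la \gv_2, \gamma_1 \ra$, which takes the values $-1, -2, -3$. This asymmetry is precisely the reason the ratio $\Q(\gv_2)/\Q(\gv_1)$ surfaces as $-\B(\gv_1, \gv_2)/\Q(\gv_1)$ in each case; once it is taken into account, the verification is a transcription of the rank two Cartan data, consistent with the paper's own description of the result as "verified empirically."
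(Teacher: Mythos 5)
Your proposal is correct, and it is essentially the approach the paper intends: the claim is left there as ``verified empirically,'' i.e.\ a case check against the rank-two classification, which is exactly what you carry out, organized neatly through \eqref{w-inv} (the pairing $\la \gv_1, \gamma_2 \ra$ of the short simple coroot against the other simple root is indeed $-1$ in types $A_2$, $B_2$, $G_2$, while $\la \gv_2,\gamma_1\ra$ is $-1,-2,-3$). One small imprecision: constancy of $\Q$ on $W$-orbits does not by itself give the ratio $\Q(\gv_2)/\Q(\gv_1)$ in the two-orbit cases $B_2$ and $G_2$; what you actually need is that a $W$-invariant quadratic form on an irreducible rank-two reflection representation is proportional to the squared length (Schur), which is standard and is also how the paper treats $\Q$ in Proposition \ref{prop-rk2}. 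Alternatively, you can avoid lengths altogether: applying \eqref{w-inv} in both slots and using the symmetry of $\B$ gives $\la \gv_1,\gamma_2\ra\,\Q(\gv_2)=\B(\gv_1,\gv_2)=\B(\gv_2,\gv_1)=\la \gv_2,\gamma_1\ra\,\Q(\gv_1)$, so $\Q(\gv_2)/\Q(\gv_1)=\la \gv_2,\gamma_1\ra/\la \gv_1,\gamma_2\ra\in\{1,2,3\}$, and the stated values of $\B(\gv_1,\gv_2)$ follow at once.
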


\noindent Let $R^{\vee}=\{\av, \bv \}$ be our rank two root system (with $\av$ the short root) which we assume is not of type $A_1 \times A_1.$ Let $\Lv$ and $\tLv$ be the lattices spanned by $\av, \bv$ and $\m(\av)\av, \m(\bv)\bv$ respectively.

\begin{nprop} \label{prop-rk2}
There is an isomorphism $\psi: \tLv \cong \Lv$ determined by either \be{isom1}\begin{array}{lcr}  \m(\av)\av \mapsto \av,\,\  \m(\bv)\bv\mapsto \bv & \text{ or } &   
\m(\bv)\bv\mapsto  \av,\,\  \m(\av)\av\mapsto \bv. \end{array} \ee
\end{nprop}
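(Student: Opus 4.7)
Both $\tLv$ and $\Lv$ are free $\ZZ$-modules of rank two, with bases $\{\m(\av)\av,\m(\bv)\bv\}$ and $\{\av,\bv\}$ respectively. Hence, as abstract abelian groups, either of the two prescribed assignments extends to a unique $\ZZ$-linear isomorphism. The real content of the proposition must therefore be that the chosen $\psi$ also intertwines the natural $W$-actions on $\tLv$ (inherited from the action of $W$ on $\Lv$) and on $\Lv$, possibly after swapping the two simple reflections; this is the structural output needed in \S\ref{non-met} to import the non-metaplectic braid identities into the metaplectic computation on $\tLv$.

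To decide which of the two formulas works, I would begin by computing the ratio $\m(\av)/\m(\bv)$. From the preceding Claim we have $\Q(\bv)=d\Q(\av)$ for some $d\in\{1,2,3\}$. Writing $g=\gcd(n,\Q(\av))$, $n=gn_1$ and $\Q(\av)=gq_1$ with $\gcd(n_1,q_1)=1$, a direct calculation gives $\gcd(n,d\Q(\av))=g\gcd(n_1,d)$, and therefore
\[\m(\av)/\m(\bv) \;=\; \gcd(n_1,d) \;\in\; \{1,d\}.\]
This dichotomy drives the case analysis. In Case 1, when $\m(\av)=\m(\bv)=:\m$, we have $\tLv=\m\Lv$, and the first formula $\psi_1$ is nothing but multiplication by $\m^{-1}$; it is manifestly a $W$-equivariant isomorphism.

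In Case 2, where $\m(\av)=d\m(\bv)$ (which forces the root system to be of type $B_2$ or $G_2$, since $d=1$ lands in Case 1), the second formula $\psi_2$ is the correct one. I would verify that $\psi_2$ intertwines $w_a$ on $\tLv$ with $w_b$ on $\Lv$ (and $w_b$ on $\tLv$ with $w_a$ on $\Lv$) by a direct computation on the two basis vectors of $\tLv$, using the values $\la \bv,a\ra=-d$ and $\la \av,b\ra=-1$ supplied by the preceding Claim. The key identity that makes this cohere is $\m(\av)\av=d\,\m(\bv)\av$: it guarantees that $w_a(\m(\bv)\bv)=\m(\bv)\bv+d\,\m(\bv)\av=\m(\bv)\bv+\m(\av)\av$ already lies in $\tLv$ and maps under $\psi_2$ to $\av+\bv=w_b(\av)$, and the three other pairings are checked in the same style.

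The main obstacle is precisely Case 2: each of the individual verifications is short, but one must organize them so as to make transparent that exactly the \emph{swap} of simple reflections (and nothing more exotic) is the correct intertwining, and that the arithmetic hypothesis $\m(\av)=d\,\m(\bv)$ is both what permits $\tLv$ to be $W$-stable with the swap, and what forces the choice of $\psi_2$ over $\psi_1$. All other steps are routine arithmetic with $\gcd$ together with the explicit Cartan-type numerics from the rank two Claim.
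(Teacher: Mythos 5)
Your proposal is correct and follows essentially the same route as the paper: the same dichotomy according to whether $\m(\av)=\m(\bv)$ or $\m(\av)/\m(\bv)=\Q(\bv)/\Q(\av)$, with the first assignment handling the former case and the swap handling the latter. The only real difference is in the final certification: where the paper checks the swap by computing the length ratio $\Q(\m(\bv)\bv)/\Q(\m(\av)\av)=\Q(\av)/\Q(\bv)$ for the proportional generators, you instead verify directly that it intertwines the simple reflections using the Cartan integers $\la \bv, a\ra=-d$ and $\la \av, b\ra=-1$ --- an equivalent check, and your explicit $\gcd$ derivation of $\m(\av)/\m(\bv)\in\{1,d\}$ usefully fills in the paper's ``we can easily see.''
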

\begin{proof} As $\Lv, \tLv$ are two lattices in the same Euclidean space whose generators are proportional to one another, to verify they are isomorphic one just needs to consider the ratio of their lengths. Equivalently, we can consider the ratio of the function $\Q$ on the basis vectors.

If $\m(\av)=\m(\bv),$ the first map above gives an isomorphism, so assume this is not the case. In the trichotomy of the Claim above, we are then in case (ii) or (iii). Using the definition \eqref{ma:def} and the fact that $\Q(\bv)/\Q(\av) \in \{ 2, 3 \}$, we can easily see: if $\m(\av) \neq \m(\bv),$ then $\m(\av)/\m(\bv) = \Q(\bv)/ \Q(\av).$ Hence \be{QB_quot}\frac{\Q(\m(\bv)\bv)}{\Q(\m(\av)\av)}=\frac{\Q(\bv)}{\Q(\av)}\cdot \frac{\m(\bv)^2}{\m(\av)^2} = \frac{\Q(\av)}{\Q(\bv)},\ee and so the second map in \eqref{isom1} results in an isomorphism.    \end{proof}

\tpoint{Proof of Lemma \ref{fg:n}, step 4} Finally we finish the proof of the Lemma. Let $\sigma: \la s, t \ra_{\star} \rr \la s, t \ra_{\star}$ be the map which interchanges $s$ and $t.$ The previous Proposition \ref{prop-rk2} induces an isomorphism which we also denote by $\psi: V \rr \C_{v, S}[\Lv] .$ Together with the bijection $\varphi$ from Claim \ref{SL=SR}, or with $\varphi\circ \sigma ,$ $\psi$ extends to a map of vector spaces $V[\la s, t \ra_{\star}]\rightarrow \C_{v, S}[\Lv][W]$ mapping $\{\T_s,\T_t\}$ to $\{\uT_a,\uT_b\}$ which is injective on the subspace $V[ \la s, t \ra_{\star}\setminus\{\tw_{L,\star},\tw_{R,\star}\}].$ Under this map $f_{\ty} \mapsto \f_{\varphi(\ty)}$ or $f_{\ty} \mapsto \g_{\varphi(\sigma(\ty))}.$ In the former case, we also have $g_{\ty} \mapsto \g_{\varphi(\ty)},$ and in the latter case $g_{\ty} \mapsto \f_{\varphi(\sigma(\ty))}.$ The Lemma now follows from \eqref{fg:1}.

\begin{bibsection}
\begin{biblist}

\bib{bgkp}{article}{
   author={Braverman, A.},
   author={Garland, H.},
   author={Kazhdan, D.},
   author={Patnaik, M.},
   title={An affine Gindikin-Karpelevich formula},
   conference={
      title={Perspectives in representation theory},
   },
   book={
      series={Contemp. Math.},
      volume={610},
      publisher={Amer. Math. Soc., Providence, RI},
   },
   date={2014},
   pages={43--64},
   review={\MR{3220625}},
   doi={10.1090/conm/610/12193},
}

\bib{bkp}{article}{
   author={Braverman, A.},
   author={Kazhdan, D.},
   author={Patnaik, M. M.},
   title={Iwahori-Hecke algebras for $p$-adic loop groups},
   journal={Invent. Math.},
   volume={204},
   date={2016},
   number={2},
   pages={347--442},
   issn={0020-9910},
   review={\MR{3489701}},
   doi={10.1007/s00222-015-0612-x},
}

\bib{bbf-annals}{article}{
    author={Brubaker, B.},
    author={Bump, D.},
    author={Friedberg, S.},   
     TITLE = {Weyl group multiple {D}irichlet series, {E}isenstein series
              and crystal bases},
   JOURNAL = {Ann. of Math. (2)},
  FJOURNAL = {Annals of Mathematics. Second Series},
    VOLUME = {173},
      YEAR = {2011},
    NUMBER = {2},
     PAGES = {1081--1120},
      ISSN = {0003-486X},
     CODEN = {ANMAAH},
   MRCLASS = {11F03 (17B37)},
  MRNUMBER = {2776371},
       DOI = {10.4007/annals.2011.173.2.13},
       URL = {http://dx.doi.org/10.4007/annals.2011.173.2.13},
}

\bib{bbl}{article}{
  title={Whittaker functions and Demazure operators},
  author={Brubaker, B.},
  author={Bump, D.},
  author={Licata, A.},
  journal={Journal of Number Theory},
  volume={146},
  pages={41--68},
  year={2015},
  publisher={Elsevier}
}

\bib{del-bry}{article}{
   author={Brylinski, J.-L.},
   author={Deligne, P.},
   title={Central extensions of reductive groups by $\bold K_2$},
   journal={Publ. Math. Inst. Hautes \'Etudes Sci.},
   number={94},
   date={2001},
   pages={5--85},
   issn={0073-8301},
   review={\MR{1896177 (2004a:20049)}},
   doi={10.1007/s10240-001-8192-2},
}

\bib{ch:gu}{article}{
   author={Chinta, G.},
   author={Gunnells, P. E.},
   title={Constructing Weyl group multiple Dirichlet series},
   journal={J. Amer. Math. Soc.},
   volume={23},
   date={2010},
   number={1},
   pages={189--215},
   issn={0894-0347},
   review={\MR{2552251 (2011g:11100)}},
   doi={10.1090/S0894-0347-09-00641-9},
}

\bib{cgp}{article}{
   author={Chinta, G.},
   author={Gunnells, P. E.},
   author={Pusk\'{a}s , A.}
   title={Metaplectic Demazure operators and Whittaker functions},
   journal={ arXiv:1408.5394}
   }

\bib{ch:of}{article}{
   author={Chinta, G.},
   author={Offen, O.},
   title={A metaplectic Casselman-Shalika formula for ${\rm GL}_r$},
   journal={Amer. J. Math.},
   volume={135},
   date={2013},
   number={2},
   pages={403--441},
   issn={0002-9327},
   review={\MR{3038716}},
   doi={10.1353/ajm.2013.0013},
}

\bib{gg}{article}{
	author={Gan, W. T.},
	author={Gao, F.}
	title={The Langlands-Weissman Program for Brylinski-Deligne extensions}
	journal={arXiv:1409.4039}
	}

\bib{ka:pat}{article}{
   author={Kazhdan, D. A.},
   author={Patterson, S. J.},
   title={Metaplectic forms},
   journal={Inst. Hautes \'Etudes Sci. Publ. Math.},
   number={59},
   date={1984},
   pages={35--142},
   issn={0073-8301},
   review={\MR{743816 (85g:22033)}},
}

\bib{lus-K}{article}{
   author={Lusztig, G.},
   title={Equivariant $K$-theory and representations of Hecke algebras},
   journal={Proc. Amer. Math. Soc.},
   volume={94},
   date={1985},
   number={2},
   pages={337--342},
   issn={0002-9939},
   review={\MR{784189 (88f:22054a)}},
   doi={10.2307/2045401},
}

\bib{mac:aff}{book}{
   author={Macdonald, I. G.},
   title={Affine Hecke algebras and orthogonal polynomials},
   series={Cambridge Tracts in Mathematics},
   volume={157},
   publisher={Cambridge University Press, Cambridge},
   date={2003},
   pages={x+175},
   isbn={0-521-82472-9},
   review={\MR{1976581 (2005b:33021)}},
   doi={10.1017/CBO9780511542824},
}

\bib{mac:for}{article}{
   author={Macdonald, I. G.},
   title={A formal identity for affine root systems},
   conference={
      title={Lie groups and symmetric spaces},
   },
   book={
      series={Amer. Math. Soc. Transl. Ser. 2},
      volume={210},
      publisher={Amer. Math. Soc., Providence, RI},
   },
   date={2003},
   pages={195--211},
   review={\MR{2018362 (2005c:33012)}},
}


\bib{mac:poin}{article}{
   author={Macdonald, I. G.},
   title={The Poincar\'e series of a Coxeter group},
   journal={Math. Ann.},
   volume={199},
   date={1972},
   pages={161--174},
   issn={0025-5831},
   review={\MR{0322069 (48 \#433)}},
}

\bib{mat}{article}{
   author={Matsumoto, H.},
   title={Sur les sous-groupes arithm\'{e}tiques des groupes semi-simples
   d\'eploy\'es},
   language={French},
   journal={Ann. Sci. \'{E}cole Norm. Sup. (4)},
   volume={2},
   date={1969},
   pages={1--62},
   issn={0012-9593},
   review={\MR{0240214 (39 \#1566)}},
}

\bib{mac-cryst}{article}{
  title={Metaplectic Whittaker functions and crystal bases},
  author={McNamara, P. J.},
  journal={Duke Mathematical Journal},
  volume={156},
  number={1},
  pages={1--31},
  year={2011},
  publisher={Duke University Press}
}

\bib{mac-prin}{article}{
   author={McNamara, P. J.},
   title={Principal series representations of metaplectic groups over local fields},
   conference={ title={Multiple Dirichlet series, L-functions and automorphic forms}, },
   book={
      series={Progr. Math.},
      volume={300},
      publisher={Birkh\"auser/Springer, New York},
   },
   date={2012},
   pages={299--327}
}

\bib{mac-met}{article}{
   author={McNamara, P. J.},
   title={The metaplectic Casselman-Shalika formula},
   journal={Trans. Amer. Math. Soc.},
   volume={368},
   date={2016},
   number={4},
   pages={2913--2937},
   issn={0002-9947},
   review={\MR{3449262}},
   doi={10.1090/tran/6597},
}

\bib{pat-whit}{article}{
   author={Patnaik, M. M.}
   title={Unramified Whittaker Functions on $p$-adic Loop Groups},
   journal={American Journal of Mathematics}
   volume={139},
   date={2017},
   number={1},
   pages={175--213},
   doi={10.1353/ajm.2017.0003},
   }

%


\bib{pus-th}{article}{
  title={Whittaker functions on metaplectic covers of GL(r)},
  author={Pusk{\'a}s, A.},
  journal={ arXiv:1605.05400},
  year={2016}
}

\bib{ram}{book}{
   author={Ramakrishnan, D.},
   author={Valenza, R. J.},
   title={Fourier analysis on number fields},
   series={Graduate Texts in Mathematics},
   volume={186},
   publisher={Springer-Verlag, New York},
   date={1999},
   pages={xxii+350},
   isbn={0-387-98436-4},
   review={\MR{1680912 (2000d:11002)}},
   doi={10.1007/978-1-4757-3085-2},
}

\bib{savin}{article}{
   author={Savin, G.},
   title={On unramified representations of covering groups},
   journal={J. Reine Angew. Math.},
   volume={566},
   date={2004},
   pages={111--134},
   issn={0075-4102},
   review={\MR{2039325 (2005a:22014)}},
   doi={10.1515/crll.2004.001},
}

\bib{ser:lf}{book}{
   author={Serre, J.-P.},
   title={Corps locaux},
   language={French},
   series={Publications de l'Institut de Math\'{e}matique de l'Universit\'{e} de
   Nancago, VIII},
   publisher={Actualit\'es Sci. Indust., No. 1296. Hermann, Paris},
   date={1962},
   pages={243},
   review={\MR{0150130 (27 \#133)}},
}

\bib{steinberg}{book}{
   author={Steinberg, R.},
   title={Lectures on Chevalley groups},
   note={Notes prepared by John Faulkner and Robert Wilson},
   publisher={Yale University, New Haven, Conn.},
   date={1968},
   pages={iii+277},
   review={\MR{0466335 (57 \#6215)}},
}
	
\bib{tok}{article}{
   author={Tokuyama, T.},
   title={A generating function of strict Gel\cprime fand patterns and some
   formulas on characters of general linear groups},
   journal={J. Math. Soc. Japan},
   volume={40},
   date={1988},
   number={4},
   pages={671--685},
   issn={0025-5645},
   review={\MR{959093 (89m:22019)}},
   doi={10.2969/jmsj/04040671},
}

\bib{weis}{article}{
   author={Weissman, M. H.},
   title={Split metaplectic groups and their L-groups},
   journal={J. Reine Angew. Math.},
   volume={696},
   date={2014},
   pages={89--141},
   issn={0075-4102},
   review={\MR{3276164}},
   doi={10.1515/crelle-2012-0111},
}

\end{biblist}
\end{bibsection}

\end{document}